\setlist[description,1]{leftmargin=1.0em,labelindent=0em}%*}
\renewcommand{\emph}[1]{{\it #1}}
\newtheorem{thm}{Theorem}
\newtheorem{lem}[thm]{Lemma}
\newtheorem{cor}[thm]{Corollary}
\newtheorem*{ack}{Acknowledgement}
\newcommand{\mR}{{\mathbb R}}
\newcommand{\mT}{{\mathbb T}}
\newcommand{\trsp}{{\mathsf{trsp}}}
\newcommand{\Deltainv}{\Delta^{\!\scriptstyle{-1}}}
\newcommand{\vu}{{\mathbf u}}
\newcommand{\vV}{{\mathbf V}}
\newcommand{\vValpnodiv}{\vV_\alpha^{\textnormal{\tiny no-div}}}
\newcommand{\vb}{{\mathbf b}}
\newcommand{\vB}{{\mathbf B}}
\newcommand{\vez}{{\mathbf e_z}}
\newcommand{\eqdef}{\mathrel{:=}}
\newcommand{\fr}[1]{\frac1{#1}}
\newcommand{\uh}{{\mathrm u}_{\mathsf h}}
\newcommand{\bh}{{\mathrm b}_{\mathsf h}}
\newcommand{\wh}{{\mathrm w}_{\mathsf h}}
\newcommand{\vw}{{\mathbf w}}
\newcommand{\uhlim}{{\bar{\mathrm u}}_{\mathsf h}}
\newcommand{\bhlim}{{\bar{\mathrm b}}_{\mathsf h}}
\newcommand{\rlim}{\bar{r}}
\newcommand{\uvlim}{{\bar{u}}_3}
\newcommand{\bvlim}{{\bar{b}}_3}
\newcommand{\vVlim}{\overline{\vV}}
\newcommand{\uvcor}{u_3^{\mathsf{(cor)}}}
\newcommand{\rcor}{r^{\mathsf{(cor)}}}
\newcommand{\ep}{\varepsilon}
\newcommand{\epA}{\ep_{\textnormal{\tiny A}}}
\newcommand{\epM}{\ep_{\textnormal{\tiny M}}}
\newcommand{\pa}{\partial}
\newcommand{\vA}{\mathsf{A}}
\newcommand{\grad}{\nabla}%{\textnormal{\,grad\,}}
\newcommand{\pt}{\pa_t}
\newcommand{\pz}{\pa_z}
\newcommand{\ddt}{\frac{d}{dt}}
\newcommand{\cLM}{\mathcal{L}_\textnormal{\tiny M}}
\newcommand{\cLA}{\mathcal{L}_\textnormal{\tiny A}}
\newcommand{\cPdh}{\mathcal{P}^\textnormal{div}_{\mathsf h}}
\newcommand{\eval}[1]{{\big|_{#1}}}
\DeclareMathOperator{\az}{{\mathfrak a}_{\mathrm z}}
\DeclareMathOperator{\avv}{{av\!}}
\newcommand{\nab}{\nabla}
\newcommand{\nabh}{\nabla_{\mathsf h}}
\newcommand{\nc}{\nabla {\cdot} }
\newcommand{\nch}{\nabh{\cdot}}
\newcommand{\cn}{ \!\cdot\! \nabla}
\newcommand{\cnh}{ \!\cdot\! \nabh}
\newcommand{\boldzero}{\mathbf{0}}
\newcommand{\Al}{Alfv\'en }
\begin{document}
\title{Convergence Rate Estimates for the Low Mach and Alfv\'en Number Three-Scale Singular Limit of Compressible Ideal
  Magnetohydrodynamics}%\thanks{...}% At most 5 thanks
    \author{Bin Cheng\thanks{{Department of Mathematics, University of
          Surrey, Guildford, GU2 7XH, United Kingdom}}
      \and Qiangchang Ju\thanks{Institute of Applied Physics and Computational Mathematics, P.O. Box 8009, Beijing 100088, China}
      \and Steve Schochet\thanks{School of Mathematical Sciences, Tel-Aviv University, Tel Aviv 69978, Israel}
    }
\maketitle
\begin{abstract}  Convergence rate estimates are obtained for singular limits of the compressible ideal magnetohydrodynamics equations, in which the Mach and
  Alfv\'en numbers tend to zero at different rates. The proofs use a detailed analysis of exact and approximate fast, intermediate, and slow modes
  together with improved estimates for the solutions and their time derivatives, and the time-integration method.  When the small parameters are
  related by a power law the convergence rates are positive powers of the Mach number, with the power varying depending on the component and the norm.
  Exceptionally, the convergence rate for two components involve the ratio of the two parameters, and that rate is proven to be sharp via corrector
  terms. Moreover, the convergence rates for the case of a power-law relation between the small parameters tend to the two-scale convergence rate as
  the power tends to one. These results demonstrate that the issue of convergence rates for three-scale singular limits, which was not addressed in
  the authors' previous paper, is much more complicated than for the classical two-scale singular limits.
\end{abstract}
\begin{flushleft}
AMS Subject Classification: 35B25, 35L45, 35Q35, 76W05
\\
Keywords: rate of convergence, singular limit, magnetohydrodynamics
\end{flushleft}

\section{Introduction}

A uniform existence theorem and a convergence theorem as the small parameters tend to zero were recently developed \cite{CJS17} for singular limits of
symmetric hyperbolic systems of
the form
\begin{equation}
\label{eq:general}
A_0(\epM \vV)\,\vV_t+\sum_{i=1}^d A_i(\vV)\,\vV_{x_i}=\fr{\epA}\cLA\vV+\fr{\epM}\cLM \vV,
\end{equation}
where $\epA$ and $\epM$ are small positive parameters and $\cLA$ and $\cLM$ are skew-adjoint constant-coefficient first-order differential operators.
If $\frac{\epA}{\epM}$ tends to zero as the parameters tend to zero then
systems of the form \eqref{eq:general} have three time scales: $O(\fr\epA)$, $O(\fr\epM)$, and $O(1)$. 

In this paper we begin the study of the rate of convergence of solutions of three-scale singular limits to corresponding solutions of their limit
equations, an issue that was not considered in~\cite{CJS17},
but which is significant for applications because it determines the accuracy of using the limiting dynamics to approximate the original system.
The convergence rate in the general case will undoubtedly be very complicated, since in
general many different limit systems are obtained for different power-law relations between the two small parameters as they both tend to zero
\cite[\S 4]{CJS17}.  As a first step, we study here the particular case of the low Mach and
Alfv\'en number limit of the compressible ideal
magnetohydrodynamics (MHD) equations in the presence of a large uniform magnetic field, which was the motivating example for our work. As we will
show, that system has essentially only one limit system, although the limit
\begin{equation}
  \label{eq:mulimdef}
  \mu_{\lim}\eqdef\lim_{\epA,\epM\to0} \mu
\end{equation}
of the ratio
\begin{equation}\label{eq:mudef}
  \mu\eqdef\tfrac{\epA}{\epM}
\end{equation}
appears in that limit system as a parameter. Even for the MHD system the study of the convergence rate is much more intricate than for
two-scale singular limits 
because, as described below, the bounds on the first time derivative satisfied by solutions to three-scale
systems are weaker than those satisfied for two-scale systems, and the eigenspace projections of the large operator $\fr{\epA}\cLA+\fr{\epM}\cLM$
depend on the ratio of the small parameters instead of being fixed as in the two-scale case.

The MHD system in three spatial dimensions that we study, derived in \S\ref{sec:curl} from a standard formulation of that system, is
\begin{subequations}\label{MHDsy}
\begin{align}
  \label{MHDsy:r}a(\epM r)\,&\big(\pt r+(\vu\cn) r)\;+\tfrac{a(\epM r)\rho(\epM r)}{\epM}\nc\vu=0 %(r) \nc \vu+\epM^{-1}\nc\vu=0,
  \phantom{v\;}
  \\
\label{MHDsy:vu}\rho(\epM r)\,&\big(\pt\vu+(\vu\cn )\vu\big)
     +\tfrac{a(\epM r)\rho(\epM r)}{\epM} \grad r 
  + \nabla\tfrac{|\vb|^2}{2}\phantom{\vb}-(\vb\cn)\vb  =\tfrac{\pz\vb-\nabla b_3}{\epA}, \\
  \label{MHDsy:vb}&\phantom{\big(}\pt\vb+(\vu\cn)\vb\phantom{\big)+\tfrac{a(\epM r)\rho(\epM r)}{\epM}\grad r\,}
  +(\nc\vu)\vb-(\vb\cn)\vu  =\tfrac{\pz\vu- \vez\,\nc\vu}{\epA},\\
\label{eq:divb0}
&\phantom{\big(}\nc\vb=0,  %\qquad& \int_{\mT^3}b_3\,dxdydz\equiv   0\,,
\end{align}\end{subequations}
where 
\begin{equation}
  \label{eq:rhodef2}
  a(s)\eqdef\frac{p'(1+s)}{1+s}, \qquad \rho(s)\eqdef 1+s. 
\end{equation}
%Note that in \eqref{MHDsy}  ``symmetric terms'' are grouped into the same vertical column.

%As noted in \S\ref{sec:curl},
The divergence-free condition \eqref{eq:divb0} on the magnetic field
is preserved by the dynamics of \eqref{MHDsy:vb}, and so is just a restriction on the initial data.
 Hence straightforward calculations show that the
 system \eqref{MHDsy} has the form \eqref{eq:general}, with $\vV=(r,\vu,\vb)$.

Our main result is a rate of convergence of solutions of the MHD system~\eqref{MHDsy} as the small parameters $\epA$ and $\epM$ tend to
  zero. As a preliminary, we prove a uniform existence and a convergence result, including determining the limit system.
  Before stating these results we discuss notations, operators, initial data, and parameters that will be used in the statement of the theorem.
  First, we let $\|\,\|_{k}$ denote the $H^k$ norm. 
Next, for any vector $\vw$ let its ``horizontal'' part $\wh$ denote its
$x,y$ components $\left(\begin{smallmatrix} w_1\\w_2\end{smallmatrix}\right)$, and let $\cPdh$ denote the {\it two-dimensional}
Leray-Helmholtz projection onto divergence-free velocity fields in the $x,y$
plane or $2$-torus, i.e.
\begin{equation}\label{def:cPdh0}
\cPdh\wh:= \wh-\nabh\Delta_{\mathsf h}^{-1}\nch\wh,
\qquad \text{
where $\nabh:=\left(\begin{smallmatrix}\pa_x\\\pa_y\end{smallmatrix}\right)$,\ \  $\Delta_{\mathsf h}:=  \pa_x^2+\pa_y^2$}.
\end{equation}
The large terms in \eqref{MHDsy} form the ``Alfv\'en'' and ``Mach'' operators
\begin{equation}\label{eq:largeops}
  \cLA\vV:=\left(
\begin{smallmatrix}
0\\\left(
  \begin{smallmatrix}
     -\nabh b_3+\pz \bh  \\ 0
  \end{smallmatrix}
\right)
\\
 \left(
\begin{smallmatrix}
  \pz \uh \\ -\nch \uh
\end{smallmatrix}
\right)
\end{smallmatrix}
\right)
,\qquad
  \cLM\vV:= \left(
\begin{smallmatrix}
  -\nc\vu\\\left(
  \begin{smallmatrix}
   -\nabh r \\ -\pz r 
  \end{smallmatrix}
\right)
\\
0_3
\end{smallmatrix}
\right)
,
\end{equation}
where for notational convenience we have normalized the pressure law $p(\rho)$ to satisfy satisfy
\begin{equation}\label{eq:pp1}
  p'(1)=1
\end{equation}
by rescaling $\epM$.
Also,  the full average $\avv f$ and the vertical average $\az\! f$ of any function~$f$ defined on the $3$-torus~$\mT^3$ are
  \begin{equation}\label{eq:avvdef}
    \avv f\eqdef \frac{\iiint f\,dx\,dy\,dz}{\iiint 1\,dx\,dy\,dz}, \qquad  (\az f)(x,y):= \frac{\int f(x,y,z)\,dz}{\int 1\,dz}.
  \end{equation}

  Although the uniform existence and convergence results require that the initial data
satisfy the ``well-preparedness'' condition
\begin{equation}\label{eq:wellprep}
    \|(\epA^{-1}\cLA+\epM^{-1}\cLM)\vV^0\|_{{n-1}}\le c,
  \end{equation}
they do not require any assumption about the rate at which the initial data converge to their
  limit. However, such an assumption is obviously required in order to obtain a rate of convergence of solutions of the PDE. In the convergence rate result we
  assume that the initial data has the form developed in \S\ref{sec:fis}, which is a specialization of the general form of initial data satisfying
  \eqref{eq:wellprep}. Specifically, after expanding the initial data in powers of the small parameters and their ratio, the leading-order terms are
  assumed to be independent of the small parameters in order to avoid degrading the convergence rate. However, valid estimates for any initial
  data satisfying \eqref{eq:wellprep} below can be
  obtained simply by adding the size of the difference of the initial data when that difference is larger than the estimates obtained below.

When the parameter $\mu$ in \eqref{eq:mudef} is fixed then
\eqref{eq:general} and \eqref{MHDsy} essentially contain
only one small parameter and hence have only two time scales.
Uniform existence and convergence results for initial-value problems
of general systems containing one small parameter
were obtained in \cite{KM81}.
Moreover, their results remain valid with only cosmetic changes to the
proofs whenever $\mu_{\lim}>0$. 
Convergence rate theorems for both specific and general two-scale systems have been proven in
  \cite{KM82,Sch:rate,Sch:limits,Cheng:SIMA:2012,Cheng:Mach}.
We therefore focus on the more challenging case when $\mu\to0$, although our results will be phrased so as to remain valid when $\mu_{\lim}>0$.
It will be convenient to express our convergence results using just powers of $\epM$, by
defining a parameter~$\nu$ determined by
 \begin{equation}
  \label{eq:nudef}
  \epA=\epM^{1+\nu}, \quad\text{or, equivalently,} \quad \mu=\epM^\nu,
  \quad \text{i.e.,} \quad \nu\eqdef  \tfrac{\ln(\fr\epA)}{\ln(\fr\epM)}-1=\tfrac{\ln(\fr \mu)}{\ln(\fr\epM)},
\end{equation}
where in view of the results for two-scale singular limits
we will assume for notational simplicity that
\begin{equation}
  \label{eq:epAltepM}
  \epA< \epM, \qquad\text{i.e., $\mu<1$ and $\nu>0$.}
\end{equation}

To simplify the exposition we will assume that the spatial domain is periodic. The uniform existence result remains valid with the same proof when the
spatial domain is $\mathbb{R}^3$, while the the limit system is then identically zero because it is independent of the vertical coordinate.
  
Throughout this paper $c$ and $C$ denote positive constants that are independent of $\epA$ and $\epM$, which may take different values in each appearance.

\begin{thm}\label{thm:theom1}
  Let $n\ge 3$ be an integer. Assume that the spatial domain is $\mT^3$
  and that the small parameters are restricted to the region
\begin{equation}
  \label{eq:epAepMgen}
  0<\epM\le \epM^0\qquad\text{and}\qquad \epA\ge c\,\epM^{1+\fr{n-1}}.
\end{equation}
Assume in addition that the initial data $\vV^0\eqdef(r^0, \vu^0, \vb^0)$ for
system \eqref{MHDsy},
which may depend on the small parameters $\epA$ and $\epM$,
are uniformly bounded in $H^n$ and satisfy \eqref{eq:divb0} and \eqref{eq:wellprep}.
  \begin{description}
  \item[Uniform Existence.] Under the above conditions
     there
  exist fixed positive  $T$ and $K$ such that for $(\epA,\epM)$ satisfying \eqref{eq:epAepMgen}
  the solution to \eqref{MHDsy}   having
  the initial data $\vV^0$ exists for $0\le t\le T$ and satisfies
    \begin{equation}\label{eq:uniform}
    \sup_{0\le t\le T}  \left[ \|\vV\|_{n}+\| \vV_t\|_{0}+\sum_{j=1}^n \epM^{j-1}\left(\min(\tfrac{\epA}{\epM},1)\right)^{n-1}
    \|\pt^j  \vV\|_{{n-j}}  \right]\le K.
  \end{equation}

\item[Convergence and Limit.]
  Assume in addition that the normalization \eqref{eq:pp1} holds and
  that as $(\epA,\epM)$ satisfying \eqref{eq:epAepMgen}  tend to zero, their ratio $\frac{\epA}{\epM}$ converges to some value
$\mu_{\lim}$ and the initial data $\vV^0$ converges in $H^n$ to $\vVlim^0$.
Then the solution $\vV=(r,\vu=(\uh,u_3),\vb=(\bh,b_3))$ of the MHD system \eqref{MHDsy} with initial data $\vV^0$ converges in $C^0([0,T];H^{n-\alpha})$ for every
$\alpha>0$. Its limit is independent of $z$, and is  the unique
solution~$\vVlim=(\rlim,(\uhlim,\uvlim), (\bhlim,\bvlim))$ of the limit system
\begin{subequations}\label{limit:CJS0}
\begin{align}
\label{eq:limit:r}
%  &\partial_t\rlim+(\uhlim\cnh)\rlim=0,
(1+\mu_{\lim}^2)&\left[ \partial_t \rlim+(\uhlim\cnh)\rlim\right]+\mu_{\lim}(\bhlim\cnh) \uvlim=0,
  \\
\label{eq:limit:uh}
\cPdh \big(&\partial_t\uhlim+(\uhlim\cnh)\uhlim-(\bhlim\cnh)\bhlim\big)=0,&  \nch\uhlim=0,\\
  \label{eq:limit:u3}
 &\partial_t\uvlim+(\uhlim\cnh)\uvlim+\mu_{\lim}(\bhlim\cnh)\rlim=0,  
%  &\partial_t\uvlim+(\uhlim\cnh)\uvlim=0,
  \\
\label{eq:limit:bh}
&\partial_t\bhlim+(\uhlim\cnh)\bhlim-(\bhlim\cnh) \uhlim=0,&  \nch\bhlim=0,\\
  \label{eq:limit:b3}
&\bvlim=\avv\bvlim^0-\mu_{\lim}(\rlim-\avv\rlim^0)
\end{align}
\end{subequations}
having initial data $\vVlim^0$.

\item[Rate of Convergence.]  In addition to the original assumptions and the additional assumptions of the convergence part,
  assume that the initial data for the MHD system have the more
specific form \eqref{eq:initfis}, \eqref{eq:divb00}, \eqref{eq:initfixed} and that \eqref{eq:epAltepM} holds.
Then there is a constant $c$ independent of $\epA$ and $\epM$ such that for all $t\in[0,T]$,
    \begin{subequations}
      \label{eq:errest}
      \begin{align}
   \label{eq:like:fastest2}
        &\begin{aligned}\|(&1-\cPdh\az)\uh\big\|_{j}+\|(1-\az)\bh\|_{j}
          %\\&
          +\|b_3-\avv \bvlim^0+\mu (\az r-\avv \rlim^0)\|_{j}\le c\, \epM^{1-(j-1)\nu}, \quad j=0,\ldots,n-1,
        \end{aligned} 
        \\
  \label{eq:like:slowest1}
  &\|\cPdh\az \uh-\uhlim\|_{n-2}+\|\az\bh-\bhlim\|_{n-2}\le c\, \epM,
        \\
\label{eq:likeintest}
    &\|(1-\az)r\|_{j}+\|(1-\az)u_3\|_{j}\le c\, \epM^{1-(j-1)\nu}, \quad j=1,\ldots,n-1, 
 \\ \label{eq:like:slowru3est2}
  &\|\az r-\rlim\|_{n-2}+\|\az u_3-\uvlim\|_{n-2}\le c \left[\epM^{1-\max(n-5,0)\nu}+|\mu-\mu_{\lim}|\right].
 \\\intertext{Moreover, there exist $O(1)$ correctors $(\rcor,\uvcor)$ defined in \eqref{eq:rcor}--\eqref{eq:uvcor} such that}
        \label{eq:like:slowestcor2}
        &\begin{aligned}
          \|\az r-(\rlim+\tfrac{\mu-\mu_{\lim}}{1+\mu^2}\rcor)\|_{n-2}&+\|\az u_3-(\uvlim+(\mu-\mu_{\lim})\uvcor)\|_{n-2}
          %\\&\qquad
          \le c\,  \epM^{1-\max(n-5,0)\nu}.
        \end{aligned}
      \end{align}
    \end{subequations}
\end{description}
\end{thm}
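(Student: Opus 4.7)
The plan is to prove the three parts of the theorem in order, with the first two following standard singular-limit techniques adapted to the three-scale setting and the rate estimates requiring a detailed eigenmode decomposition whose structure is dictated by \eqref{eq:largeops}.

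For \textbf{Uniform Existence}, I would apply the symmetric hyperbolic energy method directly to \eqref{MHDsy}. Since $\cLA$ and $\cLM$ are skew-adjoint, they drop out of the $L^2$ inner product with $\vV$ or with $\pa^\alpha\vV$ after symmetrization, so standard Moser-type estimates give $\|\vV\|_n$ bounded on a parameter-independent time interval by a bootstrap. The bound on $\vV_t$ in $L^2$ follows from \eqref{eq:wellprep} propagated in time. The higher time-derivative bounds with the weight $\epM^{j-1}(\min(\mu,1))^{n-1}$ are proved inductively: differentiating \eqref{MHDsy} in time $j$ times and then estimating $\|\pt^j\vV\|_{n-j}$ from the $(j-1)$-st bound, noting that each time differentiation of the singular term $\epA^{-1}\cLA\vV$ costs a factor that is absorbed by trading one spatial derivative from $H^{n-j}$ against it. The restriction $\epA\ge c\,\epM^{1+1/(n-1)}$ is precisely what is needed so that the total loss over $j=1,\ldots,n$ closes.

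For \textbf{Convergence and Limit}, the bounds on $\vV$ in $L^\infty H^n$ and on $\vV_t$ in $L^\infty L^2$ give relative compactness in $C^0([0,T];H^{n-\alpha})$ via Aubin--Lions. The identification of the limit uses that the uniform bound on $(\epA^{-1}\cLA+\epM^{-1}\cLM)\vV$ forces $\vVlim$ to lie in the joint kernel appropriate to $\mu_{\lim}$. Inspection of \eqref{eq:largeops} shows this kernel consists of $z$-independent fields with $\nch\uhlim=0$, $\nch\bhlim=0$, and the scalar relation $\bvlim=\mathrm{const}-\mu_{\lim}\rlim$ coming from the intermediate mode of $\cLM$ within $\ker\cLA$. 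Projecting the nonlinear transport terms of \eqref{MHDsy} by $\cPdh\az$, $\az$, and $\avv$ and passing to the limit (using strong $H^{n-\alpha}$ convergence of products) yields \eqref{eq:limit:r}--\eqref{eq:limit:bh}, while \eqref{eq:limit:b3} comes from preserving $\avv \bvlim$ and the above scalar relation.

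For the \textbf{Rate of Convergence} I would decompose $\vV$ according to the eigenstructure of $\epA^{-1}\cLA+\epM^{-1}\cLM$: fast modes $(1-\cPdh\az)\uh$ and $(1-\az)\bh$ with eigenfrequencies $O(1/\epA)$; intermediate modes $(1-\az)r$, $(1-\az)u_3$, and the combination $b_3-\avv\bvlim^0+\mu(\az r-\avv\rlim^0)$ with eigenfrequencies $O(1/\epM)$; and slow modes $\cPdh\az\uh$, $\az\bh$, $\az r$, $\az u_3$. The appearance of a $\mu$-weighted combination in \eqref{eq:like:fastest2} reflects the $\mu$-dependence of the intermediate eigenvector. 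For each oscillating component, the standard time-integration trick---rewriting a fast component $f$ using the PDE as $f=(\ep/\lambda)\cdot\mathrm{eigenphase}^{-1}(\pt f+\mathrm{spatial})$ and integrating by parts---trades each factor of $\pt$ for a factor of $\epA$ or $\epM$. Iterating $k$ times produces $\ep^k$ at the cost of $\|\pt^k f\|$, which by \eqref{eq:uniform} is at most $K\epM^{-(k-1)}\mu^{-(n-1)}$; optimizing $k$ against $j$ spatial derivatives yields the stated exponent $1-(j-1)\nu$ in \eqref{eq:like:fastest2} and \eqref{eq:likeintest}. For the slow modes, I would subtract the limit system \eqref{limit:CJS0} from the $\cPdh\az$- or $\az$-projected MHD equations: the residual consists of quadratic averages of fast/intermediate oscillations (small by the previous step) plus a coefficient-mismatch term $\mu-\mu_{\lim}$ that vanishes for $\uh$ and $\bh$ but is nonzero for $r$ and $u_3$. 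The correctors $(\rcor,\uvcor)$ of \eqref{eq:rcor}--\eqref{eq:uvcor} are defined to absorb this mismatch at leading order, giving \eqref{eq:like:slowestcor2}, and \eqref{eq:like:slowru3est2} follows by triangle inequality together with the trivial bound on the corrector contribution.

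The main obstacle I anticipate is twofold. First, the eigenspace projections are $\mu$-dependent and therefore do not commute cleanly with $\pt$ or with the nonlinear transport operators; the commutator errors from writing the MHD equations in the $\mu$-adapted basis must be bookkept carefully, since each commutator can cost one time or spatial derivative and hence affect the exponent. Second, obtaining exactly the loss $\max(n-5,0)\nu$ in \eqref{eq:like:slowru3est2} and \eqref{eq:like:slowestcor2}, rather than a crude overestimate, requires performing precisely the right number of integrations by parts on the quadratic oscillatory averages that drive the slow equations---the exponent $5$ strongly suggests that these averages need five integrations by parts to extract enough cancellation, after which further integration is blocked by the degradation in the time-derivative bound \eqref{eq:uniform}. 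Producing the sharpness corroborated by the corrector construction \eqref{eq:like:slowestcor2} is what distinguishes this from the simpler two-scale case.
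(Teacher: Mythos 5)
Your outline of the three-part strategy---symmetric-hyperbolic energy estimates, compactness plus kernel identification, and a fast/intermediate/slow decomposition with correctors absorbing the $\mu$-dependence of the limit system---matches the paper's architecture, and your observation that the $|\mu-\mu_{\lim}|$ term in \eqref{eq:like:slowru3est2} originates from the coefficient mismatch in the density and vertical-velocity equations (but not in the horizontal ones) is correct. However, there are two concrete gaps in your rate-of-convergence argument.

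First, static estimates of the kind you describe do not close for the intermediate modes $(r^I,u_3^I)=((1-\az)r,(1-\az)u_3)$. Solving the PDE for $(\cLA+\mu\cLM)\vV$ and invoking the interpolated time-derivative bound $\|\vV_t\|_j\le c\mu^{-j}$ (equivalently, your iterated time-integration with $k=n-j$) yields only $\|(r^I,u_3^I)\|_j\le c\,\epM^{1-j\nu}$, which is weaker by a full factor of $\mu=\epM^\nu$ than the claimed $c\,\epM^{1-(j-1)\nu}$ in \eqref{eq:likeintest}. The culprit is that the eigenvalues of $\cLA+\mu\cLM$ on the intermediate subspace are only $O(\mu)$, not $O(1)$, so the large operator controls those modes more weakly. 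Closing this gap requires a separate \emph{dynamic} argument: one must construct the exact $\mu$-dependent intermediate eigenvectors $\vV_{\alpha},\vV_{\beta}$ of $\cLA+\mu\cLM$ (involving the nontrivial Fourier multiplier $\mathcal{Q}$ of \eqref{eq:qdef}), verify that $\alpha=(1-\az)\vValpnodiv\cdot\vV$ and $\beta=(1-\az)\vV_\beta\cdot\vV$ satisfy a symmetric two-scale system whose right side is bounded by $c(\|(\alpha,\beta)\|_j+\ep_j)$, and then run a Klainerman--Majda energy estimate. You anticipate that $\mu$-dependent projections produce commutator errors, but merely noting the difficulty is not the same as supplying the construction, and the static route cannot be pushed to the needed exponent.

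Second, the heuristic that the exponent $\max(n-5,0)\nu$ in \eqref{eq:like:slowru3est2} and \eqref{eq:like:slowestcor2} ``needs five integrations by parts'' is not what happens and would not yield the correct power. In the paper, the exact slow variable that evolves without large terms is $r^S-\mu\az b_3^F$, not $r^S$ itself; the perturbation theorem gives an $O(\epM+|\mu-\mu_{\lim}|)$ bound for that variable, and the passage to $r^S$ uses the algebraic identity \eqref{eq:rfix}, which introduces the remainder $\mu\,\az(b_3^F+\mu\Deltainv\Delta_h r)$. Estimating it in $H^{n-2}$ via the fast static bound \eqref{eq:fastest} taken at $j=n-3$ (so that $j+1=n-2$) gives $\mu\,\ep_{n-3}=\epM^{1-(n-5)\nu}$; the exponent is pure norm bookkeeping, with no additional integrations by parts. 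Relatedly, the $O(\epM)$ slow-mode estimate \eqref{eq:like:slowest1} relies on the improved $z$-averaged product estimate of Lemma~\ref{lem:azfgHk} together with the time-integrated variable $\vA=\int_0^t\az\uh^F$ bounded in \eqref{TA:epA2}; the generic Sobolev product estimate applied to the oscillatory quadratic residuals is too weak under the full parameter range allowed by \eqref{eq:epAepMgen}, so the ``small by the previous step'' shorthand in your outline is hiding a nontrivial ingredient.
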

The uniform existence part of Theorem~\ref{thm:theom1} is a special case of the
corresponding general result for systems \eqref{eq:general} stated in Lemma~\ref{lem:bound} and proven in \S\ref{sec:bound}, which is an improvement of
\cite[Theorem 3.6]{CJS17}. The convergence part of Theorem~\ref{thm:theom1} will be proven in \S\ref{sec:conv}. The convergence-rate estimates in~\eqref{eq:errest} 
are direct consequences of the estimates \eqref{eq:fastest}, \eqref{eq:intestdyn}, \eqref{eq:slowest1}, \eqref{eq:slowru3est2}, and
\eqref{eq:slowest2b}  proven in \S\ref{sec:rate}.

Under the scaling \eqref{eq:epAepMgen}
all powers
of $\epM$ appearing in \eqref{eq:errest} are positive, so a nontrivial rate of convergence is obtained over the full range of allowed
values of $\nu$, in all the norms listed in the theorem. 
The corrector estimate \eqref{eq:like:slowestcor2} has been included because
\eqref{eq:epAepMgen}\textendash\eqref{eq:nudef} imply that
estimate \eqref{eq:like:slowru3est2} is much weaker than the other estimates
in \eqref{eq:errest} in the main case of interest in which $\mu_{\lim}=0$.
%The constraint \eqref{eq:epAepMgen} together with the relation \eqref{eq:nudef} between the
%parameters implies that $\epM^{1-\max(n-5,0)}<<\epM^\nu=\mu$, so when $\mu_{\lim}=0$ the bound in \eqref{eq:like:slowru3est2} reduces to
%$O(\epM^\nu)$, which is much larger than the other error bounds in \eqref{eq:errest}.
The improved estimate \eqref{eq:like:slowestcor2} involving the
corrector shows that \eqref{eq:like:slowru3est2} is in fact sharp in this case, and gives a formula for the principal error term.

For well-prepared initial data like that considered here, the convergence rate
for solutions of two-scale systems is typically first order in the single
small parameter \cite{Sch:rate,Sch:limits}. That result depends crucially on
the uniform boundedness of the first time derivative of solutions being
propagated for positive time, which does \emph{not} generally hold for
three-scale systems~\eqref{eq:general} \cite[\S2]{CJS17}. We use the
time-integration method developed in \cite{Cheng:SIMA:2012,Cheng:Mach} to
mitigate the effect of the lack of uniform boundedness of first time
derivatives. Moreover, the estimates in \eqref{eq:errest} \emph{are} $O(\epM)$
for those components and norms for which uniform boundedness of the first time
derivative holds, and tend, except for \eqref{eq:like:slowestcor2}, to the
two-scale $O(\epM)$ convergence rate as $\nu\to0$, which makes
\eqref{eq:general} tend to a two-scale system. Obtaining that asymptotic
consistency is only possible on account of the improvement \eqref{eq:uniform}
over the estimate $\epM\|\vV_t\|_{{n-1}}\le c$ in \cite{CJS17}.

The $H^j$ estimates \eqref{eq:like:fastest2}
and \eqref{eq:likeintest} for intermediate values of $j$ are obtained using interpolation. Those estimates
are the starting point for an improved estimate for $z$-averages of products
derived in \S\ref{sec:azprod}, which is used in the proof of \eqref{eq:like:slowest1},
and should be useful more generally.

One of the main techniques used in the proof of \eqref{eq:errest} is partitioning the solution into fast, intermediate, and slow modes,
and analyzing each mode separately. In contrast to the two-scale case (e.g. \cite[\S2]{Cheng:Mach}), 
the exact eigenspaces of the large operator $\fr\epA \cLA+\fr\epM \cLM$ having eigenvalues of sizes strictly $O(\fr\epA)$,
$O(\fr\epM)$, and $o(\fr\epM)$ depend on the parameter~$\mu$. For simplicity,
in \S\ref{sec:fis} we define the fast, intermediate, and slow
modes to be the fixed projections onto the limits as $\mu\to0$ of the exact
eigenspaces. These projections are also used to
determine the appropriate form of the initial data used in the convergence rate part of Theorem~\ref{thm:theom1}.
However, certain estimates in
\S\ref{sec:rate} for the intermediate modes require the use of the exact
$\mu$-dependent modes.

The only previous convergence rate result we know for evolutionary PDEs with
two parameters tending to zero independently appears in~\cite{MR3723323},
which considered the low Rossby and magnetic Reynolds number limit of the
stochastically-forced viscous incompressible rotating MHD system.  Compared to
the hyperbolic system considered here the deterministic case without forcing
\cite[p. 4444]{MR3723323} of their system has many simplifying features,
including the absence of a matrix multiplying the time derivatives,
which eliminates the need for restriction~\eqref{eq:epAepMgen}, the
presence of a closed $L^2$ energy estimate, and the presence of regularizing
viscous terms whose diffusivity rates tend to infinity as the small parameters
tend to zero, which yields a highly parabolic system that induces smoothing
when the large operator is used to determine the fast components in terms of
the slow component.

\section{Analysis of the Large Operator}\label{sec:fis}

Following \cite[\S4]{CJS17} but without treating each Fourier mode separately,
let $\mathbb{P}^0$ denote the $L^2$-orthogonal projection operator onto the
nullspace of $\cLA$, and let $\mathbb{P}^1$ denote the $L^2$-orthogonal
projection operator onto the nullspace of $\mathbb{P}^0\cLM\mathbb{P}^0$.
Then $R(I-\mathbb{P}^0)\subseteq
N(\mathbb{P}^0\cLM\mathbb{P}^0)=R(\mathbb{P}^1)=N(I-\mathbb{P}^1)$, so
$R(I-\mathbb{P}^0)\perp R(I-\mathbb{P}^1)$, and hence
\begin{equation}\label{eq:orthproj}
  (I-\mathbb{P}^0)(I-\mathbb{P}^1)=0=(I-\mathbb{P}^1)(I-\mathbb{P}^0).
\end{equation}
Expanding the factors in \eqref{eq:orthproj} shows that 
$\mathbb{P}^0\mathbb{P}^1=\mathbb{P}^1\mathbb{P}^0$,
which implies that $\mathbb{P}\eqdef\mathbb{P}^0\mathbb{P}^1$ is an orthogonal projection operator satisfying
$\mathbb{P}(I-\mathbb{P}^j)=0=(I-\mathbb{P}^j)\mathbb{P}$ for ${j\in\{0,1\}}$.
Moreover, \eqref{eq:orthproj} and the definition of $\mathbb{P}$ yield
$(I-\mathbb{P}^0)+(I-\mathbb{P}^1)+\mathbb{P}=I+(I-\mathbb{P}^0)(I-\mathbb{P}^1)=I$,
which shows that the sum of the fast, intermediate, and slow modes defined by
\begin{equation}\label{eq:modef}
  \vV^F\eqdef (I-\mathbb{P}^0)\vV, \quad \vV^I\eqdef (I-\mathbb{P}^1)\vV, \quad \vV^S\eqdef \mathbb{P}\vV
\end{equation}  
satisfies $\vV^F+\vV^I+\vV^S=\vV$.
These modes are the limits as $\mu\to0$ of the direct sums of the eigenspaces
of $\cLA+\mu\cLM$ whose eigenvalues are strictly $O(1)$, $O(\mu)$, and
$o(\mu)$, respectively \cite{MR678094}, \cite[\S4]{CJS17}.
Moreover, since the $\mathbb{P}^j$ are orthogonal projections onto the null spaces of constant-coefficient differential
operators they commute with derivatives, and hence are also orthogonal in any $H^k$. 
Therefore estimates for the full solution obtained by combining estimates for
each mode are as sharp as the component estimates, modulo constant factors.

The above results do not depend on the particular form of the operators~$\cLA$
and~$\cLM$. We now calculate the projections and modes for the MHD
system \eqref{MHDsy}.  For brevity, we restrict consideration to
$\vV=(r,\vu,\vb)$ satisfying $\nc\vb=0$, which causes no difficulties since we only
consider initial data and solutions satisfying that constraint.
Recall that $\cPdh$, $\az$, and $\avv$
were defined in \eqref{def:cPdh0} and~\eqref{eq:avvdef}.
\begin{lem}\label{lem:bdiv0}
  Assume that the spatial domain is $\mT^3$ and that $\vV=(r,\vu,\vb)$ where $\vb$ satisfies $\nc\vb=0$. Then
  \begin{enumerate}
  \item   $\cLA \vV=\boldzero$ iff   $\pz \uh=\boldzero=\pz \bh$, $\nch \uh=0$, and $b_3=\avv b_3$.

\item  $\left(\fr{\epA}\cLA +\fr{\epM}\cLM \right)\vV=\boldzero$ iff 
  \begin{equation}
    \label{eq:lalmz}
 \pz \vV=\boldzero, \quad \nch\uh=0,\quad b_3=\avv b_3
 -\tfrac{\epA}{\epM}\left( r-\avv r\right).
  \end{equation}

\item The formulas for the projections are
  \begin{equation}\label{def:P0}
  \begin{split}
\mathbb P^0=\begin{pmatrix}
         1& & \\&\left(\begin{smallmatrix}
         \cPdh \az I_{2\times2}%\begin{pmatrix} \az&0\\0&\az\end{pmatrix}
        & \\
         & 1
         \end{smallmatrix}\right)
         & \\& & \left(\begin{smallmatrix} \az I_{2\times2} %  \az& &\\  & \az&
         \\ &\avv
         \end{smallmatrix}\right)
       \end{pmatrix},
       \quad
       \mathbb P^1=\begin{pmatrix}
         \az& & \\&\left(\begin{smallmatrix}
         I_{2\times 2}& \\
         & \az
         \end{smallmatrix}\right)
         & \\& & I_{3\times 3}
       \end{pmatrix},
\\
\mathbb{P}=\begin{pmatrix}
         \az& & \\&\left(\begin{smallmatrix}
         \cPdh \az I_{2\times2}%\begin{pmatrix} \az&0\\0&\az\end{pmatrix}& 
        \\
         & \az
         \end{smallmatrix}\right)
         & \\& & \left(\begin{smallmatrix}
           \az I_{2\times2} %& &\\
         %& \az&
        \\
         &\avv
         \end{smallmatrix}\right)
       \end{pmatrix},
\end{split}
\end{equation}
where all missing entries vanish.

\item All eigenvalues of $\cLA+\mu\cLM$ that are $o(\mu)$ are identically zero.

\item Using the notations $\vV^\ell=(r^\ell,\vu^\ell,   \vb^\ell)$ for $\ell\in\{F,I,S\}$ and
  $\vw^\ell=(\wh^\ell,w_3^\ell)$ for $w\in\{\vu,\vb\}$, the formulas for the fast, intermediate and slow modes are
\begin{equation}
  \label{eq:modeforms}
  \begin{aligned}
    &\begin{pmatrix}
      r^F&\vu^F & \vb^F
        \end{pmatrix}=
  \begin{pmatrix}
    0&
    \begin{pmatrix}
      (1-\az\cPdh)\uh\\0
    \end{pmatrix}&
    \begin{pmatrix}
      (1-\az)\bh\\(1-\avv)b_3
    \end{pmatrix}
  \end{pmatrix},
  \\&
  \begin{pmatrix}
    r^I&\vu^I&\vb^I
  \end{pmatrix}=
  \begin{pmatrix}
    (1-\az)r&
    \begin{pmatrix}
      0_2\\(1-\az)u_3
    \end{pmatrix}&
\boldzero_3
  \end{pmatrix},
  \\&\begin{pmatrix}
    r^S&\vu^S&\vb^S
  \end{pmatrix}=
  \begin{pmatrix}
    \az r&
    \begin{pmatrix}
      \cPdh\az \uh\\\az u_3
    \end{pmatrix}&
    \begin{pmatrix}
      \az \bh\\\avv b_3
    \end{pmatrix}
  \end{pmatrix}.
\end{aligned}
\end{equation}
In particular,
\begin{equation}
  \label{eq:balpdivfree}
  \nc\vb^F=0,\quad \nc\vb^I=0,\quad \nc\vb^S=0,
\end{equation}
\begin{equation}
  \label{eq:ind:z}
  \text{$\vV^S$ is independent of $z$,}
\end{equation}
and 
\begin{equation}
  \label{eq:h:div:0}
  \nch\uh^S=0=\nch\bh^S.
\end{equation}

\item For any nonnegative integer $j$, there are constants $c_1$ and $c_2$ such that
  \begin{equation}
    \label{eq:fifromlarge}
    \begin{aligned}
      c_1\|(\cLA+\mu\cLM)\vV\|_j
      %\\&
      &\le \|\pz \uh^F\|_j+\|\nch \uh^F\|_{j}+\|\pz(\bh^F-\mu\Deltainv\nabh\pz r^I)\|_j
      \\&\qquad+\|b_3^F+\mu\Deltainv\Delta_h r\|_{j+1}
            +  \mu[\|\pz r^I\|_{j}+\|\pz u_3^I\|_j] 
      \\&\le c_2\|(\cLA+\mu\cLM)\vV\|_j.
    \end{aligned}
  \end{equation}
 \end{enumerate}
\end{lem}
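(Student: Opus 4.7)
The plan is to establish parts 1--3 by direct computation from the definitions of $\cLA$ and $\cLM$ in~\eqref{eq:largeops}, obtain parts 4--5 as almost immediate consequences, and devote the main effort to the two-sided equivalence~\eqref{eq:fifromlarge}.

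For part 1, I would read off $\cLA\vV=\boldzero$ componentwise to get $\pz\uh=0$, $\nch\uh=0$, and $\pz\bh=\nabh b_3$; combining the last with $\pz b_3=-\nch\bh$ from $\nc\vb=0$ yields $\Delta b_3=0$, forcing $b_3=\avv b_3$ and hence $\pz\bh=\nabh b_3=0$. Part 2 runs in parallel: the condition reduces to $(\cLA+\mu\cLM)\vV=\boldzero$, whose $r$-row gives $\nc\vu=0$ and whose other rows yield $\pz r=0$, $\pz u_3=-\nch\uh=0$, $\pz\uh=0$, and $\pz\bh=\nabh(b_3+\mu r)$; applying the divergence argument to the last identity now gives $\Delta(b_3+\mu r)=0$, hence $b_3+\mu r\equiv\avv b_3+\mu\avv r$, which is \eqref{eq:lalmz}. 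Part 3 then reads the projection formulas directly from the characterizations of $N(\cLA)$ and $N(\mathbb P^0\cLM\mathbb P^0)$; for the latter, computing $\mathbb P^0\cLM\mathbb P^0\vV$ (where $\cPdh\nabh=0$ kills the $\nabh r$ row) leaves only the scalar rows $-\pz u_3$ and $-\pz r$, whose joint null space gives the stated $\mathbb P^1$, and $\mathbb P=\mathbb P^0\mathbb P^1$ is then immediate.

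Part 5 follows at once from the formulas for $I-\mathbb P^0$, $I-\mathbb P^1$, and $\mathbb P$, with \eqref{eq:balpdivfree}, \eqref{eq:ind:z}, and \eqref{eq:h:div:0} each a one-line consequence of $\nch\az=\az\nch$, $\az\pz=0$, $\nch\cPdh=0$, and $\nc\vb=0$. For part 4 I would Fourier-decompose and compute the spectrum at each wavenumber $k=(k_h,k_3)$: in all three cases ($k_3=0$; $k_h=0$, $k_3\ne0$; and both nonzero) the characteristic polynomial factors cleanly, and in the generic case a perpendicular/parallel split of $\uh$ and $\bh$ produces the quadratic $\eta^2+|k|^2(1+\mu^2)\eta+\mu^2k_3^2|k|^2=0$ in $\eta=\lambda^2$ whose small root is $-\mu^2k_3^2+O(\mu^4)$. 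Thus every small nonzero eigenvalue is of exact order $\mu$, so $o(\mu)$ forces identical vanishing, which by part 2 exactly characterizes the slow subspace.

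The core of the argument is part 6. Substituting the mode decomposition into $(\cLA+\mu\cLM)\vV$ and using that $r^S$, $u_3^S$, $\bh^S$ are $z$-independent, $b_3^S$ is constant, and $\nch\uh^S=0$ collapses four of the five rows to $-\mu(\nch\uh^F+\pz u_3^I)$, $-\mu\pz r^I$, $\pz\uh^F$, and $-\nch\uh^F$; these directly produce four of the terms on the right of~\eqref{eq:fifromlarge}, and the $\mu\|\pz u_3^I\|_j$ contribution is recovered by combining the $r$-row with the $b_3$-row. The remaining $\uh$-row equals $X:=\pz\bh^F-\nabh b_3^F-\mu\nabh r$; taking $\nch X$ and inserting $\nch\bh^F=-\pz b_3^F$ from $\nc\vb^F=0$ yields $\nch X=-\Delta\tilde b_3^F$ with $\tilde b_3^F:=b_3^F+\mu\Deltainv\Delta_h r$. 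A short computation using $\Deltainv\Delta_h-I=-\Deltainv\pzz$ and $\pz r^S=0$ then gives the key identity
\begin{equation*}
X+\nabh\tilde b_3^F=\pz(\bh^F-\mu\Deltainv\nabh\pz r^I),
\end{equation*}
and $H^j$-boundedness of the zeroth-order Fourier multipliers $\nabh\Deltainv\nch$ and $\nabh\Deltainv$ converts this into the required two-sided estimate. The main obstacle is identifying the correct correctors $\mu\Deltainv\Delta_h r$ and $\mu\Deltainv\nabh\pz r^I$ in the $\uh$-row decomposition; once they are extracted from the $\nch X$ calculation, the rest is a routine bounded-multiplier argument.
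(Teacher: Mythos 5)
Your proposal is correct and, except for part~4, runs along essentially the same lines as the paper. Parts 1--3 and 5 match the paper's computations; the derivation of $\Delta b_3=0$ from $\pz\bh=\nabh b_3$ and $\nc\vb=0$, and of $\Delta(b_3+\mu r)=0$ in part~2, are exactly the paper's steps written in equivalent form, and the extraction of $\mathbb{P}^0\cLM\mathbb{P}^0\vV=-(\pz u_3,(0,\pz r),0)$ via $\cPdh\nabh=0$ reproduces the paper's \eqref{eq:p0mp0}.

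The genuine divergence is part~4. You propose a direct Fourier-mode computation: diagonalize $\cLA+\mu\cLM$ at each wavenumber $(k_h,k_3)$, show the characteristic polynomial factors into perp/parallel blocks, and verify that the parallel block yields the quadratic $\eta^2+|k|^2(1+\mu^2)\eta+\mu^2k_3^2|k|^2=0$ in $\eta=\lambda^2$, whose small root is of exact order $\mu^2$ (this is indeed consistent with the paper's formula \eqref{eq:qdef}, whose small eigenvalues are $\mp ik_3\mu\widehat{\mathcal{Q}}$). Thus no nonzero eigenvalue can be $o(\mu)$. The paper instead argues indirectly by a rank/dimension count: at each Fourier mode, the conditions \eqref{eq:lalmz} characterizing $N(\cLA+\mu\cLM)$ differ from the conditions characterizing $R(\mathbb{P})$ only by an $O(\mu)$ shear in the $b_3$ component, so the two subspaces have the same dimension; by the perturbation theory of~\cite{MR678094}, the latter also equals the dimension of the direct sum of the $o(\mu)$ eigenspaces, forcing those eigenvalues to vanish identically. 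Your route is more explicit and self-contained but needs the case analysis ($k_3=0$, $k_h=0$, both nonzero) and the verification of the polynomial factorization; the paper's route is shorter but outsources the continuity-of-spectral-projections step to perturbation theory. Both correctly reach the same conclusion.

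For part~6 you extract the $\uh$-row as a single object $X:=\pz\bh^F-\nabh b_3^F-\mu\nabh r$, compute $\nch X=-\Delta\tilde b_3^F$ using $\nc\vb^F=0$, and show $X+\nabh\tilde b_3^F=\pz(\bh^F-\mu\Deltainv\nabh\pz r^I)$, so that $X$ is the sum of $(I-\nabh\Deltainv\nch)X$ and $-\nabh\Deltainv\nch X$, each of which is one of the target quantities acted on by a bounded multiplier. That identity is exactly the content of the paper's chain \eqref{eq:b3fest}--\eqref{eq:bhf}, just organized around $X$ rather than term by term. Same mathematics, slightly cleaner bookkeeping.
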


\begin{proof}
  Applying $\cLA$ to $\vV$ yields $\left( 0, \left(\begin{smallmatrix}  \pz \bh-\nabh b_3\\0  \end{smallmatrix}\right),
  \left(\begin{smallmatrix}  \pz \uh\\-\nch \uh   \end{smallmatrix}\right)\right)$. Hence the last part of $\cLA\vV$ vanishes iff
  $\pz \uh=\boldzero$ and $\nch \uh=0$. Taking the horizontal divergence of
  the second component of $\cLA$ and using the fact that $\nc\vb=0$ yields
  \begin{equation}\label{eq:getlapb3}
    \nch (\pz \bh-\nabh b_3)= -\Delta b_3.
  \end{equation}
Hence if $\cLA\vV=0$ then $b_3$ is a constant, i.e., $b_3=\avv b_3$, which implies further that $\pz \bh=\nabh b_3=\boldzero$.
 On the other hand, when those conditions hold then each term in the second part of $\cLA\vV$ vanishes.

Similarly, 
\begin{equation}\label{eq:lalmv}
\left(\fr{\epA}\cLA +\fr{\epM}\cLM \right)\vV=
\left(
\begin{smallmatrix}
  -\fr{\epM}\nc\vu\\\left(
  \begin{smallmatrix}
   \fr{\epA}\left( -\nabh b_3+\pz \bh-\frac{\epA}{\epM}\nabh r\right) \\ -\fr{\epM}\pz r 
  \end{smallmatrix}
\right)
\\
\fr{\epA}\left(
\begin{smallmatrix}
  \pz \uh \\ -\nch \uh
\end{smallmatrix}
\right)
\end{smallmatrix}
\right).
\end{equation}
The last component of the second part vanishes iff
\begin{equation}\label{eq:rz0} \pz r=0,
\end{equation}
and since $\pz u_3=\nc\vu-\nch\uh$, the first and last parts vanish iff $\nch\uh=0$ and
$\pz\vu=\boldzero$. Next, use \eqref{eq:getlapb3} and \eqref{eq:rz0}
to write the horizontal divergence of the second part  as
  \begin{equation}\label{eq:delb3mudelhr}
    \nch \left( -\nabh b_3+\pz \bh-\frac{\epA}{\epM}\nabh r\right)=-(\Delta
    b_3+\tfrac{\epA}{\epM}\Delta_{\mathsf h} r)=
    \Delta( b_3+ \tfrac{\epA}{\epM} r),
  \end{equation}
  which  vanishes iff  the last equation of \eqref{eq:lalmz} holds.
  That equation together with \eqref{eq:rz0}  implies that $\pz b_3=0$. 
  Finally, the last equation of \eqref{eq:lalmz}  also shows that the horizontal components of the
  second part of \eqref{eq:lalmv} vanish iff  $\pz\bh=\boldzero$.

  The formula for $\mathbb{P}^0$ follows from the conditions for
  $\cLA\vV$ to vanish in the first part of the lemma, and that formula
  together with the formula for $\cLM$ in \eqref{eq:largeops} 
    implies that
  \begin{equation}\label{eq:p0mp0}
    \mathbb{P}^0\cLM\mathbb{P}^0\vV=-\left(\pz u_3, \left( \begin{smallmatrix}0\\\pz r\end{smallmatrix}\right),0\right).
  \end{equation}
  Formula \eqref{eq:p0mp0}
  implies the formula for $\mathbb{P}^1$, and the formulas for the
  $\mathbb{P}^j$ yield $\mathbb{P}$.

The conditions \eqref{eq:lalmz} for $(\cLA+\mu\cLM)\vV$ to vanish differ from
the conditions to belong to the null space of $\mathbb{P}$
only by adding an $O(\mu)$ term to the formula for $b_3$. Hence the rank of the restriction to any Fourier mode  of $N(\cLA+\mu\cLM)$
equals the rank of the restriction to that mode of $\mathbb{P}$, which in turn equals the dimension of 
the direct sum of all eigenspaces of $\cLA+\mu\cLM$ in that Fourier mode having eigenvalues of size~$o(\mu)$ \cite{MR678094}, \cite[\S4]{CJS17}.
Hence all such eigenvalues vanish identically.
%Alternatively, this fact can be
%obtained by calculating the dimensions of the restrictions to any Fourier mode
%of the ranges of $\cLA$ and $\mathbb{P}^0\cLM\mathbb{P}^0$, noting that
%perturbation theory \cite{MR678094} ensures that those are the dimensions of
%the restrictions to that Fourier mode of the eigenspaces of $\cLA+\mu\cLM$ of strict sizes
%$O(1)$ and $O(\mu)$ for $\mu$ near zero, calculating the dimension of the
%restriction to that Fourier mode of the null space of the full operator
%$\cLA+\mu\cLM$, and seeing that the sums of those dimensions is always the
%full dimension. 

The formulas \eqref{eq:modeforms} for the modes follow from their definition
\eqref{eq:modef} and formula~\eqref{def:P0} for the projections.  Formula
\eqref{eq:modeforms} for $\vb^S$ plus the fact that
$\az\vb$ is independent of $z$  imply that $\nc\vb^S=0$, and trivially
$\nc\vb^I=0$, which implies the rest of \eqref{eq:balpdivfree}. Each component
of $\vV^S$ contains $\az$ or $\avv$, so \eqref{eq:ind:z} holds. The left
equation in \eqref{eq:h:div:0} follows from the presence of the operator
$\cPdh$ in the formula for $\uh^S$, while the right equation there follows
from \eqref{eq:balpdivfree}--\eqref{eq:ind:z}.

The formula for $\uh^F$ in \eqref{eq:modeforms}  and the formula \eqref{eq:lalmv} for $(\cLA+\mu\cLM)\vV$ yield
\begin{align}\label{eq:1mazuhf}
    \|\pz\uh^F\|_j=\|\pz \uh\|_j\le c\|(\cLA+\mu\cLM)\vV\|_j,
  \\
  \label{eq:divhuhf}
  \|\nch\uh^F\|_j=\|\nch\uh\|_j\le c\|(\cLA+\mu\cLM)\vV\|_j.
\end{align}
%Next,
By the ellipticity of $\Delta$, formula \eqref{eq:modeforms} for $b_3^F$, the first identity in \eqref{eq:delb3mudelhr} and
\eqref{eq:lalmv},
\begin{equation}\label{eq:b3fest}
  \begin{aligned}
    \|b_3^F+\mu\Deltainv\Delta_h r\|_{j+1}&=\|\Deltainv[\Delta (1-\avv)b_3+\mu\Delta_h r\|_{j+1}\le c\|\Delta b_3+\mu\Delta_h r\|_{j-1}
  \\&=c \|\nch (\nabh b_3+\mu\nabh r-\pz \bh)\|_{j-1}\le c \|(\cLA+\mu\cLM)\vV\|_j
\end{aligned}
\end{equation}
Also, combining  
formula \eqref{eq:modeforms} for $\bh^F$, formula
\eqref{eq:lalmv} for $\cLA+\mu\cLM$, and \eqref{eq:b3fest} yields
\begin{equation}\label{eq:bhf}
  \begin{aligned}
    \|\pz(\bh^F-\mu\Deltainv\nabh\pz r^I)\|_{j}
    &= \|\pz(\bh-\mu\Deltainv\nabh\pz r)\|_{j}
    \\&=\|\pz \bh-\mu\nabh\Deltainv(\Delta r-\Delta_h r)\|_{j}
  \\&=\|(\pz \bh-\mu\nabh r-\nabh b_3)+\nabh(b_3+\mu\Deltainv\Delta_h r)\|_{j}
  \\&\le \|(\cLA+\mu\cLM)\vV\|_{j}+\|b_3^F+\mu\Deltainv\Delta_h r\|_{j+1}\le c\|(\cLA+\mu\cLM)\vV\|_{j}.
\end{aligned}
\end{equation}
 By the formula
\eqref{eq:modeforms} defining the modes and formula \eqref{eq:lalmv} for $\cLA+\mu\cLM$,
\begin{align}
  \label{eq:staticI1}
  \mu\|\pz r^I\|_j&=\mu\|\pz r\|_j\le c\|(\cLA+\mu\cLM)\vV\|_{j},
\\ \label{eq:staticI2}                    
  \mu\|\pz u_3^I\|_j&=\mu\|\pz u_3\|_j=\mu\|\nc\vu-\nch\uh\|_j\le c\|(\cLA+\mu\cLM)\vV\|_{j}
\end{align}
Combining \eqref{eq:1mazuhf}\textendash\eqref{eq:staticI2} yields the right inequality
in \eqref{eq:fifromlarge}, and the expressions estimated there yield all the terms in $(\cLA+\mu\cLM)\vV$ so the left inequality
also holds.
\end{proof}

Solving the PDE for $(\cLA+\mu\cLM)\vV$ and using the bounds \eqref{eq:uniform} to estimate the result yields a bound for the $H^{n-1}$ norm 
of that expression, which by \eqref{eq:fifromlarge} implies ``static'' estimates for the fast and intermediate modes, which will be written explicitly
in \S\ref{sec:rate}.
  However, we also need to obtain ``dynamic'' estimates for the intermediate and slow
  modes via differential inequalities.  To do so we cannot use the limit modes defined above.
  The exact slow eigenspace of the zero eigenvalue of $\cLA+\mu\cLM$ was determined in
  Lemma~\ref{lem:bdiv0}.  In the next lemma we obtain formulas for $7$-vectors of Fourier multiplier operators
  $\vV_{\alpha}$ and $\vV_{\beta}$  and the self-adjoint Fourier multiplier operator $\mathcal{Q}$, such that
  \begin{equation}\label{eq:vabq}
    (\cLA+\mu\cLM)\vV_{\alpha}\cdot \widetilde{\vV}+\mu \mathcal{Q}\pz\vV_{\beta}\cdot\widetilde{\vV}=0=
     (\cLA+\mu\cLM)\vV_{\beta}\cdot\widetilde{\vV}+\mu \mathcal{Q}\pz\vV_{\alpha}\cdot\widetilde{\vV}   
  \end{equation}
  for every vector-valued function~$\widetilde{\vV}$.
  Equations \eqref{eq:vabq} imply that for every Fourier mode $(k,l,m)$ the linear combinations $(\vV_\alpha\pm \vV_{\beta})e^{i(kx+ly+mz)}$ are eigenfunctions
  of the operator $\cLA+\mu\cLM$ with purely imaginary or zero eigenvalues  $\mp im \mu \widehat{\mathcal{Q}}$ where
  \begin{equation}\label{eq:qhat} \widehat{\mathcal{Q}}\eqdef e^{-i(kx+ly+mz)}\left[ \mathcal{Q} e^{i(kx+ly+mz)}\right],
  \end{equation}
  i.e.,
  they yield the exact  $\mu$-dependent intermediate eigenspaces.
 % In other words, in the equations for $\pt \left((\vV_\alpha\pm \vV_{\beta})\cdot\vV\right)$ the largest terms will be, by the  MHD system itself,
 % the skew-adjointness of $\cLA,\cLM$ and (3.23),
%\begin{equation}\label{eq:vablalm}\begin{aligned}
%    \frac1\epA(\vV_\alpha\pm \vV_{\beta})\cdot\left((\cLA+\mu\cLM)\vV\right)&
%    =-\frac1\epA\left((\cLA+\mu\cLM)(\vV_\alpha\pm \vV_{\beta})\right)\cdot\vV\\
%    &=\frac1\epM \mathcal{Q}\pz(\vV_{\beta}\pm\vV_\alpha)\cdot\vV.
%  \end{aligned}
%\end{equation}
However, in \S\ref{sec:rate} we will obtain dynamic estimates for  $(1-\az)\vV_\alpha\cdot\vV$ and $(1-\az)\vV_{\beta}\cdot\vV$
 rather than $(\vV_\alpha\pm \vV_{\beta})\cdot\vV$, to reduce disruption to the structure of the rest of the PDE. The operator $1-\az$ is
 applied because the eigenvalues $\pm im \mu \mathcal{Q}$ are only of size $\mu$ when $m\ne0$. Moreover, since $\nc\vb=0$  we will replace $\vV_\alpha$ by
the variant $\vValpnodiv$ defined in \eqref{eq:vabdef} that omits the gradient term in the magnetic field component. 

  Since the limits as $\mu\to0$ of $\vV_{\alpha}$ and $\vV_{\beta}$ should belong to the intermediate mode
  defined in \eqref{eq:modeforms}, trying various perturbations leads to the ansatz
    \begin{equation}
      \label{eq:vabdef}
      \begin{aligned}
        \vV_{\alpha}&\eqdef \vV_\alpha^{\textnormal{\tiny div}}+\vValpnodiv=
        \begin{pmatrix}
      0\\0_3\\\mu \mathcal{A}\Deltainv \pz\nabla
      \vphantom{\left(\begin{smallmatrix}  0\\0\\1 \end{smallmatrix}\right)}
    \end{pmatrix}
    +
    \begin{pmatrix}
      1\\0_3\\-\mu\left(
      \begin{smallmatrix}
        0\\0\\1
      \end{smallmatrix}\right)+\mu^3 \left(
      \begin{smallmatrix}
        0\\0\\\mathcal{B}
      \end{smallmatrix}\right)
    \end{pmatrix},
\\
    \vV_{\beta}&\eqdef
    \begin{pmatrix}
      0\\\left(
      \begin{smallmatrix}
        \mu^2 \mathcal{C} \Deltainv \pz\nabla_h\\\mathcal{D}
      \end{smallmatrix}
\right)\\0_3
    \end{pmatrix},
  \end{aligned}
\end{equation}
where the vectors have been normalized by setting the first component of $\vV_\alpha$ to the identity operator~$1$, and factors of $\Deltainv$ have
been included so that if $(\mathcal{A},\mathcal{B},\mathcal{C},\mathcal{D})$ are all homogeneous order zero Fourier multipliers then all components of
$\vV_{\alpha}$ and $\vV_{\beta}$ will also be Fourier multipliers of order zero.  Furthermore, in the limit as $\mu\to0$ the operator
$-\pz \mathcal{Q}$ should tend to the operator $-\pz$ appearing in $\mathbb{P}^0\cLM\mathbb{P}^0$ in \eqref{eq:p0mp0}, i.e., $\mathcal{Q}$ should
tend to one.

\begin{lem}\label{lem:exacti}
  The vectors $\vV_\alpha$ and $\vV_{\beta}$ defined in \eqref{eq:vabdef} satisfy \eqref{eq:vabq} with
  \begin{equation}
    \label{eq:qdef}
    \begin{aligned}
      &\mathcal{Q}=\sqrt{2\left( 1+\mu^2+\sqrt{( 1+\mu^2)^2-4\mu^2\pz^2\Deltainv}\right)^{-1}},
      %\\&
      \quad\text{i.e.,}\quad
    \widehat{\mathcal{Q}}=\tfrac{\sqrt2}{\sqrt{1+\mu^2+\sqrt{(1+\mu^2)^2-4\mu^2\frac{m^2}{k^2+l^2+m^2}}}},
  \end{aligned}
\end{equation}
    provided that 
  \begin{equation}
    \label{eq:abphipsi}
    \mathcal{C}=-\mathcal{Q} \left( 1-\mu^2\pz^2\Deltainv \mathcal{Q}^2\right)^{-1}, \quad \mathcal{A}=-\mathcal{C}\mathcal{Q}^{-1},
    \quad \mathcal{B}=\pz^2\Deltainv \mathcal{C}\mathcal{Q},  \quad \mathcal{D}=\mathcal{Q}^{-1}.
  \end{equation}
\end{lem}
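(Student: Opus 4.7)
The plan is to reduce the operator identities \eqref{eq:vabq} to algebraic identities on each Fourier mode. Since $\cLA,\cLM$ are constant-coefficient first-order differential operators and every component of $\vV_\alpha,\vV_\beta$ in \eqref{eq:vabdef} is by construction a Fourier multiplier, testing against an arbitrary $\widetilde{\vV}$ in \eqref{eq:vabq} reduces to the vanishing of the two vector-valued Fourier multipliers $(\cLA+\mu\cLM)\vV_\alpha+\mu\mathcal{Q}\pz\vV_\beta$ and $(\cLA+\mu\cLM)\vV_\beta+\mu\mathcal{Q}\pz\vV_\alpha$, which in turn becomes, on each wave number $\xi=(k,l,m)$, a pair of algebraic identities among scalar symbols. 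I would compute the matrix symbols of $\hat{\cLA},\hat{\cLM}$ from \eqref{eq:largeops} via $\pz\mapsto im$, $\nabh\mapsto i(k,l)^\top$, $\Deltainv\mapsto -|\xi|^{-2}$; in particular $\pz^2\Deltainv$ has the positive symbol $m^2/|\xi|^2$, the quantity playing the role of the operator appearing throughout \eqref{eq:qdef}--\eqref{eq:abphipsi}.

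With the symbols of $\vV_\alpha,\vV_\beta$ written out componentwise, five of the ten scalar component equations collapse to $0=0$ because of the vanishing entries in \eqref{eq:vabdef} (the $r,\bh,b_3$-components of the $\alpha$-identity and the $\uh,u_3$-components of the $\beta$-identity). The remaining five carry the content. The $\uh$- and $u_3$-components of the $\alpha$-identity directly yield
\[
\mathcal{B}=\pz^2\Deltainv\,\mathcal{Q}\mathcal{C},\qquad \mathcal{D}=\mathcal{Q}^{-1},
\]
while the $\bh$-component of the $\beta$-identity gives $\mathcal{A}=-\mathcal{C}\mathcal{Q}^{-1}$. Substituting $\mathcal{A}$ and $\mathcal{B}$ into the $b_3$-component of the $\beta$-identity produces a cancellation of the $\pz^2\Deltainv$ contributions that leaves $\mathcal{C}(1-\mu^2\pz^2\Deltainv\,\mathcal{Q}^2)=-\mathcal{Q}$, which is the fourth formula of \eqref{eq:abphipsi}.

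The sole remaining equation, the $r$-component of the $\beta$-identity, is the compatibility condition that pins down $\mathcal{Q}$ itself. Substituting the four relations just derived collapses it to the single symbol-level identity $\mu^2\pz^2\Deltainv\,\mathcal{Q}^4-(1+\mu^2)\mathcal{Q}^2+1=0$, a quadratic in $\mathcal{Q}^2$. Of its two roots only the one with the minus sign in front of the radical remains bounded, and in fact tends to $1$, as $\mu\to 0$, consistent with the requirement noted after \eqref{eq:vabdef} that $\mathcal{Q}\to 1$; rationalizing the denominator of that root converts it into exactly the expression \eqref{eq:qdef} for $\widehat{\mathcal{Q}}$.

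The main place where an error could hide is the tracking of the various factors of $i$ and the cancellation in the $b_3$-equation that decouples $\mathcal{C}$ from the other multipliers; that cancellation is also what makes the overall system of five nontrivial equations for five unknown multipliers consistent, and constitutes the real content of the lemma.
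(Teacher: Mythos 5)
Your proposal is correct and follows essentially the same route as the paper: substitute the ansatz \eqref{eq:vabdef} into \eqref{eq:vabq}, read off the five nontrivial scalar multiplier relations (the paper's \eqref{eq:needz2}), solve four of them for $\mathcal{A},\mathcal{B},\mathcal{C},\mathcal{D}$, and use the remaining one as the compatibility condition for $\mathcal{Q}$; your cleared quadratic $\mu^2\pz^2\Deltainv\mathcal{Q}^4-(1+\mu^2)\mathcal{Q}^2+1=0$ is algebraically equivalent to the paper's form $\mathcal{Q}^{-1}-\mathcal{Q}-\mu^2\Delta_h\Deltainv\mathcal{Q}(1-\mu^2\pz^2\Deltainv\mathcal{Q}^2)^{-1}=0$ upon using $\Delta_h\Deltainv=1-\pz^2\Deltainv$, and rationalizing the bounded root indeed gives \eqref{eq:qdef}. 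One trivial slip: the relation $\mathcal{C}(1-\mu^2\pz^2\Deltainv\mathcal{Q}^2)=-\mathcal{Q}$ is the \emph{first} formula listed in \eqref{eq:abphipsi}, not the fourth.
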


\begin{proof}
  Substituting \eqref{eq:vabdef} into \eqref{eq:vabq} yields 
  \begin{equation}
    \label{eq:needz}
    \begin{aligned}
    &\begin{pmatrix}
      0\\\left(\begin{smallmatrix}-\mu^3 (\mathcal{B}-\mathcal{C} \mathcal{Q}\pz^2\Deltainv)\nabh\\-\mu\pz(1-\mathcal{D}\mathcal{Q})\end{smallmatrix}\right)\\0_3
    \end{pmatrix}=\boldzero=
    %\\& % \;% \qquad
    \begin{pmatrix}
      -\mu\pz(\mathcal{D}-\mathcal{Q}+\mu^2\mathcal{C}\Delta_h\Deltainv)\\0_3\\\mu^2\pz^2\Deltainv(\mathcal{C}+\mathcal{A}\mathcal{Q})\nabla
      -\left(\begin{smallmatrix}0\\0\\\mu^2\pz(\mathcal{C}+\mathcal{Q}-\mu^2\mathcal{B}\mathcal{Q})\end{smallmatrix}\right)
    \end{pmatrix},
    %=\boldzero,
  \end{aligned}
\end{equation}
  which will hold provided that
  \begin{equation}
    \label{eq:needz2}
    \begin{aligned}
    &\mathcal{C}+\mathcal{A}\mathcal{Q}=0,\quad    \mathcal{D}\mathcal{Q}=1,\quad   \mathcal{B}=\mathcal{C}\mathcal{Q}\pz^2\Deltainv,
    \quad  \mathcal{C}+\mathcal{Q}=\mu^2\mathcal{B}\mathcal{Q},
    %\\& %
    \quad
    \mathcal{D}-\mathcal{Q}+\mu^2\Delta_h\Deltainv \mathcal{C}=0.
  \end{aligned}
\end{equation}
Solving the first three equations  in \eqref{eq:needz2} for $\mathcal{A}$, $\mathcal{D}$, and $\mathcal{B}$ yields the formulas for those operators
claimed in \eqref{eq:abphipsi}. Substituting those  formulas  into the fourth equation in \eqref{eq:needz2} and solving the result for $\mathcal{C}$
yields the formula for that operator in \eqref{eq:abphipsi}. Substituting the formulas obtained so far 
into the last equation in \eqref{eq:needz2} yields
 $\mathcal{Q}^{-1}-\mathcal{Q} -\mu^2\Delta_h\Deltainv  \mathcal{Q}(1-\mu^2\pz^2\Deltainv \mathcal{Q}^2 )^{-1}=0$,
 whose only solution tending to one as $\mu\to0$ is~\eqref{eq:qdef}.
\end{proof}

We now turn to explicating the form that initial data satisfying \eqref{eq:wellprep} takes.
\begin{lem}\label{lem:initform}
  Initial data $\vV^0=(r^0,\vu^0,\vb^0)$
will be uniformly bounded in $H^n$ and satisfy the constraint $\nc\vb^0=0$ and the condition \eqref{eq:wellprep} iff it has the form
  \begin{equation}
    \label{eq:initfis}
    \begin{aligned}
      r^0&=r^{0,S}+\epM  r^{0,I}, \quad \uh^0=\uh^{0,S}+\epA \uh^{0,F}, \; u_3^0=u_3^{0,S}+\epM u_3^{0,I},
      \\ \bh^0&=\bh^{0,S}+\epA \bh^{0,F}, \;
      b_3^0=\avv b_3^0 -\mu (1-\avv) r^{0,S}+\epA b_3^{0,F}, 
    \end{aligned}
  \end{equation}
  where every term $w^{0,\ell}$  with $w\in \{r,\uh,u_3,\bh,b_3\}$ and $\ell\in\{F,I,S\}$ has the form specified for the $w$ component
  of the $\ell$ mode in \eqref{eq:modeforms},
  and may depend on $(\epA,\epM)$ but satisfies
  \begin{equation}
    \label{eq:initfisbnd}
    \begin{aligned}
    \|r^{0,S}\|_n&+\left[ \|\pz r^{0,I}\|_{n-1}+\epM\|r^{0,I}\|_n\right]
    \\&\;
    +\|\uh^{0,S}\|_n+\left[\|\pz \uh^{0,F}\|_{n-1}+\|\nch\uh^{0,F}\|_n+\epA\|\uh^{0,F}\|_n\right]
    %\\&\;
    +\|u_3^{0,S}\|_n+\left[\|\pz u_3^{0,I}\|_{n-1}+\epM\| u_3^{0,I}\|_n\right]
    \\&\;
    +\|\bh^{0,S}\|_n+\left[\|\pz \bh^{0,F}\|_{n-1}+\epA\|\bh^{0,F}\|_n\right]
    %\\&\;
    +|\avv b_3^0|+\|b_3^{0,F}+\Deltainv\Delta_h r^{0,I}\|_n\le c
  \end{aligned}
\end{equation}
uniformly in those parameters, and
\begin{equation}\label{eq:divb00}
  \nch\bh^{0,S}=0=\nch \bh^{0,F}+\pz b_3^{0,F}.
\end{equation}
\end{lem}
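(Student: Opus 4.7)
The plan is to establish both directions of the equivalence using the mode formulas \eqref{eq:modeforms} and the norm equivalence \eqref{eq:fifromlarge} from Lemma~\ref{lem:bdiv0}. Throughout, it will be convenient to rewrite condition \eqref{eq:wellprep} as $\|(\cLA+\mu\cLM)\vV^0\|_{n-1}\le c\,\epA$, using $\epA^{-1}\cLA+\epM^{-1}\cLM=\epA^{-1}(\cLA+\mu\cLM)$.

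For the ``only if'' direction, I would extract the decomposition from $\vV^0$ via the projections \eqref{def:P0}, setting $r^{0,S}:=\az r^0$, $r^{0,I}:=\epM^{-1}(1-\az)r^0$, $\uh^{0,S}:=\cPdh\az\uh^0$, $\uh^{0,F}:=\epA^{-1}(1-\cPdh\az)\uh^0$, and analogously for $u_3$ and $\bh$. The $b_3$ piece is then forced by \eqref{eq:initfis} to be $b_3^{0,F}:=\epA^{-1}\bigl[(1-\avv)b_3^0+\mu(1-\avv)r^{0,S}\bigr]$. By construction each $w^{0,\ell}$ has the form listed in \eqref{eq:modeforms}. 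Constraint \eqref{eq:divb00} follows from $\nc\vb^0=0$ together with $\pz r^{0,S}=0$: applying $\az$ to the divergence yields $\nch\bh^{0,S}=-\az\pz b_3^0=0$, and the second equality in \eqref{eq:divb00} reduces to $\epA^{-1}(\nch\bh^0+\pz b_3^0)=0$. For the bounds \eqref{eq:initfisbnd}, the slow pieces are controlled trivially by $\|\vV^0\|_n\le c$, while the ``hard'' terms involving $\pz$, $\nch$, and $b_3^{0,F}+\Deltainv\Delta_h r^{0,I}$ match the terms on the right side of \eqref{eq:fifromlarge} at order $j=n-1$ (up to factors of $\epA$), so dividing \eqref{eq:fifromlarge} by $\epA$ and invoking \eqref{eq:wellprep} yields the required bounds.

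For the ``if'' direction, I would start from the ansatz \eqref{eq:initfis} and read off the actual F/I/S components of $\vV^0$ from \eqref{eq:modeforms}: the fast components become $\epA\uh^{0,F}$, $\epA\bh^{0,F}$, etc., and the intermediate components $\epM r^{0,I}$, $\epM u_3^{0,I}$. Plugging these into the right side of \eqref{eq:fifromlarge} and using \eqref{eq:initfisbnd} produces an overall factor $\epA$, which verifies \eqref{eq:wellprep}. The divergence-free condition $\nc\vb^0=0$ then follows immediately from \eqref{eq:divb00} together with $\pz\bh^{0,S}=0=\pz r^{0,S}$.

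The main obstacle in both directions is the tangled structure of the $b_3$ component, which is not a simple slow$+$fast split but includes the cross term $-\mu(1-\avv)r^{0,S}$. The need for this term reflects the fact that the null space of $\epA^{-1}\cLA+\epM^{-1}\cLM$ characterized in item~2 of Lemma~\ref{lem:bdiv0} enforces the relation $b_3=\avv b_3-\mu(r-\avv r)$, so ``well-preparedness up to $O(\epA)$'' imposes the same coupling on the initial data. Concretely, the term is chosen so that the key identity
\[
b_3^F+\mu\Deltainv\Delta_h r\;=\;\epA\bigl(b_3^{0,F}+\Deltainv\Delta_h r^{0,I}\bigr)
\]
holds, which hinges on the subsidiary observation $\Deltainv\Delta_h r^{0,S}=(1-\avv)r^{0,S}$ valid for $z$-independent $r^{0,S}$ (since $\Delta$ and $\Delta_h$ agree on such functions modulo the constant mean annihilated by $\Deltainv$). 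Once this cancellation is in place, everything else is bookkeeping.
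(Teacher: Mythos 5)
Your proposal is correct and follows essentially the same route as the paper: decompose $\vV^0$ into modes via the projections \eqref{def:P0}, use orthogonality of modes in $H^n$ to control the factors $\epM\|r^{0,I}\|_n$, $\epA\|\uh^{0,F}\|_n$, etc.\ by $\|\vV^0\|_n\le c$, and use \eqref{eq:fifromlarge} at $j=n-1$ (divided by $\epA$) to translate \eqref{eq:wellprep} into the remaining bounds. The cancellation identity $b_3^F+\mu\Deltainv\Delta_h r^0=\epA\bigl(b_3^{0,F}+\Deltainv\Delta_h r^{0,I}\bigr)$ that you isolate, resting on $\Deltainv\Delta_h r^{0,S}=(1-\avv)r^{0,S}$ for $z$-independent $r^{0,S}$, is exactly the observation the paper states more implicitly when it notes that $-\epM^{-1}(1-\avv)r^{0,S}+\epM^{-1}\Deltainv\Delta_h r^{0,S}$ vanishes identically; your phrasing makes the reason for the $-\mu(1-\avv)r^{0,S}$ term in \eqref{eq:initfis} slightly more transparent. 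One small omission: the paper also records (and you gloss over) that $\epA\|b_3^{0,F}\|_n$ need not appear in \eqref{eq:initfisbnd} because uniform control of $\|b_3^{0,F}+\Deltainv\Delta_h r^{0,I}\|_n$ together with $\epM\|r^{0,I}\|_n\le c$ already gives $\epM\|b_3^{0,F}\|_n\le c$, whereas $\Deltainv\Delta_h r^{0,I}$ cannot be dropped since only $\pz r^{0,I}$ is $O(1)$; you would need this remark to close the equivalence rather than merely the one-sided implication.
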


\begin{proof}
  Since the terms in \eqref{eq:initfis} are allowed to depend on $\epA$ and $\epM$, that formula simply expresses the separation of the
  initial data into fast, intermediate, and slow modes, with the factors of $\epA$ and $\epM$  and the inclusion of the
  specific term $-\mu(1-\avv)r^{0,S}$   being purely for later convenience.
  By Lemma~\ref{lem:bdiv0}, the condition $\nc\vb=0$ implies that each mode is divergence-free, and the conditions $\nc\vb^\ell=0$ for $\ell\in\{F,I,S\}$
  clearly imply that $\nc\vb=0$, so for initial data of the form~\eqref{eq:initfis} the conditions \eqref{eq:divb00}
  are equivalent to the assumed condition that $\nc\vb^0=0$.  
  
  Since, as shown above, the square of the $H^n$ norm of $\vV^0$ equals the sum of the
  squares of the $H^n$ norms of its modes, the assumed uniform boundedness of $\|\vV^0\|_n$ is equivalent to
  \begin{equation}\label{eq:hninit}
    \begin{aligned}
      \|r^{0,S}\|_n&+\epM\|r^{0,I}\|_n+\|\uh^{0,S}\|_n+\epA\|\uh^{0,F}\|_n+\|u_3^{0,S}\|_n+\epM\| u_3^{0,I}\|_n
      \\&+\|\bh^{0,S}\|_n+\epA\|\bh^{0,F}\|_n + |\avv b_3^0|+\|-\mu(1-\avv)r^{0,S}+\epA b_3^{0,F}\|_n\le c.
    \end{aligned}
  \end{equation}
  For $k=n-1$ the sum of terms estimated in \eqref{eq:fifromlarge}  is equivalent to the $H^{n-1}$ norm of $(\cLA+\mu\cLM)\vV$.
Hence the assumed uniform boundedness of $\fr{\epA}\|(\cLA+\mu\cLM)\vV^0\|_{n-1}$ becomes, for initial data $\vV^0$ of the form \eqref{eq:initfis}
satisfying \eqref{eq:divb00},
\begin{equation}
  \label{eq:epainvinit}
  \begin{aligned}
    \|\pz \uh^{0,F}\|_{n-1}&+\|\nch \uh^{0,F}\|_{n}+\|\pz(\bh^{0,F}-\Deltainv\nabh\pz r^{0,I})\|_{n-1}
    +  \|\pz r^{0,I}\|_{n-1}
    \\&+\|\pz u_3^{0,I}\|_{n-1} 
  +\|-\epM^{-1}(1-\avv)r^{0,S}+b_3^{0,F}
  +\epM^{-1}\Deltainv\Delta_h r^{0,S}
  +\Deltainv\Delta_h r^{0,I}\|_{n}
    \le c.
  \end{aligned}
\end{equation}
Since the expression $-\mu(1-\avv)r^{0,S}$ appearing in the last term in \eqref{eq:hninit} can be estimated by the first term there it can be
omitted from that last term, leaving $\epA\|b_3^{0,F}\|_n$.  Similarly, the expression $-\pz(\Deltainv\Delta_h)\pz r^{0,I}$ in the third term of
\eqref{eq:epainvinit} can be omitted.  Also the expression
$-\epM^{-1}(1-\avv)r^{0,S}+\epM^{-1}\Deltainv\Delta_h r^{0,S}$ in the last term of \eqref{eq:epainvinit} vanishes identically because $r^{0,S}$
is independent of $z$, leaving $\|b_3^{0,F}+\Deltainv\Delta_h r^{0,I}\|_{n}$.  The uniform bounds for that term and for $\epM \|r^{0,I}\|_n$
from \eqref{eq:hninit} imply the uniform boundedness of $\epM\|b_3^{0,F}\|_n$, so the modified term $\epA\|b_3^{0,F}\|_n$ for \eqref{eq:hninit}
can be omitted. However, since \eqref{eq:epainvinit} only contains an $O(1)$ estimate for $\pz r^{0,I}$ not $r^{0,I}$ itself, it is not possible
to omit the expression $\Deltainv\Delta_h r^{0,I}$ from the term $\|b_3^{0,F}+\Deltainv\Delta_h r^{0,I}\|_{n}$. Adding \eqref{eq:hninit} and
\eqref{eq:epainvinit} and making the above-mentioned modifications shows that \eqref{eq:initfisbnd} is equivalent to the uniform boundedness of
$\|\vV^0\|_{n}+\fr{\epA}\|(\cLA+\mu\epM)\vV^0\|_{n-1}$ for initial data of the form \eqref{eq:initfis}.
\end{proof}
We note that, by a slight notational exception, the fast part of $b_3^0$ in \eqref{eq:initfis} is
\begin{equation}\label{eq:b30F}
(b_3^0)^F=-\mu(1-\avv)r^{0,S}+\epA b_3^{0,F},
\end{equation}
showing that, to the leading $O(\mu)$ order, $(b_3^0)^F$ only depends on the slow part of $r^0$.

Although Lemma~\ref{lem:initform} determines the most general initial data satisfying the conditions needed for the existence and convergence results, as noted
in the introduction we need more to obtain the rate of convergence. In order to allow the initial data to contain all
modes but not interfere with the convergence rate, we will still assume that the initial data have the
form \eqref{eq:initfis}, and that \eqref{eq:divb00} holds, but will assume that
\begin{equation}\label{eq:initfixed}
  \text{all terms $w^{0,\ell}$ in \eqref{eq:initfis} are uniformly 
    bounded in  $H^n$ and each term $w^{0, S}$  is fixed,}
\end{equation}
which automatically implies that \eqref{eq:initfisbnd} holds.  We allow the $w^{0,F}, w^{0,I}$ to depend on $(\epA,\epM)$
because we only bound the distance of the fast and intermediate modes to zero (i.e., estimate their Sobolev norms), not their distance to any non-trivial limits.

\section{Convergence Rate Estimates}\label{sec:rate}

Recall that ``static'' estimates are obtained by solving the PDE~\eqref{MHDsy} for certain terms and bounding the norms of the result via
\eqref{eq:uniform}, while ``dynamic'' estimates are obtained via energy estimates for the time evolution. 
We will estimate the size of the fast
  modes statically, the size of the intermediate modes statically and then dynamically, and the difference between the
  slow modes and the solution of the limit system dynamically, with earlier estimates 
  used when deriving later ones. To optimize the use of earlier estimates we use the interpolation estimate \eqref{eq:vtinterp} to
  obtain smaller estimates in weaker norms.
  Recall that $\mu=\frac{\epA}{\epM}=\epM^\nu$ is assumed to be less than one. 
 To see easily how the estimate to be obtained depends on the norm used, we  
 introduce an increasing geometric sequence
  \begin{equation}\label{eq:epjdef}
  \ep_j:=\epM^{1+\nu-j\nu}
\end{equation}
{ so that by \eqref{eq:epAepMgen}, \eqref{eq:nudef}, \; $\ep_0=\epA,\;\ep_1=\epM,\;\ep_{j}=\mu\ep_{j+1},\;\ep_n\le c$.}

 \subsection{Static estimates}

  Static estimates will be obtained by solving the PDE system for  $(\cLA +\mu\cLM)\vV$ and using the uniform bounds
  \eqref{eq:uniform} together with the standard Sobolev product and
  composition estimates 
  \begin{align}
    \label{eq:sobprod}
    \|fg\|_j&\le c\|f\|_{n-1}\|g\|_j, \quad j=0,\ldots,n-1
    \\\label{eq:sobfunct}
    \|F(g)\|_{n-1}&\le C(\|g\|_{n-1}),
  \end{align}
  which will all be used henceforth without mention,
  plus the interpolation estimate
  \eqref{eq:vtinterp}. We will also need an estimate for the time integral of certain fast terms, which will be obtained similarly from the
 time integral of the PDE.  
\begin{thm}\label{thm:fast}
  Assume that the basic conditions of Theorem~\ref{thm:theom1} hold. Then
    the fast component $\vV^F$ satisfies the estimates 
\begin{align}
  \label{eq:fastest}
  \begin{aligned}
    \sup_{t\in[0,T]}\big[\big\|\uh^F\big\|_{j}&+ \big\|\nch\uh^F\big\|_{j}
+ {\big\|\pz\vu^F\big\|_j+\big\|\pz\vb^F\big\|_j+\big\| \bh^F\big\|_j+\big\|(1-\az) b_3^F\big\|_j}
\\&{+ \|\az\uh^F\|_{j+1}}+\|b_3^F+\mu \Deltainv\Delta_h  r\|_{j+1}\big]\le 
C\,{\ep_j,\quad j=0\ldots n-1},
\end{aligned}
  \\
\label{TA:epA2}
  \begin{aligned}
    \sup_{t\in[0,T]} \Big[ &\Big\|\int_0^t \az\uh^F\,dt'\Big\|_{n} +  \Big\|\int_0^t \uh^F\,dt'\Big\|_{n-1} +  \Big\|\int_0^t \nch\uh^F\,dt'\Big\|_{n-1}
    \\&
    +  \Big\|\int_0^t \az\left(  b_3^F+\mu \Deltainv\Delta_h  r^S  \right)\,dt'\Big\|_{n}  \Big]
\le C(T) \,\epM^{1+\nu},
\end{aligned}
%\\\label{eq:intpzuhfest}
%  \sup_{t\in[0,T]} \Big\|\int_0^t \pz \uh^F\,dt'\Big\|_{n-2}\le C(T)\,\epM^{1+2\nu},
\end{align}
and the intermediate component $(r^I,u_3^I)=((1-\az)r,(1-\az)u_3)$ satisfies % the {\bf static estimates}
\begin{equation}\label{eq:inteststat}
    \|(r^I,u_3^I)\|_{j}\le c\|(\pz r^I,\pz u_3^I)\|_{j}\le c\, \ep_{j+1}=\dfrac{c\,\ep_j}{\mu},\quad j=0\ldots n-1.
\end{equation}
\end{thm}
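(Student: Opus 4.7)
The strategy is to derive everything from solving the symmetric hyperbolic system~\eqref{MHDsy} for the large operator,
\begin{equation*}
(\cLA+\mu\cLM)\vV \;=\; \epA\Bigl[A_0(\epM\vV)\vV_t + \sum_{i} A_i(\vV)\vV_{x_i}\Bigr],
\end{equation*}
and then applying the equivalence~\eqref{eq:fifromlarge} of Lemma~\ref{lem:bdiv0}(6), which converts an $H^j$ bound on the left-hand side into bounds on precisely the fast and intermediate ingredients that appear in~\eqref{eq:fastest}--\eqref{eq:inteststat}. The same identity, integrated in time, will produce~\eqref{TA:epA2}.

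For the static estimates, the $H^j$ norm of the right-hand side above is bounded by the Moser-type product rule~\eqref{eq:sobprod}, the uniform bound $\|\vV\|_n\le K$ from~\eqref{eq:uniform}, and the interpolation estimate~\eqref{eq:vtinterp}, which gives $\|\vV_t\|_j\le C\mu^{-j}$ (obtained by interpolating between $\|\vV_t\|_0\le K$ and $\mu^{n-1}\|\vV_t\|_{n-1}\le K$). Since $\mu\le 1$ this yields $\|(\cLA+\mu\cLM)\vV\|_j\le C\epA\mu^{-j}=C\ep_j$ for $j=0,\ldots,n-1$. Substituting into~\eqref{eq:fifromlarge} immediately delivers $\|\pz\uh^F\|_j$, $\|\nch\uh^F\|_j$, $\|b_3^F+\mu\Deltainv\Delta_h r\|_{j+1}\le C\ep_j$ and the intermediate-mode bounds $\mu\|\pz r^I\|_j,\mu\|\pz u_3^I\|_j\le C\ep_j$, the latter giving~\eqref{eq:inteststat} after applying Poincar\'e to the $z$-mean-zero functions $r^I$ and $u_3^I$. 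The remaining terms in~\eqref{eq:fastest} are reconstructed as follows: $\|\uh^F\|_j$ and $\|\bh^F\|_j$ split into a $z$-mean-zero piece handled by Poincar\'e plus an $\az$ piece controlled via $\az(I-\cPdh)\uh=\nabh\Delta_h^{-1}\az\nch\uh$, which actually gains a derivative and yields the $H^{j+1}$ bound on $\az\uh^F$; the bound on $\pz u_3$ uses $\pz u_3=\nc\vu-\nch\uh$; and $\|\pz b_3^F\|_j=\|\pz b_3\|_j=\|\nch\bh\|_j$ (from $\nc\vb=0$) is extracted from $\|b_3^F+\mu\Deltainv\Delta_h r\|_{j+1}$, after verifying via the symbol bound $|m(k^2+l^2)/(k^2+l^2+m^2)|\le |m|$ that the parasitic term satisfies $\mu\|\pz\Deltainv\Delta_h r^I\|_j\le c\mu\|\pz r^I\|_j\le c\ep_j$.

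For the time-integrated estimates, integrating the PDE identity from $0$ to $t$ and integrating by parts in the time-derivative term yields
\begin{equation*}
\int_0^t(\cLA+\mu\cLM)\vV\,dt' \;=\; \epA\bigl[A_0(\epM\vV)\vV\bigr]_0^t - \epA\epM\!\int_0^t A_0'(\epM\vV)\vV_t\cdot\vV\,dt' + \epA\!\int_0^t \sum_i A_i(\vV)\vV_{x_i}\,dt'.
\end{equation*}
The boundary term is $O(\epA)$ in $H^n$ by~\eqref{eq:uniform}, and the convection integral is $O(T\epA)$ in $H^{n-1}$ by~\eqref{eq:sobprod}. The main obstacle is the remainder integral containing $\vV_t$: using $\|\vV_t\|_{n-1}\le C\mu^{-(n-1)}$ it is $O\bigl(\epA\epM\mu^{-(n-1)}\bigr)$ in $H^{n-1}$, and the scaling hypothesis $\epA\ge c\,\epM^{1+1/(n-1)}$ in~\eqref{eq:epAepMgen} is precisely what forces $\epA\epM\mu^{-(n-1)}=\epM^n/\epA^{n-2}\le C\epA$. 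Thus $\|\int_0^t(\cLA+\mu\cLM)\vV\,dt'\|_{n-1}\le C(T)\epA$, and applying~\eqref{eq:fifromlarge} at $j=n-1$ to $\int_0^t\vV\,dt'$, which commutes with time integration and with the orthogonal mode projections, yields all four bounds in~\eqref{TA:epA2}; the substitution $\az(b_3^F+\mu\Deltainv\Delta_h r^S)=\az(b_3^F+\mu\Deltainv\Delta_h r)$ needed to match the quantity appearing in~\eqref{eq:fifromlarge} follows from the Fourier-multiplier identity $\az\Deltainv\Delta_h r^I=0$, since $r^I=(1-\az)r$ has no $m=0$ modes.
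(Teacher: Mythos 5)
Your proof is correct and follows essentially the same strategy as the paper: derive $\|(\cLA+\mu\cLM)\vV\|_j\le c\,\ep_j$ from the PDE, the uniform bounds, and the interpolation estimate~\eqref{eq:vtinterp}; feed that into the equivalence~\eqref{eq:fifromlarge}; reconstruct the remaining components of~\eqref{eq:fastest} via Poincar\'e and the elliptic gain in $\nabh\Delta_h^{-1}\az\nch\uh$; and obtain~\eqref{TA:epA2} by time-integrating the PDE, integrating by parts in the $A_0\vV_t$ term, and then applying~\eqref{eq:fifromlarge} to time integrals. The only genuine difference is in the bound for the remainder integral $\epA\int_0^t(\epM\vV_t\cdot\nabla_{\vV}A_0)\vV$: you estimate it by the generic interpolation bound $\|\vV_t\|_{n-1}\le c\mu^{-(n-1)}$, giving $O(T\epA\epM\mu^{-(n-1)})$, and then invoke the scaling hypothesis~\eqref{eq:epAepMgen} to reduce this to $O(T\epA)$; the paper instead exploits the MHD-specific fact that only $r$ enters the argument of $A_0$ and that the continuity equation~\eqref{MHDsy:r} yields $\|\pt r\|_{n-1}=O(\epM^{-1})$, so $\epM\vV_t\cdot\nabla_{\vV}A_0$ is uniformly bounded and the remainder is $O(T\epA)$ directly, without needing~\eqref{eq:epAepMgen} at this step. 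Both arrive at the same estimate since~\eqref{eq:epAepMgen} is assumed anyway, though the paper's version is slightly sharper in spirit.
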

Note that the bounds in \eqref{TA:epA2} for the time integrals of fast components are smaller than the bounds in \eqref{eq:fastest} for those
components themselves.
%, a phenomenon that seems to have first been utilized in the two-scale case in \cite{KLN91} and is of independent interest,
%although only the estimates for
%the first and last expressions on the left side of \eqref{TA:epA2} will be used below.
%Also note that while \eqref{eq:fastest} is in the fashion of ``interpolative estimates'', the time-integral estimates \eqref{TA:epA2}
%can not be improved by reducing the Sobolev norm index, to the best of our knowledge.

\begin{proof}  
Taking the $H^{k}$ norm of both sides of $\epA$ times \eqref{eq:general} and using the 
interpolation bounds \eqref{eq:vtinterp}  to estimate the left side shows that
\begin{equation}
   \label{eq:clamuclmest}
   \|(\cLA+\mu\cLM)\vV\|_{j}\le c\, \epM^{1+\nu-j\nu}=c\,\ep_j. 
 \end{equation}
Combining \eqref{eq:clamuclmest}, 
% the estimate
\eqref{eq:fifromlarge} %for intermediate components of the solution in terms of $(\cLA+\mu\cLM)\vV$
and the Poincar\'e inequality
\begin{equation}
  \label{eq:poincare}
  \|(1-\az)f\|_j\le c \|\pz f\|_j,
\end{equation}
yields \eqref{eq:inteststat}.
By \eqref{eq:fifromlarge}, \eqref{eq:clamuclmest}, and the fact that $u_3^F\equiv0$, 
 \begin{equation}
   \label{eq:b3fest2}
 \|\pz\vu^F\|_j+  \|\nch\uh^F\|_j+\|b_3^F+\mu \Deltainv\Delta_h  r\|_{j+1}\le c\, \ep_j, \quad j=0,\ldots, n-1.
 \end{equation}
Combining  the definition \eqref{eq:modeforms} of the fast modes, the Poincar\'e inequality \eqref{eq:poincare},
the second inequality of \eqref{eq:fifromlarge}, and \eqref{eq:clamuclmest} yields
\begin{equation}
  \label{eq:best1}
  \begin{aligned}
    \big[\|&\pz \vb^F\|_j+\|\bh^F\|_j+\|(1-\az)b_3^F\|_j\big]
    \le c\|\pz\vb^F\|_j 
    \\&\le c\big[ \|\pz(\bh^F-\mu\Deltainv\nabh\pz r^I)\|_j+\mu\|\pz r^I\|_j+\|\pz(b_3^F+\mu\Deltainv\Delta_h r)\|_j+\mu\|\pz r^I\|_j\big]
    %\\& \le c \big[  \|\pz(\bh^F-\mu\Deltainv\nabh\pz r^I)\|_j+\mu\|\pz r^I\|_j+\|b_3^F+\mu\Deltainv\Delta_h r\|_{j+1}+\mu\|\pz r^I\|_j\big]
\\&\le c\|(\cLA+\mu\cLM)\vV\|_j\le c\,\ep_j, \qquad\qquad j=0,\ldots,n-1.
\end{aligned}
\end{equation}
By the definition \eqref{eq:modeforms} of the fast modes 
\begin{equation}
  \label{eq:uhf}
    \uh^F=(1-\az)\uh^F+\az\uh^F=(1-\az)\uh^F +\nabh\Delta_h^{-1}\az(\nch\uh^F).
  \end{equation}
Note that $\Delta_h$ is elliptic when applied to functions independent of $z$ so
\begin{equation}
  \label{eq:deltaellip}
  \|\nabh\Delta_h^{-1}\az f\|_{j+1}\le c\|f\|_j.
\end{equation}
By \eqref{eq:uhf}, \eqref{eq:poincare}, \eqref{eq:deltaellip},
\eqref{eq:fifromlarge}, and \eqref{eq:clamuclmest},
\begin{equation}
  \label{eq:uhfest}
  \begin{aligned}
  \|\uh^F&\|_{j}+\|\az \uh^F\|_{j+1}\le \|(1-\az)\uh^F\|_{j} +\|\nabh\Delta_h^{-1}\az(\nch\uh^F)\|_{j+1}
  \\&\le c\|\pz\uh^F\|_{j}+c\|\nch\uh^F\|_{j}\le c \|(\cLA+\mu\cLM)\vV\|_{j}
  %\\&
  \le c\, \ep_j \quad j=0,\ldots,n-1.
\end{aligned}
\end{equation}
Combining \eqref{eq:b3fest2}, \eqref{eq:best1}, and \eqref{eq:uhfest} yields \eqref{eq:fastest}.

The bounds  on time  integrals in \eqref{TA:epA2} are obtained by integrating \eqref{eq:general}
with respect to time, which yields
\begin{equation}
  \label{eq:intclAetc}
  \int_0^t (\cLA+\mu\cLM)\vV=\epA\left( A_0(\epM\vV)\vV|_0^t -\int_0^t (\epM\vV_t\cdot\grad_{\vV} A_0)\vV+\int_0^t \sum_{i=1}^d A_i(\vV)\,\vV_{x_i}\right).
\end{equation}
For the MHD system \eqref{MHDsy}, only the variable $r$ appears in  the argument of $A_0$, and \eqref{MHDsy:r} shows that the time derivative of $r$
is $O(\epM^{-1})$, so the term $\epM\vV_t\cdot\grad_{\vV} A_0$ is uniformly bounded. This yields the estimate
\begin{equation}
  \label{eq:intclAetc2}
  \Big\|\int_0^t (\cLA+\mu\cLM)\vV\,dt'\Big\|_{n-1}\le c\,\epA=c\,\epM^{1+\nu}.
\end{equation}
Since spatial operators commute with time integration, replacing every solution component in the second inequality of \eqref{eq:fifromlarge} with its
time-integral from $0$ to $t$ and combining the result with the bound \eqref{eq:intclAetc2} yields \eqref{TA:epA2} since $\az r=r^S$.
\end{proof}

\subsection{Intermediate system dynamic estimates}

\begin{thm}\label{thm:iest}
    Assume that the conditions of the convergence part of Theorem~\ref{thm:theom1} hold, and let $(r,\vu,\vb)$ be the solution obtained for the
  MHD system~\eqref{MHDsy}.
  Then
  \begin{equation}
    \label{eq:intestdyn}
    \|r^I\|_j+\|u_3^I\|_j\le c\, \ep_{\max(j,1)}, \quad j=0,\ldots, n-1.
  \end{equation}
  % and
  % \begin{equation}
  %   \label{eq:intestzstat}
  %   \|\pz r^I\|_{n-1}+\|\pz u_3^I\|_{n-1}\le c\,\ep_n.
  % \end{equation}
\end{thm}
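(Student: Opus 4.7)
For $j=0$ the claim is exactly the static estimate \eqref{eq:inteststat}, so only the improvement by the factor $\mu$ for $1\le j\le n-1$ needs to be established. The idea is to use the exact intermediate-mode diagonalization from Lemma~\ref{lem:exacti}: set
\begin{equation*}
  \alpha\eqdef(1-\az)\vValpnodiv\cdot\vV,\qquad \beta\eqdef(1-\az)\vV_\beta\cdot\vV.
\end{equation*}
Using $\nc\vb=0$ and the explicit forms \eqref{eq:vabdef}, one finds $\alpha=r^I+O(\mu)\cdot(1-\az)b_3^F$ and $\beta=\mathcal{Q}^{-1}u_3^I+O(\mu^2)\cdot\Deltainv\pz\nch\uh^F$. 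Both correctors have $H^j$-norm bounded by $c\,\ep_j$ through \eqref{eq:fastest}, and $\mathcal{Q}$ is a self-adjoint uniformly bounded and uniformly invertible Fourier multiplier (cf.~\eqref{eq:qdef}), so the claim reduces to the dynamic bound $\|\alpha\|_j+\|\beta\|_j\le c\,\ep_j$.

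\textbf{Coupled system and energy estimate.} Left-multiplying the operator form \eqref{eq:general} of the MHD system by $\vValpnodiv$ and $\vV_\beta$, respectively, and applying $(1-\az)$, the identity \eqref{eq:vabq} of Lemma~\ref{lem:exacti} (which still applies because $\vValpnodiv-\vV_\alpha$ annihilates divergence-free~$\vb$) converts the $\tfrac1{\epA}(\cLA+\mu\cLM)\vV$ term into a coupling of size $\tfrac1{\epM}$ between $\alpha$ and $\beta$. The quasilinear prefactor $A_0(\epM\vV)$ is handled as in the proof of Theorem~\ref{thm:fast}: only $r$ enters $A_0$, so $A_0=I+O(\epM r)$ and $\pt A_0=\epM A_0'\pt r$ is uniformly bounded because the $\epM^{-1}$ in $\pt r$ from \eqref{MHDsy:r} cancels the $\epM$ prefactor. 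This yields
\begin{equation*}
  \pt\alpha+\tfrac1{\epM}\mathcal{Q}\pz\beta=\mathcal{F}_\alpha,\qquad \pt\beta+\tfrac1{\epM}\mathcal{Q}\pz\alpha=\mathcal{F}_\beta.
\end{equation*}
Since $\mathcal{Q}$ is self-adjoint, $\pz$ skew-adjoint, and both commute with any constant-coefficient $\pa^\sigma$, the singular $\tfrac1{\epM}$ couplings cancel pairwise in $\tfrac{d}{dt}(\|\alpha\|_j^2+\|\beta\|_j^2)$, and Gronwall reduces the task to bounding the initial data and $\mathcal{F}_\alpha,\mathcal{F}_\beta$ in $H^j$ by $c\,\ep_j$. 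Initial data are $O(\epM)\le c\,\ep_j$ by Lemma~\ref{lem:initform}, and the forcings are controlled by inspecting each term: the projection $(1-\az)$ annihilates all slow--slow products, so surviving pieces always contain either a fast-mode factor bounded by $\ep_j$ through \eqref{eq:fastest}, an intermediate-mode factor whose static $\ep_{j+1}$-bound is improved to $\ep_j$ by induction on~$j$, or a slow factor multiplied by a subleading $\mu$-piece of $\vValpnodiv$ or $\vV_\beta$.

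\textbf{Main obstacle.} The dangerous pieces in $\mathcal{F}_\alpha$ are the terms of the form $r^S\pz u_3^I$ (and symmetrically $r^S\pz r^I$ in $\mathcal{F}_\beta$), together with the $(A_0-I)(\cLA+\mu\cLM)\vV/\epA$ correction to the singular term, for which the static bound \eqref{eq:fifromlarge} alone yields only $\ep_{j+1}$---a factor of $\mu$ too large. Extracting the missing $\mu$ requires either exploiting the pure $z$-derivative structure (such terms can often be rewritten as $\pz$ of a bounded product and integrated by parts against the skew-adjoint $\pz$ in the energy inequality) or combining the sharp estimate $\mu\|\pz u_3^I\|_j+\mu\|\pz r^I\|_j\le c\,\ep_j$ from \eqref{eq:fifromlarge} with the explicit $O(\mu)$ factors carried by the subleading components of $\vValpnodiv$ and $\vV_\beta$. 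Carrying out this cancellation term by term---in particular verifying that the $(A_0-I)$-correction simplifies to the uniformly bounded quasilinearity $(a(\epM r)-1)\nc\vu/\epM$ because the $r$-row of $(\cLA+\mu\cLM)\vV$ is only $-\mu\nc\vu$---and closing the bootstrap in~$j$ is the principal technical challenge.
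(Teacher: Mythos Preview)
Your overall strategy---define $\alpha,\beta$ via $\vValpnodiv,\vV_\beta$, derive a coupled two-scale system, and run a Klainerman--Majda energy estimate---matches the paper. But the execution has a genuine gap precisely at the ``main obstacle'' you identify, and neither of your two suggested fixes closes it.

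When you write the coupled system with \emph{constant} coefficient $\tfrac1{\epM}$ in the large term, the difference $\tfrac{a(\epM r)\rho(\epM r)-1}{\epM}\,\pz u_3^I$ (which is $O(r)\,\pz u_3^I$, dominated by $r^S\pz u_3^I$) lands in $\mathcal F_\alpha$. Its $H^j$-norm is controlled by $\|\pz u_3^I\|_j\le c\,\ep_{j+1}$, one factor of~$\mu$ too large. Writing it as $\pz(r^Su_3^I)$ and integrating by parts moves the $\pz$ onto $\pa^\sigma\alpha$, producing $\|\alpha\|_{j+1}$ and losing a derivative; the $O(\mu)$ factors in the subleading components of $\vValpnodiv,\vV_\beta$ are irrelevant here because the term comes from the \emph{leading} ($r$-)component. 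Your ``induction on~$j$'' cannot help either, since the forcing at level~$j$ involves the same-level quantity $\|\pz u_3^I\|_j$, not a lower one.

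The paper resolves this by \emph{not} reducing to constant coefficients: it first replaces the argument of $a,\rho$ by the slow part $\epM r^S$ (moving only the $r^I$-difference, bounded by $c\|r^I\|_j$, to the right), and keeps the full variable coefficient $\tfrac{a(\epM r^S)\rho(\epM r^S)}{\epM}\mathcal Q\pz$ on the \emph{left} in both equations; see \eqref{eq:alpqnew}--\eqref{eq:betqnew}. This yields a genuine symmetric-hyperbolic pair with positive symmetrizer $\mathrm{diag}(a(\epM r^S),\rho(\epM r^S))$. In the $H^j$ energy the principal parts then cancel by skew-adjointness, while the commutator $[\pa^\sigma, a(\epM r^S)\rho(\epM r^S)]\tfrac1{\epM}\mathcal Q\pz\beta$ gains the missing $\epM$ from $\nabla(a\rho)=O(\epM)$ and is bounded by $c\|\beta\|_j$. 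The remaining forcing satisfies $\|R\|_j\le c(\|(r^I,u_3^I)\|_j+\ep_j)\le c(\|(\alpha,\beta)\|_j+\ep_j)$ via \eqref{eq:alpbetriu3i}, and Gronwall at each fixed~$j$ (no bootstrap) gives \eqref{eq:intestdyn}. A further subtlety you do not mention: at the top level $j=n-1$ the residual terms $R_{\alpha,4},R_{\beta,4}$ require the algebraic identity \eqref{eq:dformnew} for $\mathcal D$ to trade the extra derivative for a $\pz$ hitting one of the expressions in \eqref{eq:goodterms}.
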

The case $j=0$ in \eqref{eq:intestdyn}
% and  estimate \eqref{eq:intestzstat} were
was already proven in Theorem~\ref{thm:fast}.  The remaining cases in \eqref{eq:intestdyn} are an improvement over
the corresponding cases in \eqref{eq:inteststat} by one factor of $\mu$. 

To prove \eqref{eq:intestdyn} we will use the variables
% As shown in Lemma~\ref{lem:exacti} and the discussion preceeding it, and in particular formula \eqref{eq:vabdef},
% the appropriate expressions to use in the dynamic estimate for the intermediate
% modes are not the fixed intermediate modes defined in \eqref{eq:modeforms} but rather
\begin{equation}\label{eq:abdef}
  \begin{aligned}
  \alpha&\eqdef  (1-\az) \vValpnodiv\cdot\vV= (1-\az)(r-\mu b_3+\mu^3 \mathcal{C}\mathcal{Q}\Deltainv\pz^2 b_3),
\\
  \beta&\eqdef (1-\az)\vV_\beta\cdot\vV=(1-\az)(\mathcal{D} u_3+\mu^2 \mathcal{C}\Deltainv \pz \nch\uh),
\end{aligned}
\end{equation}
where the operators $\vValpnodiv$, $\vV_\beta$, $\mathcal{C}$, $\mathcal{D}$, and $\mathcal{Q}$ were defined in \eqref{eq:vabdef}\textendash\eqref{eq:abphipsi}.
As a preliminary %to proving Theorem~\ref{thm:iest}
we will derive a system of PDEs
satisfied by $(\alpha,\beta)$, with remainder terms that are consistent with the desired estimate \eqref{eq:intestdyn}.
The general idea is to apply each of the operators $\vValpnodiv$ and $\vV_\beta$ to the PDE, note that by Lemma~\ref{lem:exacti} the large terms of
the result are $\pz\mathcal{Q}$ applied to the other operator, and calculate the form of the order one terms. To simplify that calculation we first
simplify the original equations by moving to the right sides all terms whose $H^j$ norms can be estimated
by a constant times $\ep_j$  or the $H^j$ norms of $r^I$ and $u_3^I$ using \eqref{eq:fastest}, \eqref{eq:inteststat}, and \eqref{eq:vtinterp}.
To do so we will the formulas~\eqref{eq:modeforms} and in particular their consequence
\begin{equation}\label{eq:vucnform}
  \vu\cn=(\vu^S+\vu^I+\vu^F)\cn=\vu^S\cn+u_3^I\pz+\uh^F\cnh.
\end{equation}

Starting from the MHD equations \eqref{MHDsy}, 
we replace the argument $\epM r$ of $\rho$ and $a$ by $\epM r^S$ or zero where possible and compensate by adding terms to the right sides of the
equations, except that we retain a factor of $a(\epM r^S)$ multiplying
$\tfrac1{\epM}\nch\uh$ because that expression will then cancel exactly when we build the time evolution equation for $\alpha$, \eqref{eq:alpqnew}.
In addition, we apply $1-\az$ to the equations since that operator appears in all terms of the formulas \eqref{eq:abdef} for $\alpha$ and $\beta$.
Since slow modes are independent of $z$ the operator $1-\az$ can be moved past most coefficients. % so that it applies only to differentiated factors.
This  yields
\begin{subequations}\label{eq:MHDsymod}
  \begin{align}
    \label{eq:MHDsymod:r}
    &a(\epM r^S) (\pt +(\vu^S\cn))r^I+\tfrac{a(\epM r^S)\rho(\epM r^S)}{\epM}\pz u_3^I+\tfrac{a(\epM r^S)}{\epM} (1-\az)\nch\uh=(1-\az)R_1,
    \\
    \label{eq:MHDsymod:uh}
    &\begin{aligned}\rho(\epM &r^S)(\pt +(\vu^S\cn))(1-\az)\uh+\tfrac{a(\epM r^S)\rho(\epM r^S)-1}{\epM}\nabh r^I
      \\&+(1-\az)[\nabh\tfrac{|\vb|^2}2-(\vb\cn)\bh]
            -\tfrac{(1-\az)}\epA(\pz \bh^F-\nabh b_3^F-\mu\nabh r^I)=(1-\az)R_2,
  \end{aligned}
    \\
    \label{eq:MHDsymod:u3}
    &\begin{aligned}\rho(\epM &r^S)(\pt +(\vu^S\cn) )u_3^I+\tfrac{a(\epM r^S)\rho(\epM r^S)}{\epM}\pz r^I-(\bh^S\cnh)(1-\az)b_3^F
       \\&=(1-\az)R_3,
    \end{aligned}
    \\
    \label{eq:MHDsymod:b3}
    &(\pt +(\vu^S\cn))(1-\az)b_3-(\bh^S\cnh)u_3^I+(1-\az)\tfrac1\epA\nch\uh=(1-\az)R_4,
  \end{align}
\end{subequations}
where the equation for $\bh$ has been omitted since it does not enter into $\alpha$ or $\beta$, and
\begin{subequations}
  \begin{align}
    \label{eq:R1def}
    &\begin{aligned}
      R_1&\eqdef-a(\epM r)\left( u_3^I\pz r+(\uh^F\cnh)r\right)+\left( a(\epM r^S)-a(\epM r)\right) (\pt r+(\vu^S\cn)r)
      \\&\quad\! +\frac{a(\epM r^S)\rho(\epM r^S)-a(\epM r)\rho(\epM r)}{\epM}(\nch\uh+\pz u_3^I)
      %\\&\quad
      +a(\epM r^S)\frac{1-\rho(\epM r^S)}{\epM} \nch\uh,
  \end{aligned}
    \\
    \label{eq:R2def}
    &\begin{aligned}
      R_2&\eqdef-\rho(\epM r)\left( u_3^I\pz \uh+(\uh^F\cnh)\uh\right)+\left( \rho(\epM r^S)-\rho(\epM r)\right) (\pt \uh+(\vu^S\cn)\uh)
      \\&\quad +\frac{a(\epM r^S)\rho(\epM r^S)-a(\epM r)\rho(\epM r)}{\epM}\nabh r,
    \end{aligned}
    \\
    \label{eq:R3def}
    &\begin{aligned}
      R_3&\eqdef-\rho(\epM r)\left( u_3^I\pz u_3+(\uh^F\cnh) u_3\right)+\left( \rho(\epM r^S)-\rho(\epM r)\right) (\pt u_3+(\vu^S\cn)u_3)
      \\&\quad +\frac{a(\epM r^S)\rho(\epM r^S)-a(\epM r)\rho(\epM r)}{\epM}\pz  r
      %\\&\quad
      -\bh\cdot \pz \bh^F+(\bh^F\cnh)b_3^F,
    \end{aligned}
    \\
    \label{eq:R4def}
    &R_4=-\left( u_3^I\pz b_3+(\uh^F\cnh) b_3\right)-(\nch\uh)b_3+(\bh^F\cnh)u_3.
  \end{align}
\end{subequations}
Wherever $u_3^I$ appears undifferentiated in $R_i$, the $H^j$ norm of the term in which it appears can be estimated
by a constant times the $H^j$ norm of $u_3^I$, for $j=0,\ldots,n-1$. Similarly, for any smooth function $F$ and $j=0,\ldots,n-1$, 
$\|\tfrac{F(\epM r^S)-F(\epM r)}{\epM}\|_j\le c\|r^I\|_j$, and
$\|[F(\epM r^S)-F(\epM r)](\pt \vV+(\vu^S\cn)\vV)\|_j
  \le c \epM \|r^I\|_j (\|\vV_t\|_{n-1}+\|\vV\|_n)\le c \|r^I\|_j$.
%in view of the bound for $\|\vV_t\|_{n-1}$ in \eqref{eq:uniform}  and the constraint \eqref{eq:epAepMgen}.
By \eqref{eq:fastest}, terms containing $\uh^F$, $\bh^F$, $\nch\uh^F=\nch\uh$,  or $\pz\bh^F$
without further derivatives can be estimated in the $H^j$ norm by $c\ep_j$, for $0\le j\le n-1$.
Since these cases cover all the terms in the $R_i$,
\begin{equation}
  \label{eq:Rest}
  \sum_{i=1}^4 \|(1-\az)R_i\|_j\le\sum_{i=1}^4 \|R_i\|_j\le c(\|(r^I,u_3^I)\|_j+\ep_j), \quad j=0,\ldots,n-1.
\end{equation}

To obtain the evolution equation for $\alpha$,
%apply each component of $\vValpnodiv$ to the corresponding equation in \eqref{eq:MHDsymod},
%then multiply the equation involving $b_3$ by $a(\epM r^S)$ and add the results. In other words we
subtract $\mu a(\epM r^S)$ times \eqref{eq:MHDsymod:b3} from \eqref{eq:MHDsymod:r} and add
$a(\epM r^S)\mu^3\mathcal{C}\mathcal{Q}\Deltainv\pz^2$ applied to
\eqref{eq:MHDsymod:b3} to the result.
Then commute  $\mathcal{C}\mathcal{Q}\Deltainv\pz^2$ past $\vu^S\cn$, make the
coefficient of the large terms that do not cancel be $a\rho$ everywhere while compensating via terms on the right side,
force the function to which $\mu(\bh^S\cnh)$ is applied to be $\frac{\beta}{\sqrt{1+\mu^2}}$
for reasons to be explained later and again compensate on the right side,
and use the identity $1=\mathcal{Q}\mathcal{D}$ from \eqref{eq:abphipsi} that makes the large terms exactly involve $\beta$,
as we know from \eqref{eq:vabq} that they must. This 
yields
\begin{equation}
  \label{eq:alpqnew}
  \begin{aligned}
    a(\epM r^S)&(\pt+(\vu^S\cn))\alpha +\tfrac{\mu}{\sqrt{1+\mu^2}}(\bh^S\cnh)\beta+\frac{a(\epM r^S)\rho(\epM r^S)}{\epM}\mathcal{Q}\pz\beta
    %\\&
    =R_{\alpha,1}+R_{\alpha,2}+R_{\alpha,3}+R_{\alpha,4},
\end{aligned}
\end{equation}
where
\begin{align}
  \label{eq:Ralp1}
  &R_{\alpha,1}\eqdef(1-\az)R_1-\mu a(\epM r^S)(1-\az)R_4+\mu^3a(\epM r^S)\mathcal{C}\mathcal{Q}\Deltainv\pz^2 R_4
\\\intertext{comes from the right sides of the modified equations \eqref{eq:MHDsymod},}
\label{eq:Ralph2}
  &R_{\alpha,2}\eqdef -\mu^3 a(\epM r^S)[\mathcal{C}\mathcal{Q}\Deltainv\pz^2,\vu^S]\cn(1-\az)b_3^F
\\\intertext{comes from commuting the operator applied to \eqref{eq:MHDsymod:b3} past the coefficient $u_3^S$,}
  \label{eq:Ralp3}
  &\begin{aligned}
R_{\alpha,3}&\eqdef %\mu^3(\bh^S\cnh)(\mathcal{C}\Deltainv\pz\nch\uh)
%-\mu^4a(\epM r^S)\mathcal{C}\mathcal{Q}\Deltainv\pz^2 (\bh^S\cnh)u_3^I
%\\&\quad+
-\mu\epM \tfrac{a(\epM r^S)-1}{\epM} (\bh^S\cnh)u_3^I+
\mu^3a(\epM r^S)\mathcal{C}\mathcal{Q}\Deltainv\pz\left[(\bh^S\cnh)\pz u_3^I\right]
\\&\qquad+\mu^3 a(\epM r^S)\tfrac{\rho(\epM r^S)-1}{\epA}\mathcal{C}\mathcal{Q}\Deltainv\pz^2(\nch\uh)
\end{aligned}
  \\\intertext{comes from adding compensating terms to the right side and moving  entire terms there, and}
  \label{eq:ra4def}
&R_{\alpha,4}\eqdef \mu(\bh^S\cnh)\big( \tfrac{\beta}{\sqrt{1+\mu^2}}-u_3^I\big)
\end{align}
comes from forcing the term involving $\bh^S\cnh$ to have the desired form, and will be rearranged further later.
%The form of the large term in both \eqref{eq:alpqnew} and 
%\eqref{eq:betqnew} below follows from \eqref{eq:vablalm} or direct calculation using \eqref{eq:abphipsi}.

Similarly, to obtain the evolution equation for $\beta$, % we begin by applying $\vV_\beta$ to \eqref{eq:MHDsymod}, i.e.,
add $\mu^2\mathcal{C}\Deltainv\pz\nch$ applied to \eqref{eq:MHDsymod:uh} to $\mathcal{D}$
applied to \eqref{eq:MHDsymod:u3}, and rearrange terms in
similar fashion as for \eqref{eq:alpqnew}. Then force the function to which $\bh^S\cnh$ is applied on the left side of the equation to be
$\frac{\mu}{\sqrt{1+\mu^2}}\alpha$ for reasons to be explained later, and
subtract appropriate constants from the factors appearing inside commutators since that does not affect their value, in order to
facilitate estimate the size of the resulting terms. Also, 
to ensure the symmetry of the resulting system for $\alpha$ and $\beta$, multiply the large term
$\tfrac{\mu^2}{\epA}(1-\az)\mathcal{C}\Deltainv\pz\nch (\pz\bh-\nabh b_3-\mu\nabh r)$ appearing on the left side of the equation
by $a(\epM r^S)\rho(\epM r^S)$ and compensate by adding $a(\epM r^S)\rho(\epM r^S)-1$ times that term to the right side.
By \eqref{eq:abphipsi}, the resulting large term exactly involves $\alpha$.
This yields
\begin{equation}
  \label{eq:betqnew}
  \begin{aligned}
    \rho(\epM r^S)&(\pt+(\vu^S\cn))\beta+\tfrac{\mu}{\sqrt{1+\mu^2}}(\bh^S\cnh)\alpha+\frac{a(\epM r^S)\rho(\epM r^S)}{\epM}\mathcal{Q}\pz\alpha
    %\\&
    =R_{\beta,1}+R_{\beta,2}+R_{\beta,3}+R_{\beta,4},
  \end{aligned}
\end{equation}
where
\begin{equation*}
  R_{\beta,1}\eqdef\mathcal{D}(1-\az)R_3+\mu^2\mathcal{C}\Deltainv\pz\nch(1-\az)R_2
\end{equation*}
comes from the right sides of \eqref{eq:MHDsymod},
\begin{equation*}
  \begin{aligned}
    R_{\beta,2}&=-\mu^2 \epM[\tfrac{\mathcal{D}-1}{\mu^2},\tfrac{\rho(\epM r^S)-1}{\epM}](\pt +(\vu^S\cn))u_3^I-\mu^2\rho(\epM r^S)[\tfrac{\mathcal{D}-1}{\mu^2},\vu^S]\cn u_3^I 
    %\\&\quad
    -\mu^2[\tfrac{\mathcal{D}-1}{\mu^2},\tfrac{a(\epM r^S)\rho(\epM r^S)-1}{\epM}]\pz r^I
    %-\mu^2[\tfrac{\mathcal{D}-1}{\mu^2},\bh^S]\cnh (1-\az)b_3^F
     \\&\quad-\mu^2\epM[\mathcal{C}\Deltainv\pz\nabh,\tfrac{\rho(\epM r^S)-1}{\epM}]\cdot(\pt+(\vu^S\cn))(1-\az)\uh^F
     %\\&\quad
     -\mu^2\rho(\epM r^S)[\mathcal{C}\Deltainv\pz\nabh,\vu^S]\nab(1-\az)\uh^F
     \\&\quad-\mu^2[\mathcal{C}\Deltainv\pz\nabh,\tfrac{a(\epM r^S)\rho(\epM r^S)-1}{\epM}]\cdot \nabh r^I
       \end{aligned}
     \end{equation*}
comes from commuting the operators applied to \eqref{eq:MHDsymod:u3} and \eqref{eq:MHDsymod:uh} past coefficients in those equations,
\begin{equation*}
  \begin{aligned}
    R_{\beta,3}&\eqdef
    %      -\mu^3(\bh^S\cnh)(\mathcal{D}-1)\Deltainv \Delta_h r^I-\mu^2\bh^s\cdot (\nabh\mathcal{D}\Deltainv\pz)\,\pz r^I
    %      \\&\quad
    -\mu^2\tfrac{a(\epM r^S)\rho(\epM r^S)-1}{\epM}\mathcal{C}\Deltainv\Delta_h\pz r^I
     %\\&\quad
    {-\mu^2\mathcal{C}\Deltainv\pz\nabh\cdot(\nabh\tfrac{|\vb|^2}2-(\vb\cn)\bh)}
    \\&\quad
    -\mu^2\tfrac{a(\epM r^S)\rho(\epM r^S)-1}{\epA}\mathcal{C}\pz (b_3^F+\mu\Deltainv\Delta_h r)
  \end{aligned}
\end{equation*}
comes from moving terms to the right side,  balancing a term added on the left side, and using \eqref{eq:getlapb3}, and
\begin{equation}
\label{eq:rbet4def}
  R_{\beta,4}\eqdef \tfrac{\mu}{\sqrt{1+\mu^2}}(\bh^S\cnh)\alpha+\mathcal{D}(\bh^S\cnh)(1-\az)b_3^F
\end{equation}
comes from forcing the term involving $\bh^S\cnh$ to have the desired form, and will be rearranged further later. 

We now estimate the terms $R_{\alpha,i}$ and $R_{\beta,i}$. Since the operators applied to the $R_i$ in $R_{\alpha,1}$ and $R_{\beta,1}$ are all
bounded, \eqref{eq:Rest} implies that
\begin{equation}
  \label{eq:rab1est}
  \|R_{\alpha,1}\|_j+\|R_{\beta,1}\|_j\le c(\|(r^I,u_3^I)\|_j+\ep_j), \quad j=0,\ldots,n-1.
\end{equation}

The terms in $R_{\alpha,2}$ and $R_{\beta,2}$ all involve commutators, and the
%of  a constant-coefficient zeroth-order pseudo-differential operator
%that is bounded as a function of $\mu$
%with a function in $H^n$. Each of the two terms making up the commutator is therefore a bounded operator on $H^j$  for $j\le n$. The
following lemma says that the commutator gains one derivative, which in many cases is a vital improvement.
  \begin{lem}\label{lem:pdcomm}  {\rm(\cite[Lemma 2.5]{metivier01:euler})}
    Let $\widehat P(k,l,m)$ be homogeneous of degree zero and real analytic for $(k,l,m)\ne(0,0,0)$, and let $P$ be the Fourier multiplier
    operator defined by $\widehat{Pf}=\widehat{P}\widehat{f}$.
        Then for all $n\ge3$, $f\in H^n(\mT^3)$, $j\in 1,\ldots,n$, and $g\in H^{j-1}$,
  \begin{equation}
    \label{eq:psdcommest}
    \|[P,f]g\|_j\le c\|f\|_n\|g\|_{j-1}.
  \end{equation}
\end{lem}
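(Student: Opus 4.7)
The plan is to prove the commutator estimate by combining paradifferential calculus with the key symbolic observation that if $\widehat{P}(k,l,m)$ is homogeneous of degree zero and smooth away from the origin, then $\nabla\widehat{P}$ is homogeneous of degree $-1$, so commuting $P$ past multiplication by a smooth function should reduce the total order by one.

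First I would express the commutator in Fourier variables,
\begin{equation*}
  \widehat{[P,f]g}(\xi) = \sum_{\eta}\bigl(\widehat{P}(\xi) - \widehat{P}(\xi-\eta)\bigr)\widehat{f}(\eta)\,\widehat{g}(\xi-\eta),
\end{equation*}
and then perform a Littlewood--Paley decomposition on both $f$ and $g$. Bony's paraproduct decomposition splits $[P,f]g$ into three regimes according to the relative size of the two frequencies: low--high (the frequency of $f$ is much smaller than that of $g$), high--low, and high--high (comparable).

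In the low--high regime, where $|\eta|\ll|\xi|$, a Taylor expansion of the smooth homogeneous symbol gives
\begin{equation*}
  \bigl|\widehat{P}(\xi) - \widehat{P}(\xi-\eta)\bigr| \le C\,\frac{|\eta|}{|\xi|},
\end{equation*}
which is precisely the promised gain of one derivative: a factor of $|\xi|$ on $g$ is traded for a derivative on $f$. Dyadic-block estimates then give an $\ell^2$-summable bound, the Lipschitz-type norm of $f$ being controlled by $\|f\|_n$ via the Sobolev embedding $H^{n-1}(\mT^3) \hookrightarrow L^\infty$, which is available because $n\ge 3$. The high--low regime is straightforward since $g$ is already low frequency and $P$ is bounded on $L^2$, so one estimates directly by Sobolev multiplication.

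The main technical obstacle is the high--high remainder, where frequencies are comparable and neither Taylor expansion nor trivial $L^2$-boundedness alone produces the gain. One resolves this by observing that convolutions of comparable dyadic pieces produce output whose frequency support fills an entire dyadic band rather than being concentrated at a single scale; re-summing these contributions in $\ell^2$ recovers the missing derivative, using $H^j$ regularity on $g$ together with $H^n$ regularity on $f$. Assembling the three regimes yields \eqref{eq:psdcommest}. Since the statement is exactly Lemma~2.5 of \cite{metivier01:euler}, one can of course simply invoke that reference, which is what the authors do here.
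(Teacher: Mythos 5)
The paper itself does not prove this lemma; as the citation built into the statement indicates, it is simply imported verbatim from M\'etivier--Schochet, so there is no in-paper proof to compare against. Your sketch is nevertheless in the right spirit: the proof in the reference is indeed a paraproduct argument, and the key point you isolate — that $\nabla\widehat P$ is homogeneous of degree $-1$, so in the low-high regime the mean-value inequality gives $|\widehat P(\xi)-\widehat P(\xi-\eta)|\lesssim|\eta|/|\xi|$ and converts one derivative on $g$ into one on $f$ — is exactly the heart of the matter, and the embedding $H^{n-1}(\mT^3)\hookrightarrow L^\infty$ (needing $n\ge 3$) does control the relevant norm of $f$.

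Where the sketch falls short of a proof is in the other two regimes. For the high-low terms you claim a "straightforward Sobolev multiplication," but for small $j$ (say $j=1$) the factor $g$ sits only in $L^2$, not $L^\infty$, so you cannot directly appeal to a product estimate; one must use Bernstein's inequality to trade $\|S_{k}g\|_{L^2}$ for $\|S_kg\|_{L^\infty}$ at the cost of $2^{3k/2}$, and then verify that $n\ge 3$ makes the resulting dyadic sum converge. Likewise the high-high remainder $R(f,g)$ requires the same Bernstein bookkeeping, and again $n\ge3$ (equivalently $n>5/2$ in $\mT^3$) is precisely what makes the sum over comparable dyadic blocks converge when the target norm is $H^j$ with $j$ as large as $n$. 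Neither of these steps is difficult, but as written your proposal does not exhibit why the hypothesis $n\ge 3$ is sharp for them, which is the only genuinely delicate point in the lemma. Since the paper's ``proof'' is the citation, invoking the reference, as you do at the end, is fully adequate; if you wanted a self-contained argument you would need to fill in the Bernstein estimates in the two secondary regimes.
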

The constant-coefficient pseudo-differential operators appearing in the commutators in $R_{\alpha,2}$ and $R_{\beta,2}$ are all homogeneous of degree
zero and bounded uniformly in $\mu$ (in particular \eqref{eq:dformnew} below implies a bound on $\tfrac{\mathcal{D}-1}{\mu^2}$),  and  they are real analytic for
$(k,l,m)\ne(0,0,0)$ since the denominators in the formulas for $\mathcal{C}$, $\mathcal{D}$, and $\mathcal{Q}$  in \eqref{eq:qdef},
\eqref{eq:abphipsi}  are positive for $\mu<1$, so they satisfy the conditions of ~Lemma~\ref{lem:pdcomm}. The expressions to which the commutators are
applied have one of two forms: either they consist of a single spatial derivative applied to a fast or intermediate component that is estimated in
\eqref{eq:fastest} or \eqref{eq:inteststat}, or they are some component of $(\pt+(\vu^S\cn))\vV$. In the former case the expression contains a factor
$\mu^p$ with $p\ge2$, so by Lemma~\ref{lem:pdcomm} its $H^j$ norm for $j=0,\ldots,n-1$ can be estimated by
\begin{equation*}
  \begin{aligned}
  c\mu^2 \|\nab ((1-\az)b_3^F,u_3^I, r^I,\uh^F)\|_{\max(0,j-1)}&\le c\mu^2 \|((1-\az)b_3^F,u_3^I, r^I,\uh^F)\|_{\max(1,j)}
  %\\&
  \le c\mu^2 \ep_{j+2}\le c\,\ep_j.
\end{aligned}
\end{equation*}
In the latter case the expression contains the factor $\mu^2\epM$
so by the interpolation bounds \eqref{eq:vtinterp} its $H^j$  norm for $j=0,\ldots,n-1$ can be estimated by
$c\mu^2\epM(\|\vV_t\|_{j-1}+c)\le c\mu^2\epM \mu^{-j}\le c\mu\ep_j$. Hence
\begin{equation}
  \label{eq:rab2est}
   \|R_{\alpha,2}\|_j+\|R_{\beta,2}\|_j\le c\, \ep_j, \quad j=0,\ldots,n-1.
\end{equation}

To estimate the terms in $R_{\alpha,3}$ and $R_{\beta,3}$ note that those terms either have a factor $\mu^2\epM$ that is smaller than all $\ep_j$,
have a zeroth-order pseudo-differential operator applied to $\pz r^I$, $\vb_z$ (after applying the $z$-derivative in the
expression to some factor of $\vb$), $\nch\uh$, or
$\pz(b_3^F+\mu\Deltainv\Delta_h r)$, all of which have their $H^j$ norms estimated in \eqref{eq:fastest} or \eqref{eq:inteststat}, or have a
zeroth-order pseudo-differential operator applied to $\Deltainv\pz[(\bh\cnh)\pz u_3^I]$. In the first case using the uniform estimate
\eqref{eq:uniform} shows that the $H^{n-1}$ norm is bounded by $c\,\ep_0$. In the middle cases each term contains a factor of $\mu^2$
so its $H^j$ norm is bounded by $c\mu^2 \ep_{j+1}\le c\,\ep_j$. In the final case, since $\|\Deltainv\pz f\|_j\le c\|f\|_{\max(j-1,0)}$ and the term
contains a factor of $\mu^3$, it is bounded by $c\mu^3 \|\nabla\pz u_3^I\|_{\max(j-1,0)}\le c \mu^3\|\pz u_3^I\|_{\max(j,1)}\le
c\mu^3\ep_{j+2}\le c\mu\ep_j$. Together, these yield
\begin{equation}
  \label{eq:rab3est}
   \|R_{\alpha,3}\|_j+\|R_{\beta,3}\|_j\le c\, \ep_j, \quad j=0,\ldots,n-1.
\end{equation}

For $j\le n-2$ the terms $R_{\alpha,4}$ and $R_{\beta,4}$ can be estimated by using the fact that
\eqref{eq:fastest} and \eqref{eq:inteststat} imply that 
\begin{equation}
  \label{eq:estjlen-2}
  \mu\|\vV^F_*\|_{j+1}+\mu^2\|\vV^I\|_{j+1}\le c(\mu\ep_{j+1}+\mu^2\ep_{j+2})\le c\,\ep_j, \quad j=0,\ldots, n-2,
\end{equation}
where $\vV^F_*$ means all components of $\vV^F$ except $\az b_3^F$, which is not estimated in \eqref{eq:fastest}.
The estimate $\|R_{\alpha,4}\|_j+\|R_{\beta,4}\|_j\le c \ep_j$ can therefore be obtained for $j\le n-2$ by using the formulas for $\alpha$ and $\beta$
in \eqref{eq:abdef} together with the facts that $\frac{\mathcal{D}-1}{\mu^2}$ is a bounded zeroth-order pseudo-differential
operator and that 
\begin{equation}\label{eq:b3fform}
  b_3^F=(b_3^F+\mu\Deltainv\Delta_h r)+\mu\Deltainv\pz^2 r^I-\mu (r-\avv r)
\end{equation}
plus estimates similar to those used for $R_{\alpha,3}$ and $R_{\beta,3}$.

However, \eqref{eq:estjlen-2} is not valid for $j=n-1$ because \eqref{eq:fastest} and \eqref{eq:inteststat} do not hold for $j=n$.
We therefore rearrange $R_{\alpha,4}$ and $R_{\beta,4}$ to be linear
combinations of the terms
\begin{equation}\label{eq:goodterms}
  \nch\uh, \quad \pz \vu^F, \quad \pz \vb, \quad\grad(b_3^F+\Deltainv\Delta_h r), \quad \pz r \quad\text{and}\quad \pz u_3
\end{equation}
whose $H^j$ norms are estimated
in \eqref{eq:fastest} or \eqref{eq:inteststat} even though they involve a first derivative of $\vV$; in addition a factor of $\mu$ must be present
multiplying the terms $\pz(r^I,u_3^I)$ to compensate for the extra factor of $\tfrac1\mu$ in \eqref{eq:inteststat}.
Substituting \eqref{eq:b3fform} into the formula for $\alpha$ in \eqref{eq:abdef} and solving the result for $r^I$ yields
\begin{equation}
  \label{eq:riform}
  (1+\mu^2)r^I=\alpha+\mu(1-\az)(b_3^F+\mu\Deltainv\Delta_h r)+\mu^2 \Deltainv\pz^2 r^I-\mu^3\mathcal{C}\mathcal{Q}\Deltainv\pz^2 r^I.
\end{equation}
Applying $(1-\az)$ to \eqref{eq:b3fform}, which turns the final $-\mu (r-\avv r)$ into $-\mu r^I$, dividing \eqref{eq:riform} by $1+\mu^2$ and substituting the
result for that final $r^I$, and substituting the result into \eqref{eq:rbet4def} shows that $R_{\beta,4}$ equals
\begin{equation}\label{eq:mudform}
  \mu(\tfrac1{\sqrt{1+\mu^2}}-\tfrac1{1+\mu^2}\mathcal{D})(\bh\cnh)\alpha
\end{equation}
plus a sum of terms involving operators of order zero applied to the expressions in \eqref{eq:goodterms} whose $H^j$ norms can be bounded by $c\,\ep_j$ using
\eqref{eq:fastest} and \eqref{eq:inteststat}. 
To estimate \eqref{eq:mudform} we use the identity
\begin{equation}
  \label{eq:dformnew}
  \begin{aligned}
  \mathcal{D}&=\sqrt{1+\mu^2}
  %\\&\quad
  -\tfrac{4 \mu^2\Deltainv\pz^2}{\left(\sqrt{\mu ^2 \left(2-4\Deltainv\pz^2\right)+\mu ^4+1}+\mu ^2+1\right) \left(\sqrt{2}
   \sqrt{\sqrt{\mu ^2 \left(2-4\Deltainv\pz^2\right)+\mu ^4+1}+\mu
   ^2+1}+2 \sqrt{\mu ^2+1}\right)}
\end{aligned}
\end{equation}
derived from \eqref{eq:abphipsi}, which makes the constant term in \eqref{eq:mudform} cancel. The other term in \eqref{eq:dformnew} contains a factor of $\pz$
multiplied by an operator of order $-1$, the $z$-derivative of all the constituents of $\alpha$ are estimated in \eqref{eq:fastest} and \eqref{eq:inteststat}, and
an overall factor of $\mu$ is present in \eqref{eq:dformnew}, so
the $H^j$ norm of $R_{\beta,4}$ is bounded by a constant times $\ep_j$ even for $j=n-1$. Similarly, substituting formula \eqref{eq:abdef}
for $\beta$ into the definition \eqref{eq:ra4def} of $R_{\alpha,4}$ and substituting \eqref{eq:dformnew} into the result makes the constant term from
\eqref{eq:dformnew} cancel. All
remaining terms involve the expressions from \eqref{eq:goodterms} and so can be estimated by a constant times $\ep_j$ since an overall factor
of $\mu$ is present. This yields
\begin{equation}
  \label{eq:rab4est}
   \|R_{\alpha,4}\|_j+\|R_{\beta,4}\|_j\le c\, \ep_j, \quad j=0,\ldots,n-1.
\end{equation}

\begin{proof}[Proof of Theorem~\ref{thm:iest}]
  The system \eqref{eq:alpqnew}, \eqref{eq:betqnew} has the form of the Klainerman-Majda two-scale theory.
  Moreover, \eqref{eq:abdef} plus estimates similar to those above yield
  \begin{equation}
    \label{eq:alpbetriu3i}
    \|\alpha-r^I\|_j+\|\beta-u_3^I\|_j\le c\,\ep_j,\quad j=0,\ldots,n-1.
  \end{equation}
Together with  the estimates
  \eqref{eq:rab1est}, \eqref{eq:rab2est}, \eqref{eq:rab3est}, and \eqref{eq:rab4est}, \eqref{eq:alpbetriu3i}  shows that the $H^j$ norm of the right sides of those equations is
  bounded by a constant times $\|(\alpha,\beta)\|_j+\ep_j$. Hence the standard Klainerman-Majda energy estimates show that
  \begin{equation}
    \label{eq:alpbetest}
   \max_{0\le t\le T} \|(\alpha,\beta)\|_j\le c(\|\alpha(0),\beta(0))\|_j+\ep_j), \quad j=0,\ldots,n-1.
 \end{equation}
 The initial data \eqref{eq:initfis},  \eqref{eq:initfixed} satisfies   $\|\vV^I\|_{n-1}\big|_{t=0}\le c\,\epM=c\,\ep_1$, so
\eqref{eq:alpbetriu3i} implies that $\|\alpha(0),\beta(0))\|_j\le c\ep_1$ and hence
 \eqref{eq:alpbetest} implies that $\max_{0\le t\le T} \|(\alpha,\beta)\|_j\le c\, \max(\ep_j,\ep_1)$ for $j=0,\ldots,n-1$.  Using \eqref{eq:alpbetriu3i} once more
 yields \eqref{eq:intestdyn}.
\end{proof}

\subsection{Equations and estimates for horizontal components of the slow mode}

Like for the intermediate mode dynamic estimates, estimating the difference between the slow modes of the solution to the
original system and the solution of the limit system requires PDEs for the exact zero eigenspace of the operator
$\cLA+\mu\cLM$.  The horizontal velocity and magnetic field slow modes belong to that eigenspace, so we will
write the equations for those modes as the limit equations plus error terms, by applying the projection
$\mathbb{P}$ onto the slow horizontal modes to the original system, expanding all dependent variables into fast, intermediate, and slow modes, and moving
all terms except those involving purely slow modes to the right sides of the equations. The remaining slow modes will be
treated in the following subsection.

\begin{thm}\label{thm:slow1}
  Assume that the conditions of the convergence part of Theorem~\ref{thm:theom1} hold.  Let $(r,\vu,\vb)$ be the solution of the MHD system~\eqref{MHDsy}, and
 let $(\rlim, \uhlim, \uvlim, \bhlim,\bvlim)$ be the solution of the limit system \eqref{limit:CJS0}  whose 
  initial data is the limit $(r^{0,S},\vu^{0,S}, \vb^{0,S}+\mu_{\lim}(1-\avv)r^{0,S})$ of the initial data \eqref{eq:initfis} of
  the original system.  Then
\begin{align}
  \label{eq:slowest1}
  \|\uh^S-\uhlim\|_{n-2}+\|\bh^S-\bhlim\|_{n-2}\le c\,\epM.   %  ^{\min(1, 2(1-(n-2)\nu))}.
\end{align}
\end{thm}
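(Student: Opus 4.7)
The plan is to apply the slow horizontal projections directly to the MHD equations, derive evolution equations for $(\uh^S,\bh^S)$ that equal the limit equations \eqref{eq:limit:uh}, \eqref{eq:limit:bh} plus forcing of size $O(\epM)$ in $H^{n-2}$, and then close via an energy estimate against the limit solution.

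Concretely, I would apply $\cPdh\az$ to the horizontal part of \eqref{MHDsy:vu} and $\az$ to the horizontal part of \eqref{MHDsy:vb}. The singular terms vanish under these projections: $\az\pz\bh=0$ and $\cPdh\nabh b_3=0$ kill the Alfv\'en forcing in the momentum equation, $\az\pz\uh=0$ kills it in the induction equation, and $\cPdh$ kills the gradient term $\nabh r$. Using $\rho(\epM r)=1+\epM r$, $a(\epM r)=1+O(\epM r)$ together with the uniform bound \eqref{eq:uniform} and the interpolation bound on $\pt\vV$, the non-unit coefficient factors produce only $O(\epM)$ remainders in $H^{n-2}$. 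Decomposing $\vu=\vu^S+\vu^I+\vu^F$, $\vb=\vb^S+\vb^F$, $r=r^S+r^I$ and using that $\vu^S,\vb^S$ are independent of $z$ with $\nch\uh^S=0=\nch\bh^S$, the slow-slow contributions from $(\vu\cn)\uh$, $(\vb\cn)\bh$, and the companion terms in \eqref{MHDsy:vb} reduce exactly to the right sides of \eqref{eq:limit:uh} and \eqref{eq:limit:bh}.

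The main obstacle is showing that every remaining cross term is $O(\epM)$ in $H^{n-2}$. The naive static bound $\|\vV^F\|_{n-2}\le c\,\ep_{n-2}=c\,\epM^{1-(n-3)\nu}$ is generically much larger than $\epM$, so direct product estimates do not suffice. Here I would invoke the improved $\az$-of-product estimate developed in \S\ref{sec:azprod}: after averaging in $z$, cross terms of the form $\az[(\vw^F\cn)g]$ with $g$ slow (and hence $z$-independent) reduce to expressions involving $\az\vw^F$, which gain the extra factor $\mu=\epM^\nu$ over the pointwise-in-time fast-mode bounds via the time-integrated estimate \eqref{TA:epA2}. Genuinely fast-fast terms are split using $\vw^F=\az\vw^F+(1-\az)\vw^F$, with the second piece controlled by Poincar\'e against $\|\pz\vw^F\|$, and again the $\az$ piece benefits from \eqref{TA:epA2} after time integration of the full equation. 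Terms involving intermediate modes are handled by \eqref{eq:intestdyn} at $j=1$, which gives the crucial bound $\|(r^I,u_3^I)\|_1\le c\,\epM$, combined with interpolation against the uniform $H^n$ bound on the companion factor.

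Writing $\delta\uh\eqdef\uh^S-\uhlim$ and $\delta\bh\eqdef\bh^S-\bhlim$, the difference system is a symmetric quasilinear perturbation of the linearization of \eqref{eq:limit:uh}--\eqref{eq:limit:bh} around $(\uhlim,\bhlim)$, driven by an $H^{n-2}$ source of size $O(\epM)$. The assumption \eqref{eq:initfixed} together with \eqref{eq:initfis} yields $\uh^S|_{t=0}=\uh^{0,S}=\uhlim^0$ and $\bh^S|_{t=0}=\bh^{0,S}+O(\epA)=\bhlim^0+O(\epA)$, so $\|(\delta\uh,\delta\bh)|_{t=0}\|_{n-2}\le c\,\epA\le c\,\epM$. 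A standard $H^{n-2}$ energy estimate combined with Gronwall's inequality on $[0,T]$ then delivers \eqref{eq:slowest1}. The critical ingredient throughout is the $\az$-of-product improvement, without which the fast-mode contributions to the forcing would be too large to close the bound.
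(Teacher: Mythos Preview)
Your outline hits the right ingredients but the closure step contains a genuine gap: the slow--fast cross terms are \emph{not} pointwise $O(\epM)$ in $H^{n-2}$, and ``standard Gronwall'' cannot handle what they actually are. A term like $\az\nch(\uh^S\otimes\uh^F)=\nch(\uh^S\otimes\az\uh^F)$ has $H^{n-2}$ size governed by $\|\az\uh^F\|_{n-1}\le c\,\ep_{n-2}=c\,\epM^{1-(n-3)\nu}$, which for $n\ge4$ is strictly larger than $\epM$; the ``extra factor~$\mu$'' you cite does not close the gap, and the $\az$-of-product lemma in \S\ref{sec:azprod} helps only when \emph{both} factors are fast or intermediate. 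The time-integrated bound \eqref{TA:epA2} controls $\int_0^t\az\uh^F$, not $\az\uh^F$ itself, so to exploit it you must integrate by parts in time: write the slow--fast term as $\pt\big(\nch(\uh^S\otimes\vA)\big)-\nch(\pt\uh^S\otimes\vA)$ with $\vA=\int_0^t\az\uh^F$. This produces forcing of the form $\pt\Xi+\xi$ with $\|\Xi\|_{n-1}\le c\,\epA$ and $\|\xi\|_{n-2}\le c\,\epA$, which is precisely the structure handled by the perturbation theorem of \S\ref{sec:pert}, not by a standard energy inequality. You also need $\|\pt\uh^S\|_{n-1}\le c$ for the $\xi$ piece, which must be read off from the slow equation \emph{after} the large terms have been removed---this is a separate preliminary step, not a consequence of \eqref{eq:vtinterp}.

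A related problem is your treatment of the variable coefficient: writing $\rho(\epM r)\pt\uh=\pt\uh+\epM r\,\pt\uh$ and invoking the interpolation bound on $\pt\vV$ gives $\epM\|\pt\vV\|_{n-2}\le c\,\epM\mu^{-(n-2)}=c\,\ep_{n-1}$, again not $O(\epM)$. The paper sidesteps this by first passing to conservation form (adding the continuity equation to the momentum equation), so that the time derivative acts on $\rho\uh$; after projecting this becomes $\pt\big((\az\rho)\uh^S\big)+\epM\pt\cPdh\az(r\uh^F)$, and the second term is absorbed into $\Xi$ rather than estimated pointwise. Finally, your description swaps the roles of the two tools: fast--fast products are handled directly by Corollary~\ref{cor:azproduct} (no time-integration needed), while it is the slow--fast products that require the $\pt\Xi$ device.
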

Before proving Theorem~\ref{thm:slow1} we need to derive appropriate equations. Since all slow modes contain the averaging operator $\az$ in
the $z$ direction, it will be convenient to write the equations in conservation form, so that derivatives with respect to $z$ disappear
when $\az$ is applied. In particular,
\begin{equation}\label{eq:divf}
  \az(\vw\cn f + (\nc\vw) f) =\az \nc(f\vw)=\az\nch(f\wh)=\az(\wh\cnh f +(\nch\wh) f).
\end{equation}
Adding the continuity equation \eqref{rho:0} to the momentum equation
\eqref{MHDsy:vu} yields
\begin{equation}\label{eq:ucons}
\pt(\rho\vu)+ \vu\cn(\rho\vu)+(\nc\vu)(\rho\vu) +  \nabla {\Phi}-\vb\cn\vb
=\epA^{-1} (\pz\vb-\nabla b_3),
\end{equation}
where ${\Phi}$ is some scalar-valued function defined on $\mT^3$,  i.e., periodic 
in $x,y,z$.
Apply $\cPdh\az$ to the first two components of \eqref{eq:ucons} and $\az$ to the first two components of \eqref{MHDsy:vb}, and simplify the result
by using the identity \eqref{eq:divf} not only with $\vw=\vu$ but also with $\vw=\vb$ since the constraint $\nc\vb=0$ implies that $\vb\cn f$ equals
$\nc(f\vb)$. The resulting equations can be further simplified by using the definitions \eqref{eq:modeforms} of the modes to obtain the identities
$\az\bh=\bh^S$ and $\az(\rho\uh)=\az(\rho\uh^S)+\az(\rho\uh^F)=(\az\rho)\uh^S+\az(\rho\uh^F)$, and then using the facts that $\rho=1+\epM r$ and
$\az\uh^F$ is a gradient to obtain $\cPdh\az(\rho\uh^F)=\epM\cPdh\az(r\uh^F)$.  This yields \begin{subequations}\label{slow:dyn}
\begin{align}
\label{slow:vu}\pt\,\cPdh((\az\rho)\uh^S)&+\cPdh\az\nch\big(\rho\, \uh\otimes\uh-\bh\otimes\bh\big)=-\epM\,\pt\,\cPdh\az(r\uh^F),\\
\label{slow:vb}\pt\,\bh^S&+\phantom{\cPdh}\az\nch\big(\uh\otimes\bh-\bh\otimes\uh\big)\;\;\,=0  ,\end{align}
\end{subequations}
where the tensor product $\otimes$ follows the convention that
$\nch(\uh\otimes\bh)=\uh\cnh\bh +(\nch\uh)\bh$.
Since $\pt\cPdh((\az\rho)\uh^S)=\pt\uh^S +\epM \pt \cPdh((\az r)\uh^S)$, \eqref{slow:dyn} together with the bounds
\eqref{eq:uniform} and \eqref{eq:epAepMgen} and the relation~\eqref{eq:nudef} show that
\begin{equation}
  \label{eq:ptuhsbhs}
  \|\pt \uh^S\|_{n-1}+\|\pt\bh^S\|_{n-1}\le c.
\end{equation}

Recalling that $\uh$ and $\bh$ have no intermediate part, we separate them into their fast
and slow parts  in the tensor products in \eqref{slow:vu}:
\begin{equation}\label{eq:sepsf}
\begin{aligned}
\az\nch\big(\rho\, \uh\otimes\uh-\bh\otimes\bh\big) &=\,\az\nch\big(\rho\,
\uh^S\otimes\uh^S-\bh^S\otimes\bh^S\big)\,  
\\
&\quad+\az\nch\,\Big(\big(\rho\, \uh^S\otimes\uh^F-\bh^S\otimes\bh^F\big)+\trsp\Big)+\az\nch\,\big(\rho \,\uh^F\otimes\uh^F - \bh^F\otimes\bh^F\big)\,
\end{aligned}
\end{equation}
where $\trsp$ denotes the tensor transpose. By
\eqref{eq:ind:z}--\eqref{eq:h:div:0}, the slow parts $(\uh^S,\bh^S)$
are independent of $z$ and divergence-free, so the first term on the right
side of~\eqref{eq:sepsf} simplifies to
$\uh^S\cnh((\az\rho)\uh^S)-\bh^S\cnh\bh^S$.  Also, since $\bh^S$ is
  independent of~$z$ while \eqref{eq:modeforms} shows that
  $\az\bh^F=0$, the expression $-\az\nch
  (\bh^S\otimes\bh^F)$ appearing in  the second term on the right in \eqref{eq:sepsf} is identically zero.
  Next, we  can drop the
$\cPdh$ operator from \eqref{slow:vu}  at the cost of adding a term 
$\nabh\theta(t,x,y)$  to that equation, since  a 2-vector is in the kernel of
$\cPdh$ if and only if it is a horizontal gradient.
Thus, \eqref{slow:vu} becomes
\begin{equation*}
  \begin{aligned}&\pt ((\az\rho)\uh^S)+\uh^S\cnh((\az\rho)\uh^S)-\bh^S\cnh\bh^S\\
    &=
    -\epM\pt\,\az(r\uh^F)
-\az\!\nch\, \big( \rho\,\uh^S\otimes\uh^F +\trsp\big)
%\\&
-\az\!\nch\big(\rho\, \uh^F\otimes\uh^F- \bh^F\otimes\bh^F\big)+\nabh \theta\,.\end{aligned}
\end{equation*}
 The second term on the right side is a  ``slow-fast'' product, which can be rewritten
 using time-integrated variable
 \begin{equation}
   \label{eq:Adef}
   \vA(t,\cdot):=\int_0^t\az\uh^F\,dt'\,,
 \end{equation}
 as the time derivative of a small term plus a small term, since \eqref{TA:epA2} shows that
 \begin{equation}
   \label{eq:bigaest}
   \|\vA\|_n\le c\,\epM^{1+\nu}.
 \end{equation}
For example, 
\begin{align*}
  \az(\rho\,\uh^S\otimes\uh^F)
&=\az (\uh^S\otimes\uh^F)+\epM \az(r \uh^S\otimes\uh^F)
=\uh^S\otimes (\az\uh^F) +\epM \az(r \uh^S\otimes\uh^F)
  \\&  =\pt(\uh^S\otimes\vA)-\big(\pt\uh^S\big)\otimes\vA +\epM \az(r \uh^S\otimes\uh^F)\,.
\end{align*}
Hence we obtain
\begin{equation}
 \label{slow:u:with:ave} (\pt+(\uh^S\cnh))\big((\az\rho)\uh^S\big) -\bh^S\cnh\bh^S
   =\pt\,\Xi_1+\xi_1+\nabh\theta\,, \qquad \nch\uh^S=0
 \end{equation}
 with
 \begin{equation*}
   \begin{aligned}
    \Xi_1 &:=- \epM \az(r\uh^F)%\Big|_0^t
    -\nch\,\big(\uh^S\otimes\vA+\trsp\big) , \\
    \xi_1 &:= \nch\Big\{\big(\pt \uh^S\big)\otimes\vA+\trsp\Big\}
  -\epM\! \az\!\nch(r \uh^S\otimes\uh^F+\trsp)
  %\\&\qquad
  -\az\!\nch\big(\rho\, \uh^F\otimes\uh^F - \bh^F\otimes\bh^F\big)\,.
 \end{aligned}
\end{equation*}
The bound \eqref{eq:fastest} together with the constraint \eqref{eq:epAepMgen},  the
relation~\eqref{eq:nudef} between the parameters and the definition \eqref{eq:epjdef} of the $\ep_j$ implies that $\|\uh^F\|_{n-1}+\|\bh^F\|_{n-1}\le
c\,\epM^\nu$. Using that estimate, the time-derivative bound \eqref{eq:ptuhsbhs}, estimate \eqref{eq:bigaest} for $\vA$, the formula $\rho=1+\epM
r$,  and Corollary~\ref{cor:azproduct} yields the estimates
\begin{equation}
  \label{eq:xi1est}
  \|\Xi_1\|_{n-1}+\|\xi_1\|_{n-2}\le c\,\epM^{1+\nu}.
  %\quad \|\pt\Xi_1\|_{n-2}\le c\,\epM^{1-(n-1)\nu},
%  \quad \|\xi_1\|_{n-2}\le c  \epM^{\min(1+\nu,2(1-(n-2)\nu))}.
\end{equation}

Applying the same ideas to the slow magnetic equation \eqref{slow:vb}, and in particular noting that $\az (\uh^S\otimes \bh^F+\trsp )=0$
yields
\begin{equation}
  \label{slow:b:with:ave}
   (\pt+(\uh^S\cnh)) \bh^S-\bh^S\cnh\uh^S=\pt\,\Xi_{\bh^S}+\xi_{\bh^S}\,, \qquad \nch\bh^S=0,
 \end{equation}  
with
\begin{equation}
  \label{eq:xi2}
 \begin{aligned} \Xi_{\bh^S}& := -\nch\,\big[ \vA\otimes\bh^S-\trsp\big]\,, \quad%\\ 
  {\xi_{\bh^S}}& {:= \nch\,\big[ \vA\otimes\pt\bh^S-\trsp\big]-\az\nch\big( \uh^F\otimes\bh^F - \trsp\big).}
  \end{aligned}
\end{equation}
The same bounds as for \eqref{eq:xi1est} show that 
\begin{equation}
  \label{eq:xibhsest}
  \begin{aligned}
    \|\Xi_{\bh^S}\|_{n-1}+\|\xi_{\bh^S}\|_{n-2}&\le c\,\epM^{1+\nu}.
    % \epM^{1+\nu},\; \|\pt\Xi_{\bh^S}\|_{n-2}\le c\,\epM^{1-(n-2)\nu},\; \|\xi_{\bh^S}\|_{n-2}\le c\,\epM^{\min(1+\nu,2(1-(n-2)\nu))}.
\end{aligned}
\end{equation}

The equation for the time evolution of $\uh^S$ can be further simplified using an equation for the time evolution of $r^S$.
By the first part
of~\eqref{eq:h:div:0} and the facts from \eqref{eq:modeforms}  that $\uh^I=0$ and
 $\az r=r^S$, the vertically-averaged equation \eqref{eq:rforlim} simplifies 
to
\begin{equation}\label{mod:dyn:az:r}\pt\, r^S+\uh^S\cnh( r^S)=-\az\nch(r\uh^F)-\epM^{-1}\nch(\az\uh^F)\,.
\end{equation}
Using the time-integrated variable~$\vA$ from \eqref{eq:Adef}, \eqref{mod:dyn:az:r} can be rewritten as
\begin{equation}
  \label{eq:ptepMrS}
  (\pt+(\uh^S\cnh))(\epM r^S) =-\epM\az\nch(r\uh^F)-\pt \nch\vA.
\end{equation}
Subtracting $\uh^S$ times \eqref{eq:ptepMrS} from \eqref{slow:u:with:ave}
noting that $\az\rho=1+\epM r^S$, and rewriting the term $\uh^S \pt \nch\vA$ on the right
side of the result as $\pt(\uh^S(\nch\vA))-(\pt\uh^S)\nch\vA$
yields
\begin{equation}
     \label{eq:4slowuh}
 (1+\epM r^S)(\pt+(\uh^S\cnh))\uh^S -\bh^S\cnh\bh^S   =\pt\,\Xi_2+\xi_2+\nabh\theta_2 \,, \qquad\nch\uh^S=0,
\end{equation}
where
\begin{equation}
  \label{eq:xi1def}
   \Xi_2\eqdef \Xi_1+\uh^S\nch\vA, \qquad \xi_2\eqdef\xi_1-(\pt\uh^S)\nch\vA+\epM\uh^S\az\nch(r\uh^F)
 \end{equation}
 also satisfy
\begin{equation}
  \label{eq:xi2est}
  \|\Xi_2\|_{n-1}+\|\xi_2\|_{n-2}\le c\,\epM^{1+\nu} %\epM^{1+\nu}, \quad \|\pt\Xi_2\|_{n-2}\le c\,\epM^{1-(n-1)\nu},\qquad \|\xi_2\|_{n-2}\le c  \epM^{\min(1+\nu,2(1-(n-2)\nu))}
\end{equation}
in view of the estimate \eqref{eq:xi1est} and the same bounds used to obtain that estimate.%, and the fact from \eqref{eq:ptuhsbhs} that $\pt\uh^S=O(1)$.
 
Now move the $\epM r^S$ term to the right side of \eqref{eq:4slowuh} and replace $\pt\Xi_2$ there
by its divergence-free part, which only changes the divergence term, to obtain
\begin{equation}
     \label{eq:slowuh1}
 (\pt+(\uh^S\cnh))\uh^S -\bh^S\cnh\bh^S   =\pt\,\Xi_{\uh^S}+\xi_{\uh^S}+\nabh\theta_{\uh^S} \,, \qquad\nch\uh^S=0
\end{equation}
with
\begin{equation}
  \label{eq:xiuhSdef}
   \Xi_{\uh^S}\eqdef \cPdh\Xi_2, \qquad \xi_{\uh^S}\eqdef\xi_2-\epM r^S (\pt+(\uh^S\cnh))\uh^S.
\end{equation}
Using the estimate \eqref{eq:xi2est}, the fact from \eqref{eq:ptuhsbhs} that $\pt\uh^S=O(1)$,
and the fact that the projection~$\cPdh$ does not increase Sobolev norms yields
\begin{equation}
  \label{eq:xiuhSest}
  \|\Xi_{\uh^S}\|_{n-1}\le c\,\epM^{1+\nu}, \quad \|\xi_{\uh^S}\|_{n-2}\le c\,\epM
  %\;\, \|\pt\Xi_{\uh^S}\|_{n-2}\le c\,\epM^{1-(n-1)\nu},
  %\;\, \|\xi_{\uh^S}\|_{n-2}\le c\,\epM^{\min(1, 2(1-(n-2)\nu))}
\end{equation}
in view of the term that is explicitly $O(\epM)$ in the definition~\eqref{eq:xiuhSdef} of $\xi_{\uh^S}$.

\begin{proof}[Proof of Theorem~\ref{thm:slow1}]
  The functions $(\uhlim,\bhlim)$ satisfy the systems
  \begin{align}
    \label{eq:uhlim}
    (\pt+(\uhlim\cnh))\uhlim-(\bhlim\cnh)\bhlim&=\nabh\Phi,\qquad \nch\uhlim=0,
    \\
    \label{eq:bhlim}
    (\pt+(\uhlim\cnh))\bhlim-(\bhlim\cnh)\uhlim&=0.
  \end{align}

We now apply Theorem~\ref{thm:pert} to the system \eqref{eq:slowuh1}, \eqref{slow:b:with:ave} for $u\eqdef (\uh,\bh)$ and the system \eqref{eq:uhlim},
\eqref{eq:bhlim} for $U\eqdef (\uhlim, \bhlim)$. Assumptions \eqref{eq:initfis} and \eqref{eq:initfixed}  ensure that the difference in their initial data is $O(\epM^{1+\nu})$.
Since~$\Xi_{\uh^S}$ contains $\cPdh$,   $\Xi_u\eqdef(\Xi_{\uh^S},\Xi_{\bh^S})$ and $\Xi_U\eqdef0$ satisfy $L\Xi_u=0=L\Xi_U$,
where $L=(\begin{smallmatrix}\nch\,&0\end{smallmatrix})$. Define $\xi_u=(\xi_{\uh^S},\xi_{\bh^S})$ and $\xi_U=0$.
The estimates \eqref{eq:xiuhSest}, \eqref{eq:xibhsest} together with the above-mentioned estimate on the difference of the initial data then imply
that the hypotheses of Theorem~\ref{thm:pert} hold with $k=n$, $r=1$, and $\delta=\epM$. Hence the conclusion of that
theorem yields~\eqref{eq:slowest1}.
\end{proof}

\subsection{Equations and estimates for remaining slow modes}\label{s:otherslow}

The third component of equation \eqref{MHDsy:vu} implies that there are no large terms in the PDE for $u_3^S\eqdef \az u_3$, i.e.,
$\left(0,\left(\begin{smallmatrix}0_2\\u_3^S\end{smallmatrix}\right),0_3\right)$ is a zero
eigenvector of the full large operator $\cLA+\mu\cLM$. However, as shown in Lemma~\ref{lem:bdiv0}, the $\mu$-dependent zero eigenvector of
$\cLA+\mu\cLM$ having a
nontrivial projection onto the density component is not just the slow mode $r^S$. Specifically,
\eqref{eq:lalmz} implies that $\vV_r=\left( \az,0_3,\left(\begin{smallmatrix} 0_2\\-\mu\az\end{smallmatrix}\right)\right) $ satisfies
$(\cLA+\mu\cLM)\vV_r\cdot \widetilde\vV=0$ for all functions $\widetilde\vV$, which by the MHD system \eqref{MHDsy} and the
skew-adjointness of $(\cLA+\mu\cLM)$ implies that the PDE satisfied by $\az (r-\mu b_3)=\vV_r\cdot \vV$ will contain no large terms. We therefore need to
calculate the PDE system satisfied by $\az (r-\mu b_3)$ and $\az u_3$. It turns out that while the PDEs for those two variables are coupled by terms
that are strictly $O(1)$, their coupling to other components of the solution contains only terms that are $o(1)$ and so can be considered as small perturbations.

\begin{thm}
  \label{thm:slowru3}
Under the conditions of Theorem~\ref{thm:slow1},
  \begin{equation}
  \label{eq:slowru3est2}
  \|r^S-\rlim\|_{n-2}+\|u_3^S-\uvlim\|_{n-2}\le c \left[\epM^{1-\max(n-5,0)\nu}+|\mu-\mu_{\lim}|\right].
\end{equation}
 \end{thm}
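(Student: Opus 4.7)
The plan is to derive evolution equations for the two combinations $\tilde r := \az(r-\mu b_3)$ and $u_3^S := \az u_3$ that contain no large terms, match them against the limit equations for $\tilde r_{\lim} := \rlim-\mu_{\lim}\bvlim$ and $\uvlim$, and close via a perturbation estimate of Gronwall type on a symmetric hyperbolic system.

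First I would derive the PDE for $\tilde r$ by taking $\az$ of the density equation \eqref{MHDsy:r} and subtracting $\mu$ times $\az$ of the third component of \eqref{MHDsy:vb}. The identity $\mu/\epA = 1/\epM$ forces the two would-be large terms $\epM^{-1}\nch\az\uh$ to cancel exactly, leaving an $O(1)$ equation. For $u_3^S$, taking $\az$ of the third component of \eqref{MHDsy:vu} kills the pressure gradient term, which equals $\tfrac{1}{\epM^2}\pz p(1+\epM r)$ and thus vanishes under $\az$. After writing everything in conservation form and invoking the identity \eqref{eq:divf}, the decomposition \eqref{eq:modeforms}, and the divergence-free properties \eqref{eq:h:div:0}, the principal slow-slow terms assemble into
\begin{align*}
\pt\tilde r+(\uh^S\cnh)\tilde r+\mu(\bh^S\cnh)u_3^S &= R_1,\\
\pt u_3^S+(\uh^S\cnh)u_3^S-(\bh^S\cnh)\az b_3 &= R_2,
\end{align*}
with $R_1,R_2$ collecting all remaining (non-slow-slow) products.

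The next step would be to estimate $R_1, R_2$ in $H^{n-2}$. Here the $\az$-of-product inequality from \S\ref{sec:azprod} (Corollary~\ref{cor:azproduct}) is crucial: averaging in $z$ gains one factor of $\mu^\nu=\epM^\nu$ over the bare Sobolev product \eqref{eq:sobprod}, because $\az\uh^F$ is a horizontal gradient by \eqref{eq:modeforms}. Combined with the fast-mode bounds \eqref{eq:fastest}, the intermediate-mode dynamic bounds \eqref{eq:intestdyn}, and Theorem~\ref{thm:slow1}, this yields $\|R_1\|_{n-2}+\|R_2\|_{n-2}\le c\,\epM^{1-\max(n-5,0)\nu}$. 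The lone fast quantity $\az b_3^F=\az b_3-\avv b_3^0$ appearing on the left of the $u_3^S$ equation is replaced by the slow proxy $-\mu(r^S-\avv\rlim^0)$ via the static estimate $\|b_3^F+\mu\Deltainv\Delta_h r\|_{n-1}\le c\,\ep_{n-2}$ (noting $\az\Deltainv\Delta_h r=(1-\avv)r^S$) together with the conservation of $\avv r^S$ obtained by averaging \eqref{mod:dyn:az:r}; this introduces the additional term $+\mu(\bh^S\cnh)r^S$ on the left of the $u_3^S$ equation at the price of an error of the same order.

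Comparing with the limit system in the equivalent form $\pt\tilde r_{\lim}+(\uhlim\cnh)\tilde r_{\lim}+\mu_{\lim}(\bhlim\cnh)\uvlim=0$ and $\pt\uvlim+(\uhlim\cnh)\uvlim+\mu_{\lim}(\bhlim\cnh)\rlim=0$, using Theorem~\ref{thm:slow1} to control $\|\uh^S-\uhlim\|_{n-2}+\|\bh^S-\bhlim\|_{n-2}\le c\epM$, and bounding the initial-data difference via \eqref{eq:initfis}, \eqref{eq:initfixed} by $c(\epA+|\mu-\mu_{\lim}|)$, reduces the problem to a symmetric hyperbolic perturbation for $(\tilde r-\tilde r_{\lim},\,u_3^S-\uvlim)$. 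Theorem~\ref{thm:pert} (or equivalently a direct energy estimate) then yields $\|\tilde r-\tilde r_{\lim}\|_{n-2}+\|u_3^S-\uvlim\|_{n-2}\le c[\epM^{1-\max(n-5,0)\nu}+|\mu-\mu_{\lim}|]$. Converting $\tilde r-\tilde r_{\lim}$ back to $r^S-\rlim$ is then purely algebraic via $r^S-\rlim=(\tilde r-\tilde r_{\lim})+\mu(\az b_3-\bvlim)+(\mu-\mu_{\lim})\bvlim$ and the static-based estimate on $\az b_3-\bvlim$. The main technical obstacle is the sharp remainder estimate in the previous paragraph: without the $z$-average product gain from \S\ref{sec:azprod}, the fast-mode factor would only give $\ep_{n-1}$, producing a rate weaker than the claimed $\epM^{1-\max(n-5,0)\nu}$.
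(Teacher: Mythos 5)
Your overall architecture agrees with the paper's: derive large-term-free evolution equations for $\az(r-\mu b_3)$ and $u_3^S$, compare with the limit equations rewritten in terms of $\rlim-\mu_{\lim}\bvlim$ and $\uvlim$, close via a perturbation estimate, and convert back algebraically. You also correctly identify that $\az b_3^F$ must be replaced by its slow proxy and that the static estimate on $b_3^F+\mu\Deltainv\Delta_h r$ enters. However, there is a genuine gap in the remainder estimate, and you have mislocated where the $\epM^{1-\max(n-5,0)\nu}$ loss arises.

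The replacement of $-(\bh^S\cnh)\az b_3^F$ by $\mu(\bh^S\cnh)(r^S-\avv r^S)$ introduces the error term $-(\bh^S\cnh)\az(b_3^F+\mu\Deltainv\Delta_h r)$, with \emph{no} extra factor of $\mu$. Estimated directly via \eqref{eq:fastest} this is $\|(\bh^S\cnh)\az(b_3^F+\mu\Deltainv\Delta_h r)\|_{n-2}\lesssim\|\az(b_3^F+\mu\Deltainv\Delta_h r)\|_{n-1}\lesssim\ep_{n-2}=\epM^{1-(n-3)\nu}$, which is two factors of $\epM^\nu$ weaker than the claimed rate, and Corollary~\ref{cor:azproduct} does not help here because $\bh^S$ is slow, not fast. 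Your ``error of the same order'' assertion therefore does not hold as stated, and putting this loss into $R_1,R_2$ would yield only $\epM^{1-\max(n-3,0)\nu}$ overall. The paper avoids this by \emph{not} simply pushing the error into the remainder. Instead, it uses the algebraic identity \eqref{eq:mubhcnhrs} to express $\mu(\bh^S\cnh)r^S$ in terms of $(\bh^S\cnh)(r^S-\mu\az b_3^F)$ plus a term proportional to $(\bh^S\cnh)\az(b_3^F+\mu\Deltainv\Delta_h r^S)$, and then writes that term as a \emph{total time derivative} $\pt\bigl[\tfrac{1}{1+\mu^2}(\bh^S\cnh)\int_0^t\az(b_3^F+\mu\Deltainv\Delta_h r^S)\bigr]$ plus a lower-order commutator. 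The time-integrated bound \eqref{TA:epA2} gives $\bigl\|\int_0^t\az(b_3^F+\mu\Deltainv\Delta_h r^S)\bigr\|_{n}\le c\,\epM^{1+\nu}$, i.e.\ a full factor of $\mu$ better than the static bound on the integrand, and Theorem~\ref{thm:pert} is designed precisely to absorb terms of the form $A_0\pt\Xi$ with $\Xi$ small. This time-integration mechanism is essential and is absent from your argument; the same $\vA=\int_0^t\az\uh^F$ trick is also needed for the slow-fast products you relegate to $R_1,R_2$.

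A second, smaller correction: in the paper the remainder terms $\xi$ are actually bounded by $c[\epM+|\mu-\mu_{\lim}|]$ (not $\epM^{1-\max(n-5,0)\nu}$), and so Theorem~\ref{thm:pert} delivers $\|(r^S-\mu\az b_3^F)-\{(1+\mu_{\lim}^2)\rlim-\mu_{\lim}^2\avv r^{0,S}\}\|_{n-2}+\|u_3^S-\uvlim\|_{n-2}\le c[\epM+|\mu-\mu_{\lim}|]$. The exponent $\max(n-5,0)\nu$ enters only in the final algebraic conversion from $r^S-\mu\az b_3^F$ back to $r^S$, via \eqref{eq:rfix} and the static estimate $\mu\|\az(b_3^F+\mu\Deltainv\Delta_h r)\|_{n-2}\le c\,\mu\,\ep_{n-3}=c\,\epM^{1-\max(n-5,0)\nu}$. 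Your conversion formula is equivalent to \eqref{eq:rfix}, so that part is fine, but it is the only place the $\nu$-dependent exponent should appear.
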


\begin{proof}
Writing the variables in \eqref{eq:azmub3} in terms of fast, intermediate, and slow components  and using the facts that the slow components are
independent of $z$,  $\uh^S$ and $\bh^S$ have zero horizontal divergence,   the vertical averages of $r^I$, $u_3^I$ and $\bh^F$
vanish, and $b_3^S$ is constant in time as well as space transforms that equation into
\begin{equation}
  \label{eq:azmub3sf}
  \begin{aligned}  \pt &(r^S-\mu\az b_3^F)+(\uh^S\cnh) (r^S-\mu\az b_3^F)+\mu(\bh^S\cnh)u_3^S
\\&=-\az\{\nch[(r-\mu b_3)\uh^F]\}-\mu\az[\nch(u_3^I\bh^F)]
\\&=-\nch[(r^S-\mu\az b_3)\az\uh^F]-\az\nch\Big[\big(r^I-\mu (1-\az) b_3^F\big)\uh^F\Big]
%\\&
{-\mu\az[\nch(u_3^I\bh^F)]}
\end{aligned}
\end{equation}
Replacing $\uh^S$, $\bh^S$, and $\mu$ on the left side of \eqref{eq:azmub3sf} by their limit values, and compensating by adding terms to the right
side yields
\begin{equation}
  \label{eq:rsmazb3}
  \left[ \pt + (\uhlim\cnh)\right](r^S-\mu\az b_3^F)+\mu_{\lim}(\bhlim\cnh)u_3^S=\pt    \Xi_{r^S-\mu\az b_3^F}+\xi_{r^S-\mu\az b_3^F},
\end{equation}
where
\begin{align}\notag
&\Xi_{r^S-\mu\az b_3^F}\eqdef -\nch\left[(r^{S}-\mu{\az} b_3)\int_0^t{\az\uh^F}\right] = -\nch\left[(r^{S}-\mu{\az} b_3)\vA\right]
  \\&\begin{aligned}
    \xi_{r^S-\mu\az b_3^F}&\eqdef \nch\left[(\pt(r^{ S}-\mu{\az} b_3))\vA\right]{-\az\nch\Big[\big(r^I-\mu (1-\az) b_3^F\big)\uh^F\Big]}
    %\\&\qquad
    -\mu\az[\nch(u_3^I\bh^F)]
    \\&\qquad+(\uhlim-\uh^S)\cnh(r^S-\mu\az b_3^F)
    %\\&\qquad
    +\mu ((\bhlim-\bh^S)\cnh)u_3^S
    %\\&\qquad
    +(\mu_{\lim}-\mu)(\bhlim\cnh)u_3^S.
  \end{aligned}
  \label{eq:xiretcdef}
\end{align}
Since $\pt b_3^S\equiv0$, \eqref{eq:azmub3sf} implies a uniform $H^{n-1}$ bound for $\pt(r^S-\mu\az b_3)$. Using in addition the uniform estimate
\eqref{eq:uniform}, estimate \eqref{eq:bigaest} for $\vA$, the estimate \eqref{eq:slowest1} for the convergence rate of the horizontal
components, and Corollary~\ref{cor:azproduct} shows that 
\begin{equation}
  \label{eq:Xiretcest}
  \|\Xi_{r^S-\mu\az b_3^F}\|_{n-1}\le c\,\epM^{1+\nu}, \qquad  \|\xi_{r^S-\mu\az b_3^F}\|_{n-2}\le c\left[ \epM+|\mu-\mu_{\lim}|\right].
\end{equation}

The third component of \eqref{eq:ucons} can be written as
\begin{equation*}
  \pt(\rho u_3)+ \vu\cn(\rho u_3)+(\nc\vu)(\rho u_3) +  \pz {\Phi}-\bh\cnh b_3=0.
\end{equation*}
Applying the vertical averaging operator~$\az$ and
using \eqref{eq:divf} reduces this to
\begin{equation*}
  \pt(\az(\rho u_3))+ \az[\uh\cnh(\rho u_3)]+\az[(\nch\uh)(\rho u_3)] -\az[\bh\cnh b_3^F]=0.
\end{equation*}
In order to treat the term $\az[\bh\cnh b_3^F]$ we write 
$\az b_3^F$ as
\begin{equation}\label{eq:azbrform}
  \begin{aligned}
    \az b_3^F&=-\mu \Deltainv \Delta_h r^S+ \az (b_3^F+\mu\Deltainv\Delta_h r^S)
    %\\&
    =-\mu(r^S-\avv r^S)+ \az (b_3^F+\mu\Deltainv\Delta_h r^S)
    \end{aligned}
\end{equation}
in accordance with the expression estimated in \eqref{eq:fastest}. 
Using \eqref{eq:azbrform} while noting that $(\bh^S\cnh)(\avv r^S)=0$, and using the facts from \eqref{eq:rhodef2}, 
\eqref{eq:ind:z}, and \eqref{eq:modeforms}  that $\rho=1+\epM r$, 
$r^S$ and $u_3^S$ are  independent of
$z$ and  $\az u_3^I=0= u_3^F$, which imply that $\az(\rho u_3^S)=(1+\epM r^S) u_3^S$, $\az(\rho u_3^I)=\epM\az(r^I u_3^I)$, and $\az r=r^S$,
we obtain
\begin{equation}
  \label{eq:4u3:slow}
  \begin{aligned}
  (1+\epM r^S) &\left[\pt +(\uh^S\cnh)\right] u_3^S +u_3^S \left[\pt+(\uh^S\cnh)\right](\epM r^S)+  \mu(\bh^S\cnh) r^S
  \\&=(\bh^S\cnh)[\az (b_3^F+\mu\Deltainv\Delta_h r^S)] +\az\{(\bh^F\cnh)(1-\az)b_3^F\}
  %\\&\quad
  -\epM(\uh^S\cnh)(\az( r^Iu_3^I))
  \\&\quad
  -\epM\pt \az(r^I u_3^I)
  % \\&\quad
    -\nch\big[(\pt\vA)u_3^S\big]-\az\{\nch( \uh^F u_3^I)\}-\epM\az\{\nch(\uh^F ru_3)\},
\end{aligned}
\end{equation}
where the last line results from separating the various modes in $\az\{\nch(\uh^F \rho u_3)\}$ and using the definition of $\vA$ from \eqref{eq:Adef}.
Since there are no terms of size $\epA^{-1}$ in the equations for the time evolution of $r$ or $u_3$, \eqref{eq:4u3:slow} implies that
\begin{equation}
  \label{eq:estptu3s}
  \|\pt u_3^S\|_{n-1}\le c.
\end{equation}

Subtracting $u_3^S$ times \eqref{eq:ptepMrS} from \eqref{eq:4u3:slow},
moving the term $\epM r^S \left[\pt +(\uh^S\cnh)\right] u_3^S$ to the right side of the result, 
noting that the two terms involving $\vA$ partially cancel
and rewriting the remaining term $ (\pt \vA\cnh) u_3^S $ %on the right side of the result
as $\pt [ (\vA\cnh) u_3^S]-(\vA\cnh)\pt u_3^S $
yields
\begin{equation}
  \label{eq:u3:slow}
  \begin{aligned}
&    ( \pt+ (\uh^S\cnh))u_3^S+  \mu(\bh^S\cnh) r^S
%\\&
{= \pt\widetilde\Xi_{u_3^S}+\widetilde\xi_{u_3^S}+(\bh^S\cnh) [\az (b_3^F+\mu\Deltainv\Delta_h r^S)]}
\end{aligned}
\end{equation}
where
\begin{align*}%\label{eq:tilXiu3S}
  &\widetilde\Xi_{u_3^S}= (\vA\cnh) u_3^S -\epM \az(r^I u_3^I),  
  \\
  %\label{eqLtilxiu3S}
    &\begin{aligned}
    \widetilde\xi_{u_3^S} &=
        \az\{(\bh^F\cnh)(1-\az)b_3^F\}
        -\epM(\uh^S\cnh)(\az(r^I u_3^I))
        -\az\{\nch( \uh^F u_3^I)\}
        %\\&\qquad
        -\epM\az\{\nch(\uh^F ru_3)\}
        \\&\quad
        +\epM u_3^S\az\nch(r\uh^F)-(\vA\cnh)(\pt u_3^S)
        %\\&\qquad
        -\epM r^S \left[\pt +(\uh^S\cnh)\right] u_3^S . 
\end{aligned}
\end{align*}

As a step towards symmetrizing the system consisting of \eqref{eq:rsmazb3}, \eqref{eq:u3:slow}, we want to replace $r^S$ in the last term on the left side of
\eqref{eq:u3:slow} by $r^S-\mu\az b_3^F$, which requires adding a balancing term involving $\az b_3^F$, which must also be rewritten using
\eqref{eq:azbrform}. This leads us to write $\mu (\bh^S\cnh)r^S$ as $k_1 (\bh^S\cnh)(r^S-\mu\az b_3^F)+k_2 (\bh^S\cnh)(\az b_3^F+\mu r^S)$. Equating
those two expressions and comparing the coefficients of $(\bh^S\cnh)(\az b_3^F)$ shows that $k_2=k_1\mu$, and then comparing the coefficients of
$(\bh\cnh)r^S$ yields $k_1=\frac{\mu}{1+\mu^2}$, 
\begin{equation}
  \label{eq:mubhcnhrs}
  \begin{aligned}
  {\mu (\bh^S\cnh)r^S }
  %\\&
  &=\tfrac{\mu}{1+\mu^2}(\bh^S\cnh)(r^S-\mu\az b_3^F)+\tfrac{\mu^2}{1+\mu^2}(\bh^S\cnh)(\az b_3^F+\mu r^S)
  \\&=\tfrac{\mu}{1+\mu^2}(\bh^S\cnh)(r^S-\mu\az b_3^F)+\tfrac{\mu^2}{1+\mu^2}(\bh^S\cnh)[\az (b_3^F+\mu\Deltainv\Delta_h r^S)],
\end{aligned}
\end{equation}
where the second equation follows as in \eqref{eq:azbrform}.
Substituting \eqref{eq:mubhcnhrs} into \eqref{eq:u3:slow}, moving the second term from \eqref{eq:mubhcnhrs}
to the right side of the resulting equation and combining it with
the similar term already present,
and replacing $\uh^S$, $\bh^S$ and $\mu$ on the left side of the result by their
limiting values and compensating on the right side
yields
\begin{equation}
  \label{eq:u3S}
    \left[\pt+ (\uhlim\cnh)\right] u_3^S
+  \tfrac{\mu_{\lim}}{1+\mu_{\lim}^2}(\bhlim\cnh) (r^S-\mu\az b_3^F)
=\pt\Xi_{u_3^S}+\xi_{u_3^S}
\end{equation}
where
\begin{align}
  \label{eq:hatXiu3S}
  &\Xi_{u_3^S}=  \widetilde\Xi_{u_3^S}+\tfrac1{1+\mu^2} (\bh^S\cnh)\Big [\int_0^t\az (b_3^F+\mu\Deltainv\Delta_h r^S)\Big]
\\
  \label{eq:hatxiu3S}
  &\begin{aligned}
    \xi_{u_3^S}&=\widetilde\xi_{u_3^S}+
    \tfrac1{1+\mu^2} (\pt\bh^S\cnh)\Big [-\int_0^t\az (b_3^F+\mu\Deltainv\Delta_h r^S)\Big]
    + (\uhlim-\uh^S)\cnh u_3^S
        \\&\quad
    +\tfrac{\mu}{1+\mu^2} ((\bhlim-\bh^S)\cnh(\az b_3^F+\mu r^S)
    %\\&\quad
    +(\tfrac{\mu_{\lim}}{1+\mu_{\lim}^2}-\tfrac{\mu}{1+\mu^2})(\bhlim\cnh)(\az b_3^F+\mu r^S).
\end{aligned}
\end{align}
The system consisting of \eqref{eq:rsmazb3},   \eqref{eq:u3S}  can be symmetrized by multiplying the latter equation by~$1+\mu_{\lim}^2$.
The estimates used to obtain \eqref{eq:Xiretcest} together with the time-derivative estimates \eqref{eq:ptuhsbhs}, \eqref{eq:estptu3s} and the
time-integrated estimate~\eqref{TA:epA2} show that 
\begin{align}
  \label{eq:estXiu3S}
  \|\Xi_{u_3^S}\|_{n-1}\le c\,\epM^{1+\nu},
  \qquad
%  \\\label{eq:estxiu3S}
      \|\xi_{u_3^S}\|_{n-2}\le c \left[\epM+|\mu-\mu_{\lim}|\right].
\end{align}

Using \eqref{eq:limit:b3} and the fact that $\avv r^{0,S}$ is a constant, 
the limit equations \eqref{eq:limit:r} and  \eqref{eq:limit:u3} can be rewritten as the system
\begin{align}
  \label{eq:limit:r:mu0:mod}
  &  \left[\partial_t +(\uhlim\cnh)\right] \{(1+\mu_{\lim}^2)\rlim -\mu_{\lim}^2\avv r^{0,S}\} +\mu_{\lim}(\bhlim\cnh) \uvlim=0,
  \\
\label{eq:limit:u3:mu0:mod}
  &\left[ \partial_t+(\uhlim\cnh)\right]\uvlim+\tfrac{\mu_{\lim}}{1+\mu_{\lim}^2}(\bhlim\cnh)\{(1+\mu_{\lim}^2)\rlim-\mu_{\lim}^2\avv r^{0,S}\}=0,
\end{align}
for the dependent variables $(1+\mu_{\lim}^2)\rlim -\mu_{\lim}^2\avv r^{0,S}$ and $\uvlim$,
which has the same form as the system \eqref{eq:rsmazb3},   \eqref{eq:u3S} for the dependent variables
$r^S-\mu\az b_3$ and $u_3^S$,
except that the terms on the right sides are omitted. 
Since the evolution equation for $r$ shows that $\avv r=\avv r^0=\avv r^{0,S}+\epM \avv r^{0,I}=\avv r^{0,S}$,
\begin{equation}
  \label{eq:rfix}
  \begin{aligned}
  r^S-\mu\az b_3^F&=(1+\mu^2)r^S-\mu^2 \avv r-\mu\az (b_3^F+\mu(r-\avv r))
\\&= \left[(1+\mu^2)r^S-\mu^2 \avv r^{0,S}\right]-\mu\az (b_3^F+\mu \Deltainv\Delta_h r).
\end{aligned}
\end{equation}
Hence, by \eqref{eq:initfis},  \eqref{eq:initfixed}, the difference between the initial data for the two systems is
bounded in $H^n$ by a constant times $\epM^{1+2\nu}+|\mu-\mu_{\lim}|$. In view of that bound plus the estimates \eqref{eq:Xiretcest},
\eqref{eq:estXiu3S} for the right sides of \eqref{eq:rsmazb3}, \eqref{eq:u3S}, Theorem~\ref{thm:pert} shows that
\begin{equation}
  \label{eq:slowru3est}
  \begin{aligned}
  \|(r^S-\mu\az b_3^F)-
\{(1+\mu_{\lim}^2)\rlim -\mu_{\lim}^2\avv r^{0,S}\} \|_{n-2}&+
\|u_3^S-\uvlim\|_{n-2}
%\\&
\le c \left[\epM +|\mu-\mu_{\lim}|\right].
\end{aligned}
\end{equation}
By \eqref{eq:rfix}, the static estimate \eqref{eq:fastest} with $j=n-3$ applied 
to the $-\mu\az (b_3^F+\mu \Deltainv\Delta_h r)$ term of \eqref{eq:rfix}
shows that \eqref{eq:slowru3est} implies that \eqref{eq:slowru3est2} holds.
\end{proof}

%\begin{remark}
  As discussed in the introduction, the term $|\mu-\mu_{\lim}|$ is the dominating error term in \eqref{eq:slowru3est} and \eqref{eq:slowru3est2} whenever $\mu_{\lim}=0$, but
  that term will be eliminated in Theorem~\ref{thm:ru3cor} below by adding corrector terms.
  % The only term yielding an error of size
  % $O(\epM^{1-\max(n-5,0)\nu})$ in \eqref{eq:slowru3est2} is $-\mu\az (b_3^F+\mu \Deltainv\Delta_h r)$ in \eqref{eq:rfix}, so that expression can be
  % viewed as a next-order corrector. Similarly, the only terms that yield an error of size $O(\epM)$ rather than $O(\epM^{1+\nu})$ in \eqref{eq:slowest1}
  % and  \eqref{eq:slowru3est} 
  % are the expressions $-\epM r^S (\pt+(\uh^S\cnh))\uh^S$ in \eqref{eq:xiuhSdef}, $(\uhlim-\uh^S)\cnh r^S$ in \eqref{eq:rsmazb3} in view
  % of estimate \eqref{eq:slowest1}, and   $-\epM r^S \left[\pt +(\uh^S\cnh)\right] u_3^S$ in \eqref{eq:hatxiu3S}.
%  \end{remark}

\begin{thm}\label{thm:ru3cor}
Let $(\rcor,\uvcor)$ be the solution of the inhomogeneous linear system
      \begin{align}
        \label{eq:rcor}
        &\begin{aligned}
          \pt \rcor+(\uhlim\cnh)\rcor& +\mu(\bhlim\cnh)\uvcor
          %\\&
          =-(\bhlim\cnh)\uvlim-(\mu+\mu_{\lim})(\pt+(\uhlim\cnh))\rlim,
        \end{aligned}
        \\
        &\begin{aligned}\pt\uvcor&+(\uhlim\cnh)\uvcor+\tfrac{\mu}{1+\mu^2}(\bhlim\cnh)\rcor
          %\\&
          =
          -\tfrac{1-\mu\mu_{\lim}}{(1+\mu^2)(1+\mu_{\lim}^2)}(\bhlim\cnh)((1+\mu^2)\rlim-\mu^2\avv r^{0,S})
        \end{aligned}
        \label{eq:uvcor}
      \end{align}
having initial data zero. If the conditions of Theorem~\ref{thm:slow1} hold then
\begin{equation}
  \begin{aligned}
    \|r^S-(\rlim+\tfrac{\mu-\mu_{\lim}}{1+\mu^2}\rcor)\|_{n-2}+\|u_3^S-(\uvlim+(\mu&-\mu_{\lim})\uvcor)\|_{n-2}
    %\\&
    \le c\, \epM^{1-\max(n-5,0)\nu}.
%  \epM^{\min(1-(n-2)\nu,\max(1-(n-1)\nu,2\nu))}.
\end{aligned}
\label{eq:slowest2b}
\end{equation}
\end{thm}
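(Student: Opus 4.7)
The plan is to construct a corrected limit-type system satisfied by $(\tilde r, \tilde u_3) := (\rlim + \tfrac{\mu-\mu_{\lim}}{1+\mu^2}\rcor,\, \uvlim + (\mu-\mu_{\lim})\uvcor)$ and to compare it with the slow-mode system for $(r^S-\mu\az b_3^F, u_3^S)$ via Theorem~\ref{thm:pert}, as in the proof of Theorem~\ref{thm:slowru3}. The correctors \eqref{eq:rcor} and \eqref{eq:uvcor} are engineered so that $(\tilde r, \tilde u_3)$ satisfies the \emph{$\mu$-version} of \eqref{eq:limit:r:mu0:mod}--\eqref{eq:limit:u3:mu0:mod}, i.e.\ the same system but with $\mu_{\lim}$ replaced everywhere by $\mu$ on the left sides; this is precisely what eliminates the $|\mu-\mu_{\lim}|$ contribution that dominates \eqref{eq:slowru3est2} when $\mu_{\lim}=0$.

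First I would verify this design. Using \eqref{eq:rcor}, \eqref{eq:uvcor}, \eqref{eq:limit:r:mu0:mod}, \eqref{eq:limit:u3:mu0:mod}, and the fact that $(\bhlim\cnh)\avv r^{0,S}=0$ (since $\avv r^{0,S}$ is constant), a direct algebraic computation yields
\begin{align*}
  [\pt+(\uhlim\cnh)]\tilde r +\tfrac{\mu}{1+\mu^2}(\bhlim\cnh)\tilde u_3 &=0,\\
  [\pt+(\uhlim\cnh)]\tilde u_3+\tfrac{\mu}{1+\mu^2}(\bhlim\cnh)\bigl[(1+\mu^2)\tilde r-\mu^2\avv r^{0,S}\bigr] &=\sigma,
\end{align*}
where the residual simplifies to $\sigma=(\mu-\mu_{\lim})\tfrac{\mu_{\lim}(\mu+\mu_{\lim})}{1+\mu_{\lim}^2}(\bhlim\cnh)\rlim$ and \emph{vanishes identically when $\mu_{\lim}=0$}. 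In parallel I would rerun the derivation of \eqref{eq:rsmazb3} and \eqref{eq:u3S} from \eqref{eq:azmub3sf} and \eqref{eq:u3:slow} as in the proof of Theorem~\ref{thm:slowru3}, but without replacing $\mu$ by $\mu_{\lim}$ on the left sides; the resulting right-side remainders $\xi'$ omit the $(\mu_{\lim}-\mu)(\bhlim\cnh)$-type compensation terms present in $\xi_{r^S-\mu\az b_3^F}$ and $\xi_{u_3^S}$, while substituting $\uh^S, \bh^S\to\uhlim, \bhlim$ via Theorem~\ref{thm:slow1} introduces only $O(\epM)$-size terms. Multiplying the corrected equations above by $1+\mu^2$ turns them into a symmetric hyperbolic system for $((1+\mu^2)\tilde r-\mu^2\avv r^{0,S}, \tilde u_3)$ of the same form as the $\mu$-version of \eqref{eq:rsmazb3}--\eqref{eq:u3S}.

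Theorem~\ref{thm:pert} applied to these two systems then yields
\[
\|(r^S-\mu\az b_3^F)-[(1+\mu^2)\tilde r-\mu^2\avv r^{0,S}]\|_{n-2}+\|u_3^S-\tilde u_3\|_{n-2}\le c\,\epM,
\]
because by \eqref{eq:initfis}, \eqref{eq:initfixed}, and the vanishing initial data for $\rcor,\uvcor$, the initial-data difference is $O(\epM^{1+2\nu})$, while the right-side differences are $O(\epM)$ ($\sigma$ being absent in the main case). Identity \eqref{eq:rfix} then gives $(r^S-\mu\az b_3^F)-[(1+\mu^2)\tilde r-\mu^2\avv r^{0,S}]=(1+\mu^2)(r^S-\tilde r)-\mu\az(b_3^F+\mu\Deltainv\Delta_h r)$, and the static estimate \eqref{eq:fastest} yields $\|\mu\az(b_3^F+\mu\Deltainv\Delta_h r)\|_{n-2}\le c\,\epM^{1-\max(n-5,0)\nu}$ (valid for every $n\ge 3$), so the triangle inequality and division by $1+\mu^2$ yield \eqref{eq:slowest2b}. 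The main technical obstacle is the algebraic coefficient-matching step, verifying that the specific forcings on the right sides of \eqref{eq:rcor} and \eqref{eq:uvcor} conspire to produce exactly the cross-coupling $\tfrac{\mu}{1+\mu^2}(\bhlim\cnh)$ with vanishing residual $\sigma$ when $\mu_{\lim}=0$.
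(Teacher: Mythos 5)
Your proposal follows the same strategy as the paper's own proof: add $\mu-\mu_{\lim}$ times the corrector equations to \eqref{eq:limit:r:mu0:mod}--\eqref{eq:limit:u3:mu0:mod} to obtain the ``$\mu$-version'' of the slow-mode system, compare it with the slow equations via Theorem~\ref{thm:pert}, and convert back through \eqref{eq:rfix} and the static estimate \eqref{eq:fastest}. However, your careful algebra uncovers something the paper's derivation of \eqref{eq:u3pluscor} glosses over: adding $\mu-\mu_{\lim}$ times \eqref{eq:uvcor} to \eqref{eq:limit:u3:mu0:mod} does \emph{not} produce \eqref{eq:u3pluscor} with right side identically zero. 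Since \eqref{eq:limit:u3:mu0:mod} involves $(1+\mu_{\lim}^2)\rlim-\mu_{\lim}^2\avv r^{0,S}$ while the forcing in \eqref{eq:uvcor} involves $(1+\mu^2)\rlim-\mu^2\avv r^{0,S}$, the combination leaves behind exactly the residual
\begin{equation*}
\sigma=(\mu-\mu_{\lim})\tfrac{\mu_{\lim}(\mu+\mu_{\lim})}{1+\mu_{\lim}^2}(\bhlim\cnh)\rlim,
\end{equation*}
coming from $\tfrac{\mu_{\lim}}{1+\mu_{\lim}^2}(\bhlim\cnh)\{(\mu_{\lim}^2-\mu^2)\rlim\}$, and you compute this correctly. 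The residual vanishes identically in the main case $\mu_{\lim}=0$, where your proof and the paper's coincide exactly, and when $\mu=\mu_{\lim}$.

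For $\mu_{\lim}>0$ and $\mu\ne\mu_{\lim}$ the residual $\sigma$ would enter as an additional $\xi_U$-type forcing of size $O(\mu_{\lim}|\mu-\mu_{\lim}|)$ in Theorem~\ref{thm:pert}, which produces an extra $O(|\mu-\mu_{\lim}|)$ term in the final estimate that is not controlled by the stated bound $c\,\epM^{1-\max(n-5,0)\nu}$, since $|\mu-\mu_{\lim}|$ is in general unrelated to $\epM$. You flag this by noting $\sigma$ vanishes only ``in the main case'' but do not resolve it; this same gap is present in the paper's claim that \eqref{eq:u3pluscor} holds with right side zero. Apart from this shared $\mu_{\lim}>0$ issue, your remaining steps --- obtaining the $\mu$-version of \eqref{eq:rsmazb3}--\eqref{eq:u3S} with only $O(\epM)$ residuals (using Theorem~\ref{thm:slow1} to replace $\uh^S,\bh^S$ by $\uhlim,\bhlim$), checking the $O(\epM^{1+2\nu})$ initial-data difference, applying Theorem~\ref{thm:pert}, and converting via \eqref{eq:rfix} with $\mu\ep_{n-3}=\epM^{1+(5-n)\nu}\le\epM^{1-\max(n-5,0)\nu}$ --- all match the paper and are correct.
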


\begin{proof}
  Since $(\mu-\mu_{\lim})(\mu+\mu_{\lim})=\mu^2-\mu_{\lim}^2$ and $\avv r^{0,S}$ is a constant,
  adding $\mu-\mu_{\lim}$ times \eqref{eq:rcor} to \eqref{eq:limit:r:mu0:mod} yields
  \begin{equation}
    \label{eq:rpluscor}
    \begin{aligned}&\left[\partial_t +(\uhlim\cnh)\right] \{(1+\mu^2)\rlim -\mu^2\avv r^{0,S}+(\mu-\mu_{\lim})\rcor\}
      %\\&\qquad
      +\mu(\bhlim\cnh) (\uvlim+(\mu-\mu_{\lim})\uvcor)=0,
  \end{aligned}
\end{equation}
Similarly, since $(\mu-\mu_{\lim})$ times $\tfrac{1-\mu\mu_{\lim}}{(1+\mu^2)(1+\mu_{\lim}^2)}$ equals $\frac{\mu}{1+\mu^2}-\frac{\mu_{\lim}}{1+\mu_{\lim}^2}$, adding
$\mu-\mu_{\lim}$ times \eqref{eq:uvcor} to \eqref{eq:limit:u3:mu0:mod} yields
 \begin{equation}
    \label{eq:u3pluscor}
    \begin{aligned}&\left[\partial_t +(\uhlim\cnh)\right] \{\uvlim+(\mu-\mu_{\lim})\uvcor\}
      %\\&\qquad
      +\tfrac{\mu}{1+\mu^2}(\bhlim\cnh) \{(1+\mu^2)\rlim -\mu^2\avv r^{0,S}+(\mu-\mu_{\lim})\rcor\}=0.
  \end{aligned}
\end{equation}
Equations \eqref{eq:rpluscor}--\eqref{eq:u3pluscor} have the same form as as the system \eqref{eq:rsmazb3}, \eqref{eq:u3S} for the dependent variables
$r^S-\mu\az b_3$ and $u_3^S$, except that the terms on the right sides are omitted and all occurrences of $\mu_{\lim}$ in the coefficients
on the left sides are replaced
by $\mu$. Omitting the step of replacing  $\mu$ by $\mu_{\lim}$ in  the derivation of \eqref{eq:rsmazb3}, \eqref{eq:u3S}  yields those equations with 
all occurrences of $\mu_{\lim}$ on the left sides replaced by $\mu$ and the terms of order $\mu-\mu_{\lim}$ omitted from their right sides.
Since the terms of order $\mu^2$ in \eqref{eq:rpluscor} now involve $\mu^2$ rather than $\mu_{\lim}^2$, as in \eqref{eq:rfix} and in contrast to \eqref{eq:limit:r:mu0:mod},
there is also no longer a term of size $O(|\mu-\mu_{\lim}|)$ in the difference in the initial data. Hence
applying Theorem~\ref{thm:pert} now yields an estimate without the term involving $|\mu-\mu_{\lim}|$, and by using \eqref{eq:rfix} the estimate
so obtained can be written as \eqref{eq:slowest2b}. 
\end{proof}

\appendix
\section{Derivation of the MHD system}\label{sec:curl}

Suitably scaled, the motion of an isentropic compressible, conducting, inviscid fluid is
modeled by the MHD system (\cite[\S 3.8]{Davidson})
 \begin{subequations}\label{MHD:0}\begin{align}
 \label{rho:0}\pt\rho+\nc(\rho\vu)&=0\\
 \label{vu:0}\pt(\rho\vu)+\vu\cn (\rho\vu)+(\nc\vu)\rho\,\vu+\epM^{-2}
 \grad p(\rho)+\epA^{-2}{{ \vB\times( {\nabla\!\times\!}\vB)  } } &=0,\\
\label{vB:0}\pt\vB-{\nabla\!\times\!}(\vu\times\vB)&=0,\\
 {\nc\vB}&=0.
\end{align}
\end{subequations}
Here $\epM$ denotes the well-known Mach number, $\rho$ is the fluid density, $p(\rho)$ is the pressure
law that satisfies $p'>0$,  $\vu$ is the fluid velocity, and $\vB$ is the magnetic field.
The parameter $\epA$, as we call
the \Al number in this article, is the ratio between flow velocity and speed
of the magnetosonic waves; in \cite{KM81} the \Al number is the
reciprocal of our version.

We consider the case in which a uniform magnetic field is applied in the
direction~$\vez$ parallel to the $z$-axis, which subjects the fluid to a large
Lorenz force. To reformulate the system \eqref{MHD:0} into a form to which the
results of \cite{CJS17} can be applied,  we begin by rescaling the magnetic field and the density via
\begin{equation}\label{eq:rhodef}
 \vB=\vez+\epA\vb,\qquad \rho=1+\epM r.
\end{equation}
Applying calculus identities for the curl,
subtracting $\vu$ times \eqref{rho:0} from
\eqref{vu:0}, and multiplying~\eqref{rho:0} by $a(\epM r)$ from \eqref{eq:rhodef2}
yields the system \eqref{MHDsy}.

%Taking the divergence of \eqref{vB:0} and using the fact that the divergence
%of a curl vanishes shows that $\pt\nc\vb=0$. Similarly, the spatial integrals $\iiint b_3\,dxdydz$ and $\iiint r\,dxdydx$ are conserved by the
%dynamics.

\section{Improved Uniform Bound}\label{sec:bound}

\begin{lem}\label{lem:bound}
      Let $n\ge s_0+1$ be an integer, where $s_0\eqdef\lfloor \frac d2\rfloor+1$ is the Sobolev
embedding exponent, i.e., the smallest integer $s$ for which
$\|f\|_{L^\infty}\le c\|f\|_{H^s}$.
Assume that the spatial domain is $\mR^d$ or $\mT^d$ and that the system \eqref{eq:general} and its initial data $\vV^0$ satisfy the following conditions:
\begin{enumerate}
\item the operators $\cLA$ and $\cLM$ are  constant-coefficient
  differential operators of order at most one and are skew-adjoint on $L^2$,

\item the matrices $A_i$ are smooth symmetric functions for $j\ge0$ and the
  matrix $A_0$ is positive definite,

\item  the small parameters are restricted to the region \eqref{eq:epAepMgen},

\item the initial data $\vV^0$, which may depend on the small parameters $\epA$ and $\epM$, are  uniformly bounded
  in $H^n$ and satisfy  the ``well-preparedness'' condition \eqref{eq:wellprep}.
\end{enumerate}

 Then there
  exist fixed positive  $T$ and $K$ such that for $(\epA,\epM)$ satisfying \eqref{eq:epAepMgen}
  the solution to \eqref{eq:general}
  having
  the initial data $\vV^0$ exists for $0\le t\le T$ and satisfies  \eqref{eq:uniform};
in particular the solution is uniformly bounded in $H^n$.

\end{lem}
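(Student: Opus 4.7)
The plan is to establish \eqref{eq:uniform} in two stages: first, a uniform $H^n$ bound on $\vV$ together with local existence on an interval $[0,T]$ independent of $(\epA,\epM)$; second, weighted bounds on the higher time derivatives $\pt^j\vV$ proved by induction on $j$.

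For the $H^n$ bound, I would apply the classical energy method for symmetric hyperbolic systems: differentiate \eqref{eq:general} by $D^\alpha$ for $|\alpha|\le n$, take the $L^2$ inner product with $A_0(\epM\vV)D^\alpha\vV$, and sum. Since $\cLA$ and $\cLM$ are constant-coefficient skew-adjoint operators, they commute with $D^\alpha$, and so the large terms $\epA^{-1}\cLA\vV$ and $\epM^{-1}\cLM\vV$ contribute zero to the energy identity. Standard Moser commutator estimates handle the nonlinear terms $\sum_i A_i(\vV)\vV_{x_i}$, and the time derivative of $A_0(\epM\vV)$ produces only $O(\epM\|\vV_t\|_{L^\infty})$ terms, controlled via the equation itself by $C(\|\vV\|_n)$. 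This closes the estimate on an interval $[0,T]$ uniform in $(\epA,\epM)$ and yields $\|\vV\|_n\le K$, as in \cite[Theorem 3.6]{CJS17}.

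For the time-derivative bounds, I would proceed by induction on $j$. The base case $j=1$ follows by solving \eqref{eq:general} for $\vV_t$ at $t=0$: the well-preparedness \eqref{eq:wellprep} together with the $H^n$ bound on $\vV^0$ gives $\|\vV_t(0)\|_{n-1}\le c$, and propagation to $t\in(0,T]$ uses an $H^{n-1}$ energy estimate for the $\pt$-differentiated equation; the skew-adjoint large terms again drop out, and the remaining commutators are controlled by Gr\"onwall using the $H^n$ bound on $\vV$. The induction step is analogous: apply $\pt^{j-1}$ to \eqref{eq:general}, perform an $H^{n-j}$ energy estimate, eliminate the large terms by skew-adjointness, and control the commutators $[\pt^{j-1},A_i(\vV)]\vV_{x_i}$ and $[\pt^{j-1},A_0(\epM\vV)]\vV_t$ using the bounds on $\pt^i\vV$ for $i<j$ supplied by the inductive hypothesis. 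The initial values $\pt^j\vV(0)$ are computed recursively from the PDE and \eqref{eq:wellprep}.

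The main obstacle is the careful bookkeeping of powers of $\epA^{-1}$ and $\epM^{-1}$ required to obtain the weight $\epM^{j-1}(\min(\epA/\epM,1))^{n-1}$ in \eqref{eq:uniform}. For $j\ge 2$ the recursive formula for $\pt^{j+1}\vV$ formally carries factors of $\epA^{-1}$ coming from $\epA^{-1}\cLA\pt^j\vV$; well-preparedness supplies one cancellation at $t=0$, but the remaining factors must be absorbed via the scaling restriction \eqref{eq:epAepMgen}, which forces $(\epM/\epA)^{n-1}\le C\epM^{-1}$ and so makes each excess $\epA^{-1}$ no worse than a factor of $\epM^{-1}$ in the final bound. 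This is precisely why the exponent $n-1$ on $\min(\epA/\epM,1)$ is optimal and why the restriction $\epA\ge c\,\epM^{1+1/(n-1)}$ appears. The improvement over \cite[Theorem 3.6]{CJS17} is in carrying this bookkeeping uniformly across all $j$ from $1$ to $n$, rather than only for the first time derivative.
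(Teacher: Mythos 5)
Your outline re-derives the energy estimates from scratch, while the paper's actual proof is deliberately much shorter: it observes that Lemma~\ref{lem:bound} differs from \cite[Theorem~3.6]{CJS17} \emph{only} in the weights $w_j$ multiplying $\|\pt^j\vV\|_{n-j}$, and that in the entire proof of that theorem the weights enter in exactly two places -- (i) the bound on the weighted energy $E$ defined in \eqref{eq:Edef} at $t=0$, and (ii) the scaling-out of $\epA,\epM$ from $\ddt E$. For (ii) the paper isolates the precise algebraic conditions \eqref{eq:weightconds} that any choice of weights must satisfy, and then checks that the new weights $w_j=\epM^{j-1}(\min(\epA/\epM,1))^{n-1}$ satisfy them under \eqref{eq:epAepMgen}. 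Your proposal buries this structure: the ``weight conditions'' \eqref{eq:w1cond}--\eqref{eq:weightcombine} are the real content, and they arise not from a step-by-step induction on $j$ but from the Leibniz expansion of the coefficients, which produces \emph{products} $\prod_j\|\pt^{k_j}\vV\|$ with $\sum k_j=k$ or $k+1$. That is why CJS17 (and the paper) close a single Gr\"onwall inequality for the combined weighted energy $E$ rather than proving the bounds one derivative at a time; a literal induction on $j$ would have to carry the coupled weight bookkeeping explicitly anyway, at which point you are just reproving the CJS17 estimate with different constants.

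Two of your side remarks are also inaccurate. First, the improvement over CJS17 is \emph{not} ``carrying the bookkeeping across all $j$'': CJS17's estimate \eqref{eq:oldest} already bounds $\|\pt^j\vV\|_{n-j}$ for all $j\le n$, with the weight $\epM^j$; the improvement is in replacing $\epM^j$ by the larger weight $\epM^{j-1}(\min(\epA/\epM,1))^{n-1}$. Second, the paper does not assert (and the proof does not show) that the exponent $n-1$ is optimal -- it only shows that conditions \eqref{eq:w1cond} and \eqref{eq:weightbreak} reduce to $\epM\le c(\min(\epA/\epM,1))^{n-1}$, which is exactly the constraint \eqref{eq:epAepMgen}; optimality of the exponent would require a matching lower bound, which is not attempted. (There is also a small indexing slip in your induction step: applying $\pt^{j-1}$ to the equation calls for an $H^{n-(j-1)}$ energy estimate, not $H^{n-j}$.)

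That said, your identification of the key mechanism -- well-preparedness controls $\|\vV_t(0)\|_{n-1}$, the PDE then gives $\|\pt^j\vV(0)\|_{n-j}\lesssim\epA^{1-j}$, and the scaling restriction \eqref{eq:epAepMgen} is precisely what converts the surplus factors of $\epA^{-1}$ into something no worse than $\epM^{-1}$ -- is exactly what makes (i) above go through, so the proposal has the right substance even if it reinvents the machinery the paper imports from CJS17.
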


\begin{proof}
  Lemma~\ref{lem:bound}
  differs from Theorem~3.6 of \cite{CJS17}
  only by having different weights multiplying the norms of time derivatives. Hence it suffices
  to show that in all places in the proof of \cite[Theorem 3.6]{CJS17} where
  the use of the weights
\begin{equation}
  \label{eq:oldest}
  \max_{0\le t\le T} \left[ \sum_{j=0}^n\epM^{j}\|\pt^j \vV\|_{{n-j}}+\|\vV_t\|_{0}  \right]\le K
\end{equation}
  was justified the use of the improved
  weights in \eqref{eq:uniform} is also justified.
  There are only two such places,
    namely where it was shown that the weighted sum of norms is bounded
  at time zero and where it was shown that the small parameters scale out of
  the estimate for the time derivative of an appropriately  weighted
  energy.

  As noted in \cite[proof of Lemma 3.5]{CJS17}, assumption
  \eqref{eq:wellprep} ensures that $\|\vV_t\eval{t=0}\|_{{n-1}}$ is bounded uniformly
  in the small parameters and the PDE~\eqref{eq:general} then yields the estimates
  $\|\pt^j\vV\eval{t=0}\|_{{n-j}}\le c\,\epA^{1-j}$ for $1\le j\le n$. Therefore, for $1\le
  j\le n$,
  \begin{equation*}
    \begin{aligned}
    \epM^{j-1}&\left(\min\left(\tfrac{\epA}{\epM},1\right)\right)^{n-1}\|\pt^j\vV\eval{t=0}\|_{{n-j}}\le
    \epM^{j-1}\left(\min\left(\tfrac{\epA}{\epM},1\right)\right)^{n-1}
    \left( c\,\epA^{1-j}\right)
    %\\&=c\left(\min\left(\tfrac{\epA}{\epM},1\right)\right)^{n-1}\left(\tfrac{\epA}{\epM}\right)^{1-j}
    %\le c\left(\min\left(\tfrac{\epA}{\epM},1\right)\right)^{n-1} \left(\min\left(\tfrac{\epA}{\epM},1\right)\right)^{1-j}
    %\\&
    \le  c \left(\min\left(\tfrac{\epA}{\epM},1\right)\right)^{n-j}\le c,
  \end{aligned}
\end{equation*}
which shows that the weighted sum of norms in \eqref{eq:uniform} is also
bounded at time zero uniformly in the small parameters.

The energy estimate both in \cite{CJS17} and here makes use of the norms
\begin{equation}
  \label{eq:normdef}
  \|f\|_{\ell,A_0}\eqdef \sqrt{\sum_{0\le|\alpha|\le \ell}\int
    (D^\alpha f)^TA_0(\epM \vV)D^\alpha f\,dx},
\end{equation}
where $\vV$ is a solution to \eqref{eq:general} and
$D^\alpha=\partial_{x_1}^{\alpha_1}\cdots\partial_{x_d}^{\alpha_d}$.
As shown in \cite{CJS17}, in
order to prove a weighted energy estimate like \eqref{eq:oldest} or
\eqref{eq:uniform} it suffices to obtain a uniform bound for
\begin{equation}\label{eq:Edef}
  E\eqdef\|\vV\|_{n,A_0}^2+\|\vV_t\|_{0,A_0}^2+\sum_{j=1}^n
  w_j^2\|\pt^j\vV\|_{n-j,A_0}^2,
\end{equation}
where the weights $w_j$ are $\epM^{j}$ for the
estimate \eqref{eq:oldest} or
\begin{equation}\label{eq:weightsdef}
  w_j=\epM^{j-1}\left(\min\left(\tfrac{\epA}{\epM},1\right)\right)^{n-1},\quad 1\le j\le n
\end{equation}
for the estimate~\eqref{eq:uniform}.  
Moreover, in the estimates \cite[(3.12),  (3.24)]{CJS17} for $\ddt E$, the
only facts used about the weights $w_j$ to prove a uniform bound for $E$ are
that for some finite $c$ that may be different in each appearance
\begin{subequations}\label{eq:weightconds}
\begin{align}
  \label{eq:w1cond}
  \epM&\le c\, w_1,
  \\
  \label{eq:wjj+1}
  \epM w_j&\le c\, w_{j+1} &&\text{for $1\le j\le n-1$,}
  \\
  \label{eq:weightbreak}
  w_k&\le c \prod_{j=1}^J w_{k_j}&&\text{whenever $\sum_{j=1}^J k_j=k$,}
\\%\noalign{and}
  \label{eq:weightcombine}
  \epM w_k&\le c \prod_{j=1}^J w_{k_j}&&\text{whenever $\sum_{j=1}^J k_j=k+1$.}
\end{align}
\end{subequations}
Since \eqref{eq:weightcombine} can be obtained by substituting
\eqref{eq:weightbreak} with $k$ replaced by $k+1$ into \eqref{eq:wjj+1} with
$j$ set equal to $k$, it
suffices to prove \eqref{eq:w1cond}--\eqref{eq:weightbreak}. The
definitions~\eqref{eq:weightsdef} imply that \eqref{eq:wjj+1} holds provided
that $c$ there is at least one, while both \eqref{eq:w1cond} and \eqref{eq:weightbreak} reduce to
the condition $\epM\le c \left(\min\left(\frac{\epA}{\epM},1\right)\right)^{n-1}$ that is
equivalent to \eqref{eq:epAepMgen}. 
\end{proof}

Combining estimate \eqref{eq:uniform} with the standard Sobolev interpolation inequality
$\|f\|_{r}\le C_{r,s}\|f\|_s^{\frac rs}\|f\|_0^{1-\frac rs}$  for $0\le r\le s$
   (e.g. \cite[(2.32)]{Ma84}), yields the following result.
\begin{cor}\label{cor:unifinterp}
  When the basic conditions of Theorem~\ref{thm:theom1} hold and $\mu\le1$ then
  \begin{equation}
    \label{eq:vtinterp}
    \|\vV_t\|_j\le c \mu^{-j}=c\,\epM^{-j\nu} \qquad j=0,\ldots,n-1.
  \end{equation}
\end{cor}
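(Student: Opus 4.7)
The plan is straightforward: the corollary is an immediate two-step consequence of the uniform bound \eqref{eq:uniform} combined with the cited Sobolev interpolation inequality, which is why the result is presented as a corollary rather than a theorem.

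First I would extract two endpoint bounds on $\vV_t$ from \eqref{eq:uniform}. The bracketed expression there directly yields $\|\vV_t\|_0 \le K$. Moreover, the $j=1$ summand of $\sum_{j=1}^n \epM^{j-1}(\min(\tfrac{\epA}{\epM},1))^{n-1}\|\pt^j\vV\|_{n-j}$ has weight $\epM^{0}\cdot(\min(\mu,1))^{n-1}$; under the hypothesis $\mu\le 1$ this simplifies to $\mu^{n-1}$, giving
\[
  \mu^{n-1}\|\vV_t\|_{n-1}\le K, \qquad \text{i.e.,}\qquad \|\vV_t\|_{n-1}\le K\mu^{-(n-1)}.
\]

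Next I would apply the stated Sobolev interpolation $\|f\|_r\le C_{r,s}\|f\|_s^{r/s}\|f\|_0^{1-r/s}$ with $s=n-1$ and $r=j$ for each $j\in\{0,\ldots,n-1\}$ to $f=\vV_t$:
\[
  \|\vV_t\|_j \le C\|\vV_t\|_{n-1}^{j/(n-1)}\|\vV_t\|_0^{\,1-j/(n-1)} \le C\bigl(K\mu^{-(n-1)}\bigr)^{j/(n-1)} K^{\,1-j/(n-1)} = cK\mu^{-j}.
\]
The endpoint cases $j=0$ and $j=n-1$ are automatically consistent since the interpolation degenerates to the respective endpoint bound. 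Finally, the identity $\mu=\epM^{\nu}$ from \eqref{eq:nudef} converts $\mu^{-j}$ into $\epM^{-j\nu}$, yielding the claimed inequality \eqref{eq:vtinterp}.

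There is no genuine obstacle to this argument; the only point that required the improvement from \cite{CJS17} is the presence of the factor $(\min(\mu,1))^{n-1}$ in \eqref{eq:uniform} (as opposed to merely $\epM$), which is precisely what makes the $j=1$ term of \eqref{eq:uniform} yield a nontrivial $H^{n-1}$ control of $\vV_t$ suitable for interpolation against the $L^2$ bound.
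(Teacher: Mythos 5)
Your proof is correct and follows exactly the route the paper intends: extracting the two endpoint bounds $\|\vV_t\|_0\le K$ and $\mu^{n-1}\|\vV_t\|_{n-1}\le K$ from the $j=1$ term of \eqref{eq:uniform}, then interpolating. The paper states this as a one-line consequence of \eqref{eq:uniform} and the cited interpolation inequality without writing out the details, and your elaboration matches it precisely.
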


\section{Convergence and Limit}\label{sec:conv}

The convergence part of Theorem~\ref{thm:theom1} follows from \cite[Theorem 4.6]{CJS17} when $\mu_{\lim}=0$, and from simple modifications of convergence
results for two-scale singular limits when $\mu_{\lim}>0$. Since we need explicit formulas for the limit equations and will use some of the formulas
derived below in \S\ref{sec:rate} we indicate a direct unified proof.

\begin{proof}[Proof of the convergence part of Theorem~\ref{thm:theom1}]

  The uniform bounds on $\vV$ and $\vV_t$ provide compactness, which together with the uniqueness of solutions to the limit equations ensures
  the convergence of $\vV$,  in $C^0([0,T];H^{n-\alpha})$ for any $\alpha>0$ and weak-$*$ in $L^\infty([0,T];H^n)$,
  to a limit $\vVlim$ as $\epA,\epM$ tend to zero with their ratio converging to a given limit $\mu_{\lim}$, with $\vV_t$
 converging weak-$*$ in $L^\infty([0,T],H^{n-1})$ to $\vVlim_t$.

 Multiplying \eqref{eq:general} by $\epA$  or applying $\epM\mathbb{P}^0$ to it, and taking the limit yields
\begin{equation}\label{eq:PgenVlim0}
   (\cLA+\mu_{\lim}\cLM)\vVlim=0=\mathbb{P}^0\cLM\vVlim. 
 \end{equation}
Identities \eqref{eq:PgenVlim0} and Lemma~\ref{lem:bdiv0} imply that
$\vVlim$ is independent of $z$, the horizontal parts of its velocity and magnetic field are divergence free, and
\eqref{eq:limit:b3} holds.
If the spatial domain is $\mR^3$ then $\vVlim$ must therefore vanish, so from now on that domain is $\mT^3$.
By \eqref{def:P0} and \eqref{eq:lalmv}, $\mathbb{P}\left(\fr{\epA}\cLA+\fr{\epM}\cLM\right) =(\tfrac1{\epM}\az\nch\uh, 0_3, 0_3)$. 
Taking the limit of the equations with no large terms in $\mathbb{P}$ applied to \eqref{MHDsy} yields \eqref{eq:limit:uh}, \eqref{eq:limit:u3}, 
\eqref{eq:limit:bh}. 

To determine the limit equation for the density, divide \eqref{MHDsy:r} by $a(\epM r)$, which puts it in conservation form, and apply $\az$ to obtain
\begin{equation}
  \label{eq:rforlim}
  \pt(\az r) +\az[\nch(r \uh) ]+\tfrac1\epM\az(\nch\uh)=0.
\end{equation}
To eliminate the large term in \eqref{eq:rforlim}, write the third component of \eqref{MHDsy:vb} in conservation form as
$\pt b_3+\nc(b_3\vu)-\nc(u_3\vb)+\epA^{-1}\nch\uh=0$, apply $\mu\az$ and subtract the result from 
\eqref{eq:rforlim}, which yields
\begin{equation}
  \label{eq:azmub3}
  \pt[\az( r-\mu b_3)]+\az\{\nch[(r-\mu b_3)\uh]\}+\mu\az[\nch(u_3\bh)]=0.
\end{equation}
Taking the limit of \eqref{eq:azmub3}, using facts that $\vVlim$ is independent of $z$ and that $\uhlim$ and $\bhlim$ are divergence free,
and substituting \eqref{eq:limit:b3} into the result yields \eqref{eq:limit:r}.
\end{proof}

\section{Perturbation Theorem}\label{sec:pert}

The following perturbation theorem is a variant of \cite[Lemma 3.2]{Cheng:Mach}), and can be proven by similar methods.
\begin{thm}\label{thm:pert}
    Suppose that $u$ and $U$ are solutions in $C^0([0,T];H^k)$ of
  \begin{align}
    \label{eq:witherr}
    A_0(u)u_t+\sum_{i=1}^d A_i(u)u_{x_i}&=F+L^*v+A_0(u)\pt \Xi_u+\xi_u, &\qquad &Lu=0,
    \\
    \label{eq:noerr}
    A_0(U)U_t+\sum_{i=1}^d A_i(U)U_{x_i}&=F+L^*V+A_0(U)\pt \Xi_U+\xi_U,&\qquad &LU=0,
  \end{align}
  having the same initial value $u_0\in H^{k}$, where $k\ge \lfloor \frac d2\rfloor+2$,
  the matrices $A_i$ are smooth and symmetric and $A_0$ is
  positive-definite, $F$ is a given function of $t$ and $x$, $L$ is a first-order differential operator with constant
  coefficients, with $L^*$ denoting its $L^2$-adjoint,
  and $\Xi_u$, $\Xi_U$, $\xi_u$, and $\xi_U$ satisfy
  \begin{align*}
    &\|\Xi_u\|_{{k-r}}+\|\Xi_U\|_{{k-r}}+\|\xi_u\|_{{k-r-1}}+\|\xi_U\|_{{k-r-1}}\le c\delta
    \quad\text{for some $0\le r\le k-1$,}
    \\
    &L\Xi_u=0=L\Xi_U, \qquad \text{and}\qquad
    \|\pt\Xi_U\|_{{k-r-1}}\le c.
  \end{align*}
  Then $\max_{0\le t\le T}\|u-U\|_{{k-r-1}}\le c\delta$.
\end{thm}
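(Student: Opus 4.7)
The plan is to absorb the troublesome $\pt\Xi$ forcings via the change of variables $\widetilde u\eqdef u-\Xi_u$ and $\widetilde U\eqdef U-\Xi_U$, so that $w\eqdef\widetilde u-\widetilde U$ differs from $u-U$ only by $-(\Xi_u-\Xi_U)$, whose $H^{k-r-1}$-norm is already $O(\delta)$ by the hypothesis $\|\Xi_u\|_{k-r}+\|\Xi_U\|_{k-r}\le c\delta$. Substituting $u=\widetilde u+\Xi_u$ into \eqref{eq:witherr} converts $A_0(u)u_t$ into $A_0(u)\widetilde u_t+A_0(u)\pt\Xi_u$, so the $A_0(u)\pt\Xi_u$ forcing cancels exactly; what remains is a symmetric hyperbolic equation for $\widetilde u$ whose right-hand side (beyond $F+L^*v$) is $\xi_u$ together with $-\sum_{i=1}^d A_i(u)(\Xi_u)_{x_i}$, both $O(\delta)$ in $H^{k-r-1}$, and an analogous equation holds for $\widetilde U$.

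Subtracting the two modified equations yields
\begin{equation*}
A_0(u)w_t+\sum_{i=1}^d A_i(u)w_{x_i}=L^*(v-V)+[A_0(U)-A_0(u)]\widetilde U_t+\sum_{i=1}^d[A_i(U)-A_i(u)]\widetilde U_{x_i}+\xi_u-\xi_U+\mathcal{R},
\end{equation*}
where $\mathcal{R}$ collects the $O(\delta)$ remainders created by the change of variables. The hypotheses $Lu=LU=L\Xi_u=L\Xi_U=0$ give $Lw=0$, and since $L$ has constant coefficients it commutes with every $D^\alpha$, so $LD^\alpha w=0$ as well. The initial datum $w(0)=-(\Xi_u-\Xi_U)|_{t=0}$ has $H^{k-r-1}$-norm $O(\delta)$, and $\widetilde U_t=U_t-\pt\Xi_U$ is uniformly bounded in $H^{k-r-1}$ by combining the hypothesis $\|\pt\Xi_U\|_{k-r-1}\le c$ with the bound on $U_t$ extracted algebraically from \eqref{eq:noerr} using the positive definiteness of $A_0$ and the embedding $H^{k-1}\hookrightarrow W^{1,\infty}$ guaranteed by $k\ge\lfloor d/2\rfloor+2$.

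The proof is then completed by the standard symmetric hyperbolic energy estimate in $H^{k-r-1}$: apply $D^\alpha$ for $|\alpha|\le k-r-1$, pair with $A_0(u)D^\alpha w$, integrate, and use Moser-type composition and commutator bounds with coefficient norms controlled by the $H^k$ bounds on $u$ and $U$. The coefficient-difference terms contribute at most $c\|u-U\|_{k-r-1}(\|\widetilde U_t\|_{k-r-1}+\|\widetilde U\|_{k-r})\le c(\|w\|_{k-r-1}+\delta)$, the sources $\xi_u-\xi_U$ and $\mathcal{R}$ contribute $c\delta$, and Gronwall then gives $\|w\|_{k-r-1}\le c\delta$ on $[0,T]$, whence $\|u-U\|_{k-r-1}\le\|w\|_{k-r-1}+\|\Xi_u-\Xi_U\|_{k-r-1}\le c\delta$ by the triangle inequality.

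The main obstacle is the unknown Lagrange multipliers $v$ and $V$, which are not individually controlled and in typical applications (e.g. the pressure in incompressible Euler) can be of order one. This is circumvented by exploiting $Lw=0$ together with the constant-coefficient nature of $L$: in every higher-order energy identity,
\begin{equation*}
\langle D^\alpha L^*(v-V),\,D^\alpha w\rangle=\langle D^\alpha(v-V),\,LD^\alpha w\rangle=\langle D^\alpha(v-V),\,D^\alpha Lw\rangle=0,
\end{equation*}
so the $L^*(v-V)$ contribution drops out entirely and never needs to be estimated.
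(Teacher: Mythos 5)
Your plan has the right architecture and is almost certainly the intended route via \cite[Lemma 3.2]{Cheng:Mach}: the change of variables $\widetilde u=u-\Xi_u$, $\widetilde U=U-\Xi_U$ cancels the $A_0\pt\Xi$ forcings exactly, the identity $Lw=0$ (constant coefficients plus $Lu=LU=L\Xi_u=L\Xi_U=0$) annihilates the $L^*(v-V)$ contribution to the energy identity, and Gronwall finishes. The gap is in your control of the coefficient-difference term $[A_0(U)-A_0(u)]\widetilde U_t$. You assert that $\widetilde U_t$ is uniformly bounded by combining $\|\pt\Xi_U\|_{k-r-1}\le c$ with ``the bound on $U_t$ extracted algebraically from \eqref{eq:noerr}.'' That extraction does not go through: \eqref{eq:noerr} contains the Lagrange multiplier term $L^*V$, which is not controlled by any stated hypothesis, so inverting $A_0(U)$ yields no bound on $U_t$. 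Consequently no bound on $\widetilde U_t$ follows from the hypotheses, and your estimate of $[A_0(U)-A_0(u)]\widetilde U_t$ has no foundation. (In the paper's two actual applications $\Xi_U\equiv 0$ and $U$ solves a limit system in which the pressure-type quantity $V$ is recoverable by elliptic regularity, so $U_t$ is in fact bounded in $H^{k-1}$; but that is problem-specific input not supplied by the abstract hypotheses, so an abstract proof cannot lean on it without saying so.)

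A second, more technical, flaw is the bound $c\|u-U\|_{k-r-1}(\|\widetilde U_t\|_{k-r-1}+\|\widetilde U\|_{k-r})$ itself: it tacitly invokes the algebra property of $H^{k-r-1}$ (both factors in the same low norm), which requires $k-r-1>d/2$, whereas the theorem allows $r$ up to $k-1$. The source of the difficulty is that passing to $\widetilde U=U-\Xi_U$ degrades the factor you need in the strong Sobolev norm from $U\in H^k$ down to $\widetilde U\in H^{k-r}$. The cure is to split back inside the coefficient-difference terms: $\widetilde U_{x_i}=U_{x_i}-(\Xi_U)_{x_i}$, with $\|[A_i(U)-A_i(u)]U_{x_i}\|_{k-r-1}\le c\|u-U\|_{k-r-1}\|U\|_k$ using the undegraded $U\in H^k$ as the high-index factor, and $\|[A_i(U)-A_i(u)](\Xi_U)_{x_i}\|_{k-r-1}\le c\|A_i(U)-A_i(u)\|_{k-1}\|\Xi_U\|_{k-r}\le c\delta$ using the smallness of $\Xi_U$ rather than that of the coefficient difference. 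The analogous split of $\widetilde U_t=U_t-\pt\Xi_U$ gives $[A_0(U)-A_0(u)]\pt\Xi_U$ (controllable from $\|\pt\Xi_U\|_{k-r-1}\le c$ only when $k-r-1>d/2$) and $[A_0(U)-A_0(u)]U_t$, which brings you back to the unresolved need for a bound on $U_t$.
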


\section{Calculus Inequalities for vertical averages} \label{sec:azprod}

The following result is sharper than what would be obtained by the standard product estimate (e.g., \cite[Proposition 2.1A]{Ma84},
   because the entire product is estimated using the
  $W^{1,1}$ norm rather than pulling out one factor in the $L^\infty$ norm, and  the Gagliardo-Nirenberg inequalities are
  used in dimension two rather than three.

\begin{lem}\label{lem:azfgHk}
  For all $j\ge1$ there exists a constant $C_j$ such that for $f,g\in H^j(\mT^3)$
  \begin{equation}
    \label{eq:azfgnew}
    \|\az(fg)\|_{H^{j-1}(\mT^2)}\le C_j\left( \|f\|_{H^{j}(\mT^3)}\|g\|_{L^2(\mT^3)}+\|f\|_{L^2(\mT^3)}\|g\|_{H^j(\mT^3)}\right).
  \end{equation}
\end{lem}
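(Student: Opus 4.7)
The plan is to exploit the fact that $\az(fg)$ lives on $\mT^2$ by using the critical two-dimensional Sobolev embedding $W^{1,1}(\mT^2)\hookrightarrow L^2(\mT^2)$, which gains half a derivative over the naive 3D Cauchy--Schwarz estimate, and then to finish with Leibniz and Gagliardo--Nirenberg interpolation.

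First I would reduce to $L^2(\mT^2)$--bounds on horizontal derivatives. For any multi-index $\alpha=(\alpha_1,\alpha_2,0)$ with $|\alpha|\le j-1$, horizontal derivatives commute with $\az$, so $D^\alpha \az(fg)=\az D^\alpha(fg)$. The key estimate I would invoke is
\[
  \|\az F\|_{L^2(\mT^2)}\le C\bigl(\|\az F\|_{L^1(\mT^2)}+\|\nabh \az F\|_{L^1(\mT^2)}\bigr)
  \le C\bigl(\|F\|_{L^1(\mT^3)}+\|\nabh F\|_{L^1(\mT^3)}\bigr),
\]
where the first step is the critical 2D Sobolev/Gagliardo--Nirenberg inequality on the torus (apply Poincar\'e--Wirtinger plus the $L^1$ control of the mean), and the second uses Jensen's inequality and $\nabh\az=\az\nabh$.

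Second, I would apply this with $F=D^\alpha(fg)$. Expanding via Leibniz,
\[
  D^\alpha(fg)=\sum_{\beta\le\alpha}\binom{\alpha}{\beta}(D^\beta f)(D^{\alpha-\beta}g),
\]
and bounding each summand in $L^1(\mT^3)$ by Cauchy--Schwarz, and likewise for its horizontal gradient (producing one extra horizontal derivative on $f$ or $g$), yields
\[
  \|D^\alpha \az(fg)\|_{L^2(\mT^2)}\le C\!\!\sum_{a+b\le j}\!\|f\|_{H^a(\mT^3)}\|g\|_{H^b(\mT^3)}.
\]
Note the crucial point: since the embedding only asks for \emph{horizontal} derivatives of $F$ in $L^1$, each Leibniz term picks up at most one additional derivative beyond $|\alpha|\le j-1$, so the total $a+b$ is bounded by $j$ rather than $j+1$.

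Finally, I would close with standard Gagliardo--Nirenberg interpolation. WLOG $a+b=j$ (otherwise embed $H^a\hookrightarrow H^{a'}$ for $a'\ge a$); then $\|f\|_{H^a}\le C\|f\|_{H^j}^{a/j}\|f\|_{L^2}^{1-a/j}$ and similarly for $g$, so writing $\theta=a/j$,
\[
  \|f\|_{H^a}\|g\|_{H^b}\le C\bigl(\|f\|_{H^j}\|g\|_{L^2}\bigr)^{\theta}\bigl(\|f\|_{L^2}\|g\|_{H^j}\bigr)^{1-\theta}\le C\bigl(\|f\|_{H^j}\|g\|_{L^2}+\|f\|_{L^2}\|g\|_{H^j}\bigr)
\]
by Young's inequality. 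Summing over $\alpha$ with $|\alpha|\le j-1$ yields \eqref{eq:azfgnew}. The only delicate step is the 2D critical embedding invoked above, but this is a classical estimate on the torus.
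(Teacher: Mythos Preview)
Your proof is correct and follows essentially the same route as the paper: the key ingredient is the critical two-dimensional embedding $W^{1,1}(\mT^2)\hookrightarrow L^2(\mT^2)$ applied to $\az(fg)$ (or to its horizontal derivatives), combined with Leibniz, Cauchy--Schwarz in $L^1(\mT^3)$, and then Sobolev interpolation plus Young's inequality to reduce all mixed terms $\|f\|_{H^a}\|g\|_{H^b}$ with $a+b\le j$ to the two endpoint terms. The only organizational difference is that the paper first isolates the case $j=1$ and then applies it to each Leibniz summand $(D^\beta f)(D^\gamma g)$, whereas you apply the embedding directly at the top level to $D^\alpha(fg)$ and then expand; the content is identical.
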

\begin{proof}
We first prove \eqref{eq:azfgnew} for $j=1$: By the Gagliardo-Nirenberg inequality $\|h\|_{L^2(\mT^2)}\le c \|h\|_{W^{1,1}(\mT^2)}$
%, the  standard estimate for integrals $|\int h|\le \int|h|$,
and  the Cauchy-Schwartz inequality,
\begin{equation*}
 % \begin{aligned}
  %&
  {\|\az(fg)\|_{L^2(\mT^2)}}\le c\|\az(fg)\|_{W^{1,1}(\mT^2)}
    %\\& {= c\left( \|\az(fg)\|_{L^1(\mT^2)}+\|\az(f\nabla_{x,y} g)\|_{L^1(\mT^2)}+\|\az(g\nabla_{x,y} f)\|_{L^1(\mT^2)}\right)}
    %\\&\le c\left( \|fg\|_{L^1(\mT^3)}+\|f\nabla g\|_{L^1(\mT^3)}+\|g\nabla f\|_{L^1(\mT^3)}\right)
    %\\&\le c\left( \|f\|_{L^2(\mT^3)}\|g\|_{L^2(\mT^3)} + +\|f\|_{L^2(\mT^3)}\|\nabla g\|_{L^2(\mT^3)}+\|g\|_{L^2(\mT^3)}\|\nabla f\|_{L^2(\mT^3)}\right)
    %\\&
    \le c\!\left( \|f\|_{H^1(\mT^3)}\|g\|_{L^2(\mT^3)}+\|f\|_{L^2(\mT^3)}\|g\|_{H^1(\mT^3)}\right).
%  \end{aligned}
\end{equation*}
Now let $j$ be any integer greater than one. By  the definition of the $H^{j-1}$ norm, the result for the case $j=1$, the Sobolev interpolation
inequality, and Young's inequality for products $a^\sigma b^{1-\sigma}\le a+b$ for $0\le\sigma\le1$, 
\begin{align}\label{eq:forcasek}
    \|&\az(fg)\|_{H^{j-1}(\mT^2)}\le c\sum_{|\alpha|\le j-1}\|\az(D^\alpha_{x,y}(fg))\|_{L^2(\mT^2)}
  \\\notag&
           \le c\sum_{|\beta|+|\gamma|\le j-1}\|\az((D^\beta_{x,y}f)(D^\gamma_{x,y}g))\|_{L^2(\mT^2)}
  \\\notag&
            \le c \sum_{|\beta|+|\gamma|\le j-1}\left(\|f\|_{H^{|\beta|+1}(\mT^3)}\|g\|_{H^{|\gamma|}(\mT^3)}+\|f\|_{H^{|\beta|}(\mT^3)}\|g\|_{H^{|\gamma|+1}(\mT^3)}\right)
%  \\\notag&
% \le c \sum_{|\beta|+|\gamma|\le j-1} \Big( \|f\|_{H^{|\beta|+|\gamma|+1}(\mT^3)}^{\frac{|\beta|}{|\beta|+|\gamma|}}\|f\|_{H^1(\mT^3)}^{1-\frac{|\beta|}{|\beta|+|\gamma|}}
%  \|g\|_{H^{|\beta|+|\gamma|}(\mT^3)}^{\frac{|\gamma|}{|\beta|+|\gamma|}}\|g\|_{L^2(\mT^3)}^{1-\frac{|\gamma|}{|\beta|+|\gamma|}}
%  \\\notag&\phantom{c \sum_{|\beta|+|\gamma|\le j-1} \Big(}\qquad
%  +\|f\|_{H^{|\beta|+|\gamma|}(\mT^3)}^{\frac{|\beta|}{|\beta|+|\gamma|}}\|f\|_{L^2(\mT^3)}^{1-\frac{|\beta|}{|\beta|+|\gamma|}}
%  \|g\|_{H^{|\beta|+|\gamma|+1}(\mT^3)}^{\frac{|\gamma|}{|\beta|+|\gamma|}}\|g\|_{H^1(\mT^3)}^{1-\frac{|\gamma|}{|\beta|+|\gamma|}}\Big)
%\\\notag&= c\! \sum_{|\beta|+|\gamma|\le j-1}\! \Bigg( \left(\tfrac{\|f\|_{H^{|\beta|+|\gamma|+1}(\mT^3)}}{\|f\|_{H^1(\mT^3)}}\right)^{\!\frac{|\beta|}{|\beta|+|\gamma|}}\|f\|_{H^1(\mT^3)}
%  \left(\tfrac{\|g\|_{H^{|\beta|+|\gamma|}(\mT^3)}}{\|g\|_{L^2(\mT^3)}}\right)^{\!\frac{|\gamma|}{|\beta|+|\gamma|}}\|g\|_{L^2(\mT^3)}
%  \\\notag&\phantom{c \sum_{|\beta|+|\gamma|\le j-1}}
%  +\left(\tfrac{\|f\|_{H^{|\beta|+|\gamma|}(\mT^3)}}{\|f\|_{L^2(\mT^3)}}\right)^{\!\frac{|\beta|}{|\beta|+|\gamma|}}\|f\|_{L^2(\mT^3)}
%  \left(\tfrac{\|g\|_{H^{|\beta|+|\gamma|+1}(\mT^3)}}{\|g\|_{H^1(\mT^3)}}\right)^{\!\frac{|\gamma|}{|\beta|+|\gamma|}}\|g\|_{H^1(\mT^3)}\Bigg)
  \\\notag&\le c\sum_{0\le i\le j-1}\left( \left[ \|f\|_{H^{i+1}}\|g\|_{L^2}+\|g\|_{H^{i}}\|\|f\|_{H^1}\right]+
  \left[ \|f\|_{H^i}\|g\|_{H^1}+\|g\|_{H^{i+1}}\|f\|_{L^2} \right]\right)
  \\\notag&\le c \left( \|f\|_{H^j}\|g\|_{L^2}+\|g\|_{H^{j-1}}\|f\|_{H^1}+\|f\|_{H^{j-1}}\|g\|_{H^1}+\|g\|_{H^j}\|f\|_{L^2}\right)
\end{align}
A second application of the Sobolev interpolation inequality followed by Young's inequality shows that each of the terms in the final line of
\eqref{eq:forcasek} in which the $H^{j-1}$ and $H^1$ norms appear is bounded by the sum of the two terms there in which the $H^j$ and $L^2$ norms
appear, which yields \eqref{eq:azfgnew} for $j>1$.
\end{proof}

\begin{cor}\label{cor:azp}
  Consider integer $n\ge3$ and a geometric sequence $\{\ep_j\}$ with common ratio $\frac 1\mu\ge 1$ and $\ep_n\le c$. 
Suppose $v(x,y,z),w(x,y,z)\in {H^n(\mT^3)}$ satisfy the ``interpolative estimates''
\begin{equation*}
\big\|(v,w)\big\|_{H^j(\mT^3)}\le
c\,\ep_j,\quad j=0, \ldots, n-1,\qquad \big\|(v,w)\big\|_{H^n(\mT^3)}\le c.
\end{equation*}
Then
\begin{equation}\label{eq:azvwest}
\|\az(vw)\|_{H^{n-1}(\mT^2_{x,y})} \le c \ep_0,\qquad \|\az(vw)\|_{H^{n-2}(\mT^2_{x,y})} \le c \ep_0\,\mu.
\end{equation}
\end{cor}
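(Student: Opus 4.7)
\textbf{Proof proposal for Corollary~\ref{cor:azp}.} The plan is to deduce both estimates directly from Lemma~\ref{lem:azfgHk} by applying it at two different values of $j$ and then inserting the extreme-endpoint interpolation bounds furnished by the hypothesis. For the first estimate I apply Lemma~\ref{lem:azfgHk} with $j=n$, yielding
\begin{equation*}
\|\az(vw)\|_{H^{n-1}(\mT^2)}\le C_n\big(\|v\|_{H^n(\mT^3)}\|w\|_{L^2(\mT^3)}+\|v\|_{L^2(\mT^3)}\|w\|_{H^n(\mT^3)}\big).
\end{equation*}
The hypotheses give $\|v\|_{H^n}+\|w\|_{H^n}\le c$ (top endpoint) and $\|v\|_{L^2}+\|w\|_{L^2}\le c\,\ep_0$ (bottom endpoint), so each term on the right is at most $c\,\ep_0$, which is the first estimate.

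For the second estimate I apply Lemma~\ref{lem:azfgHk} with $j=n-1$, which gives
\begin{equation*}
\|\az(vw)\|_{H^{n-2}(\mT^2)}\le C_{n-1}\big(\|v\|_{H^{n-1}(\mT^3)}\|w\|_{L^2(\mT^3)}+\|v\|_{L^2(\mT^3)}\|w\|_{H^{n-1}(\mT^3)}\big)\le c\,\ep_0\,\ep_{n-1}.
\end{equation*}
Because $\{\ep_j\}$ is geometric with common ratio $\tfrac1\mu$ and $\ep_n\le c$, we have $\ep_{n-1}=\mu\,\ep_n\le c\,\mu$; substituting this yields the desired bound $c\,\ep_0\,\mu$.

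There is essentially no analytical obstacle here: the corollary is a direct packaging of Lemma~\ref{lem:azfgHk}. The only conceptual point to flag is that the improvement by a factor of $\mu$ in the second estimate arises purely from dropping one derivative of smoothness, which the geometric-sequence assumption converts into a factor $\ep_{n-1}/\ep_n=\mu$. The standard three-dimensional product estimate \eqref{eq:sobprod} would produce $\|fg\|_{H^{n-2}}\le c\|f\|_{H^{n-1}}\|g\|_{H^{n-2}}\le c\,\ep_{n-1}\ep_{n-2}$, which is much weaker than $c\,\ep_0\mu$; this is exactly why invoking the sharpened two-dimensional averaged inequality of Lemma~\ref{lem:azfgHk}, with one factor in $L^2$, is essential.
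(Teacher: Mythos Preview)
Your proof is correct and follows essentially the same approach as the paper: apply Lemma~\ref{lem:azfgHk} at $j=n$ and $j=n-1$, then insert the endpoint bounds $\|v\|_{H^n}\le c$, $\|v\|_{L^2}\le c\,\ep_0$, and use $\ep_{n-1}=\mu\,\ep_n\le c\,\mu$. The concluding remark contrasting with the standard three-dimensional product estimate is a helpful gloss but not needed for the argument.
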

\begin{proof}
  By Lemma~\ref{lem:azfgHk}, $\|\az(vw)\|_{j-1}\le c(\|v\|_j\|w\|0+\|v\|_0\|v\|_j)\le c \ep_j\ep_0$. Since $\ep_n\le c$ and $\ep_{n-1}\le \mu\ep_n\le
  c\mu$, this implies \eqref{eq:azvwest}.
\end{proof}

In view of the uniform $H^n$ estimate \eqref{eq:uniform}, the static estimates \eqref{eq:fastest}, \eqref{eq:inteststat}, and the
relations~\eqref{eq:epAepMgen}, \eqref{eq:nudef} between the parameters, Corollary~\ref{cor:azp} yields the following estimates for products of components of various modes.
\begin{cor} \label{cor:azproduct} Assume that the basic conditions of Theorem~\ref{thm:theom1} hold. Let $v^F,w^F$ be either $(1-\az) b_3^F$ or any
  component of $\vV^F$ except $b_3^F$ and let $v^I,w^I$ be any component of $\vV^I$. Then,
  \begin{equation*}
    \begin{aligned}
      \sup_{0\le t\le T}\big\{\|&\az(v^Fw^F)\|_{n-1}+\mu\|\az(v^Iv^F)\|_{n-1}+\mu^2\|\az(v^Iw^I)\|_{n-1}
      %\\&
      +\|\az\nc( v^I\uh^F)\|_{n-2}+\epM\|v^F\|_{n-1}\big\}\le c\,\epA,
    \end{aligned}
  \end{equation*}
and the estimates also hold when $\mu$ or $\mu^2$ on the left side is replaced by $\epM$.
\end{cor}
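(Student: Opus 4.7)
The plan is to apply Corollary~\ref{cor:azp} to suitably rescaled pairs of functions. Using \eqref{eq:fastest} and \eqref{eq:uniform}, each admissible $v^F$ satisfies $\|v^F\|_j\le c\,\ep_j$ for $j=0,\ldots,n-1$ and $\|v^F\|_n\le c$; using \eqref{eq:inteststat} and \eqref{eq:uniform}, the rescaled $\mu v^I$ satisfies $\|\mu v^I\|_j\le c\,\ep_j$ for $j=0,\ldots,n-1$ and $\|\mu v^I\|_n\le c\mu\le c$. Thus Corollary~\ref{cor:azp} applies directly to the pairs $(v^F,w^F)$, $(\mu v^I,v^F)$, and $(\mu v^I,\mu w^I)$, yielding the first three claimed bounds with right-hand side $c\,\ep_0=c\,\epA$. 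The term $\epM\|v^F\|_{n-1}$ is handled by the static estimate $\|v^F\|_{n-1}\le c\,\ep_{n-1}$ together with the scaling inequality $\epM\,\ep_{n-1}\le c\,\epA$, which is equivalent to $(n-2)\nu\le 1$ and is implied by \eqref{eq:epAepMgen}. The same constraint gives $\nu\le\tfrac{1}{n-1}<1$, so $\mu=\epM^\nu\ge\epM$ and $\mu^2=\epM^{2\nu}\ge\epM$; replacing $\mu$ or $\mu^2$ by the smaller $\epM$ therefore only shrinks the left side, establishing the final clause of the corollary.

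The main obstacle is the divergence term $\|\az\nc(v^I\uh^F)\|_{n-2}$, because the routine reduction $\|\nch X\|_{n-2}\le c\|X\|_{n-1}$ combined with Corollary~\ref{cor:azp} applied to $(\mu v^I,(1-\az)\uh^F)$ yields only the too-weak bound $c\,\epA/\mu$. I would instead expand, using that $\uh^F$ is horizontal and the product rule,
\begin{equation*}
\az\nc(v^I\uh^F)=\az(v^I\nch\uh^F)+\az(\uh^F\cdot\nabh v^I),
\end{equation*}
and estimate the two summands separately. For the first, Lemma~\ref{lem:azfgHk} with $j=n-1$ applied to $(\mu v^I,\nch\uh^F)$, both of which satisfy the interpolative estimate at levels $0$ and $n-1$, gives $\mu\|\az(v^I\nch\uh^F)\|_{n-2}\le c\,\ep_0\ep_{n-1}\le c\,\epA\mu$, whence $\|\az(v^I\nch\uh^F)\|_{n-2}\le c\,\epA$.

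For the second summand I would exploit the cancellation $\az v^I=0$ by writing $v^I=\pz w$ with $w\eqdef\pz^{-1}v^I$ the zero-mean antiderivative in $z$, which satisfies $\|w\|_j\le c\|v^I\|_j\le c\,\ep_{j+1}$. Integration by parts in $z$ using $\az\pz=0$ then yields
\begin{equation*}
\az(\uh^F\cdot\nabh v^I)=\az(\uh^F\cdot\pz\nabh w)=-\az(\pz\uh^F\cdot\nabh w).
\end{equation*}
Lemma~\ref{lem:azfgHk} with $j=n-1$ applied to $(\pz\uh^F,\nabh w)$ then produces a bound by $c\bigl(\|\pz\uh^F\|_{n-1}\|\nabh w\|_0+\|\pz\uh^F\|_0\|\nabh w\|_{n-1}\bigr)\le c(\ep_{n-1}\ep_2+\ep_0)$, and a short arithmetic check using $\ep_j=\epM^{1+\nu-j\nu}$ shows $\ep_{n-1}\ep_2\le\epA$ precisely under the constraint $(n-1)\nu\le 1$, which is the tight form of \eqref{eq:epAepMgen}. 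This is the only step in the proof where the full strength of \eqref{eq:epAepMgen} is consumed; elsewhere only the strictly weaker $(n-2)\nu\le 1$ is used.
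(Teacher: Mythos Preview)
Your treatment of the three product terms, the $\epM\|v^F\|_{n-1}$ term, and the final clause is correct and matches exactly what the paper intends: all of these follow directly from Corollary~\ref{cor:azp} applied to the pairs $(v^F,w^F)$, $(\mu v^I,v^F)$, $(\mu v^I,\mu w^I)$, together with $\ep_{n-1}=\mu\ep_n\le c\mu$ and $\ep_n\le c$. One minor slip: the inequality $\epM\ep_{n-1}\le c\,\epA$ follows from $\epM\ep_{n-1}=\epM\mu\ep_n\le c\,\epM\mu=c\,\epA$, which is the constraint $\ep_n\le c$ itself, i.e.\ essentially $(n-1)\nu\le 1$, not $(n-2)\nu\le 1$ as you wrote.

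For the divergence term your integration-by-parts idea is natural, but the final arithmetic check is wrong, and the error is not harmless. You claim $\ep_{n-1}\ep_2\le\epA$ holds under $(n-1)\nu\le 1$. Compute: $\ep_{n-1}\ep_2=\epM^{1-(n-2)\nu}\cdot\epM^{1-\nu}=\epM^{2-(n-1)\nu}$, and $\epA=\epM^{1+\nu}$, so $\ep_{n-1}\ep_2\le\epA$ is equivalent to $2-(n-1)\nu\ge 1+\nu$, i.e.\ $n\nu\le 1$. The hypothesis \eqref{eq:epAepMgen} gives only $(n-1)\nu\lesssim 1$ (equivalently $\ep_n\le c$), under which your bound becomes $\ep_{n-1}\ep_2=\epM\,\ep_n\le c\,\epM$, not $c\,\epA$. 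So as written, your argument yields only $\|\az\nc(v^I\uh^F)\|_{n-2}\le c\,\epM$, a factor $\mu^{-1}$ short of the stated $c\,\epA$. (The same shortfall appears if one skips the integration by parts and estimates $\az(\nabh v^I\cdot\uh^F)$ directly via Lemma~\ref{lem:azfgHk}; it is the term $\|\nabh v^I\|_0\,\|\uh^F\|_{n-1}\le c\,\ep_2\ep_{n-1}$ that causes trouble, and neither choice of which factor absorbs the $\pz$ in the integration by parts fixes it.) Note that in every place the paper actually invokes this term---e.g.\ in bounding $\xi_{r^S-\mu\az b_3^F}$ and $\widetilde\xi_{u_3^S}$---only the bound $c\,\epM$ is used; so the gap does not propagate, but your proof does not establish the corollary as stated.
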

The  estimate of $\epM\|v^F\|_{n-1}$ does not use Lemma~\ref{lem:azfgHk} but is included for convenience.

\begin{ack}
Ju is supported by the NSFC (Grants No.11571046, 11471028,
      11671225). Cheng and Ju are supported by the UK Royal Society ``International Exchanges'' scheme (Award No. IE150886).  Ju and Schochet are
      supported by the ISF-NSFC joint research program (NSFC Grant No. 11761141008 and ISF Grant No. 2519/17). Cheng is supported by the Leverhulme
      Trust (Award No. RPG-2017-098) and the EPSRC (Grant No. EP/R029628/1).
\end{ack}  

%\bibliographystyle{alpha}
%\bibliography{refs}

\begin{thebibliography}{10}

\bibitem{Cheng:SIMA:2012}
{\sc B.~Cheng}, {\em Singular limits and convergence rates of compressible
  {E}uler and rotating shallow water equations}, SIAM J. Math. Anal., 44
  (2012), pp.~1050--1076, \url{https://doi.org/10.1137/11085147X}.

\bibitem{Cheng:Mach}
{\sc B.~Cheng}, {\em Improved accuracy of incompressible approximation of
  compressible {E}uler equations}, SIAM J. Math. Anal., 46 (2014),
  pp.~3838--3864, \url{https://doi.org/10.1137/140955173}.

\bibitem{CJS17}
{\sc B.~Cheng, Q.~Ju, and S.~Schochet}, {\em Three-scale singular limits of
  evolutionary {PDE}s}, Arch. Ration. Mech. Anal., 229 (2018), pp.~601--625,
  \url{https://doi.org/10.1007/s00205-018-1233-5}.

\bibitem{ChMa:zonal}
{\sc B.~Cheng and A.~Mahalov}, {\em Euler equation on a fast rotating
  sphere---time-averages and zonal flows}, Eur. J. Mech. B Fluids, 37 (2013),
  pp.~48--58, \url{https://doi.org/10.1016/j.euromechflu.2012.06.001}.

\bibitem{Davidson}
{\sc P.~A. Davidson}, {\em An introduction to magnetohydrodynamics}, Cambridge
  Texts in Applied Mathematics, Cambridge University Press, Cambridge, 2001,
  \url{https://doi.org/10.1017/CBO9780511626333}.

\bibitem{MR3723323}
J.~F\"{o}ldes, S.~Friedlander, N.~Glatt-Holtz, and G.~Richards.
\newblock Asymptotic analysis for randomly forced {MHD}.
\newblock {\em SIAM J. Math. Anal.}, 49(6):4440--4469, 2017,
\url{https://doi.org/10.1137/16M1071857}.
  
\bibitem{MR678094}
{\sc T.~Kato}, {\em A short introduction to perturbation theory for linear
  operators}, Springer-Verlag, New York-Berlin, 1982,
  \url{https://doi.org/10.1007/978-1-4612-5700-4}.

\bibitem{KM81}
{\sc S.~Klainerman and A.~Majda}, {\em Singular limits of quasilinear
  hyperbolic systems with large parameters and the incompressible limit of
  compressible fluids}, Comm. Pure Appl. Math., 34 (1981), pp.~481--524,
  \url{https://doi.org/10.1002/cpa.3160340405}.

\bibitem{KM82}
{\sc S.~Klainerman and A.~Majda}, {\em Compressible and incompressible fluids},
  Comm. Pure Appl. Math., 35 (1982), pp.~629--651,
  \url{https://doi.org/10.1002/cpa.3160350503}.

% \bibitem{KLN91}
% {\sc H.-O. Kreiss, J.~Lorenz, and M.~J. Naughton}, {\em Convergence of the
%   solutions of the compressible to the solutions of the incompressible
%   {N}avier-{S}tokes equations}, Adv. in Appl. Math., 12 (1991), pp.~187--214,
%   \url{https://doi.org/10.1016/0196-8858(91)90012-8}.

\bibitem{Ma84}
{\sc A.~Majda}, {\em Compressible fluid flow and systems of conservation laws
  in several space variables}, vol.~53 of Applied Mathematical Sciences,
  Springer-Verlag, New York, 1984,
  \url{https://doi.org/10.1007/978-1-4612-1116-7}.

\bibitem{metivier01:euler}
{\sc G.~M\'{e}tivier and S.~Schochet}, {\em The incompressible limit of the
  non-isentropic {E}uler equations}, Arch. Ration. Mech. Anal., 158 (2001),
  pp.~61--90, \url{https://doi.org/10.1007/PL00004241}.

\bibitem{Sch:rate}
{\sc S.~Schochet}, {\em Asymptotics for symmetric hyperbolic systems with a
  large parameter}, J. Differential Equations, 75 (1988), pp.~1--27,
  \url{https://doi.org/10.1016/0022-0396(88)90126-X}.

\bibitem{Sch:limits}
{\sc S.~Schochet}, {\em Fast singular limits of hyperbolic {PDE}s}, J.
  Differential Equations, 114 (1994), pp.~476--512,
  \url{https://doi.org/10.1006/jdeq.1994.1157}.

\end{thebibliography}

%%-----------------------------
%%      your bibliography
%%-----------------------------
\end{document}